\newtheorem*{rep@theorem}{\rep@title}
\newcommand{\newreptheorem}[2]{%
\newenvironment{rep#1}[1]{%
 \def\rep@title{#2 \ref{##1}}%
 \begin{rep@theorem}}%
 {\end{rep@theorem}}}
\theoremstyle{plain}
\newtheorem{theorem}{Theorem}
\theoremstyle{definition}
\newtheorem{definition}[theorem]{Definition}
\newtheorem*{remark}{Remark}
\newtheorem{lemma}[theorem]{Lemma}
\newtheorem{proposition}[theorem]{Proposition}
\newtheorem{corollary}[theorem]{Corollary}
\newtheorem{example}[theorem]{Example}
\newtheorem{conjecture}[theorem]{Conjecture}
\newcommand{\N}{\mathbb{N}}
\newcommand{\Z}{\mathbb{Z}}
\newcommand{\Q}{\mathbb{Q}}
\newcommand{\R}{\mathbb{R}}
\newcommand{\C}{\mathbb{C}}
\newcommand{\F}{\mathbb{F}}
\newcommand{\Aff}{\mathbb{A}}
\newcommand{\Proj}{\mathbb{P}}
\newcommand{\A}{\alpha}
\newcommand{\e}{\equiv}
\newcommand{\la}{\langle}
\newcommand{\ra}{\rangle}
\newcommand{\inverselimit}{\varprojlim_{F^{ad}}\Aff^{N,ad}_K\cup\{\infty\}}
\newcommand{\inverselimitb}{\varprojlim_{F^{\flat,ad}}\Aff^{N,ad}_{K^\flat}\cup\{\infty\}}
\newcommand{\orbit}[2]{\mathcal{O}_{#1}(#2)}
\DeclareMathOperator{\red}{red}
\DeclareMathOperator{\Per}{Per}
\DeclareMathOperator{\Spec}{Spec}
\DeclareMathOperator{\Spa}{Spa}
\DeclareMathOperator{\Gal}{Gal}
\title{Unlikely intersection problems for restricted lifts of $p$-th power}
\author{Wayne Peng}
\date{\today}
\begin{document}

\maketitle	
\begin{abstract}
I define a morphism $\phi:\Aff^n_{\C_p}\to\Aff^n_{\C_p}$ called a lift of $p$-th power if its natural restriction to the residue field of $\C_p$ is a $p$-th power of some morphism. This definition generalizes from the lift of Frobenius. In this paper the theory of perfectoid spaces is applied on the morphism to the Manin-Mumford conjecutre, the Mordell-Lang conjecture and the Tate-Volch conjecture.
\end{abstract}
\subsection*{Acknowledgement}
I have to thank Junie Xie for introducing this topic in Workshop on interactions between Model Theory and Arithmetic Dynamics in Field Institute in Toronto in 2016. My work generalizes his result in~\cite{Xie}.
\section{Introduction}
A canonical idea from algebraic geometry states that if the intersection of a curve $C$ with a countable collection of ``special” points $\{P_i\}_{i\geq 1}$ is unlikely to be large unless $C$ itself is also special in some way. Questions base on this idea are called unlikely intersection question. Ihara, Serre and Tate are pioneers on this type of problems. For algebraic dynamical systems, one can ask, for example, suppose a curve $C\subset X$ contains infinitely many periodic points under a morphism $F:X\to X$, can we say $C$ is periodic under $F$? I am going to present three questions of unlikely intersection for algebraic dynamical systems, and apply Scholze's Perfectoid theory to these questions. 

Let $\C_p$ be the complete field of the algebraic closed field of field of $p$-adic numbers $\overline{\Q_p}$. I will work on a special kind of endomorphism on $\Aff_{\C_p}^N$, called the \emph{restricted lift of $p$-th power}. The lift of $p$-th power is a kind of endomorphisms generalizing the settings of lift of Frobenius. Since the definition of such an endomorphism is slightly technical I will only define lift of $p$-th power and leave the restrict condition in later section (see Section~\ref{section:3}).

Writing $F$ coordinate-wise as $(F_1,F_2,\ldots,F_N)$, I assume that $F_i$s are polynomials with coefficients defined in $\C_p^\circ$, the valuation ring of $\C_p$. We say $F$ is a lift of $p$-th power if there exists $G\in\C_p^\circ[X_1,\ldots, X_N]$ and $\varpi\in\C_p^\circ$ with $|p|\leq|\varpi|<1$ for the value $|\cdot|$ on $\C_p$ such that
\[
F_i=(G_i)^p\mod \varpi\C_p^\circ.
\]
For example, $(X^p+Y^p+pXY,X^p-Y^p+pXY)$ is a lift of $p$-th power on $\Aff_{\C_p}^2$. Note this endomorphism of $\Aff_{\C_p}^2$ is also a restricted of $p$-th power.

The theory of perfectoid spaces by Scholze is one of celebrating breakthrough in math in 2010s. I use this theory on three questions, the dynamical Manin-Mumford conjecture, the dynamical Mordell-Lang conjecture and  the dynamical Tate-Volach conjecture. Let me state them one by one in the following with my results.
\subsection*{Manin-Mumford Conjecture}

What Manin-Mumford Conjecture tries to say is very close to the idea of unlikely intersections. The version of dynamical systems probably is easier to state than its original version. \cite{zhang1995} gives
\begin{conjecture}\label{conjecture:manin-mumford}
Let $F:X\to X$ be an endomorphism of a quasi-projective variety defined over $K$. Let $V$ be a subvariety of $X$. If the Zariski closure of the set of preperiodic (resp. periodic) points of $F$ contained in $V$ is Zariski dense in $V$ , then $V$ itself is preperiodic (resp. periodic).
\end{conjecture}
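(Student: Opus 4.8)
The plan is to bring Conjecture~\ref{conjecture:manin-mumford} into the framework of Section~\ref{section:3} --- $X=\Aff^N_{\C_p}$, $F$ a restricted lift of $p$-th power, $V\subset\Aff^N_{\C_p}$, and a Zariski-dense set of (pre)periodic points of $F$ in $V$ --- and to settle that model case by tilting the attached perfectoid tower to characteristic $p$, where such an $F$ degenerates to a power of the geometric Frobenius. This is the $p$-adic, perfectoid analogue of the reductions modulo $p$ used by Pink--R\"ossler and by Hrushovski for the classical Manin--Mumford conjecture; tilting is the device that lets the characteristic-$p$ conclusion descend back to characteristic $0$. It suffices to treat the \emph{periodic} case: if $V$ carries a Zariski-dense set of preperiodic points then the increasing chain of closed subsets $V_j:=\overline{F^{-j}(\Per(F))\cap V}$ has dense union, hence equals $V$ for some $j_0$ by Noetherianity, so $\overline{F^{j_0}(V)}$ carries a Zariski-dense set of periodic points, and the periodic case applied to $\overline{F^{j_0}(V)}$ gives that it, and therefore $V$, is preperiodic. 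So I assume from now on that the dense set $S\subset V$ consists of periodic points of $F$.

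\textbf{Step 1 (reduction to the model case).} Spread $X$, $V$, $F$ out over a finitely generated $\Z$-subalgebra $R\subset K$ and look for a prime $p$ and a model over the completion at a place above $p$ at which the reduction of $S$ stays Zariski-dense in that of $V$ and at which $F$ lifts to a polynomial map congruent, modulo some $\varpi$ with $|p|\le|\varpi|<1$, to the $p$-th power of a polynomial map $G$ --- the normal form of Section~\ref{section:3}. Base-changing to $\C_p$, embedding $X$ into $\Proj^M_{\C_p}$ and passing to an affine formal model $\Aff^N_{\C_p^\circ}$ dominating the closure $\overline X$, and replacing $F$ by a fixed iterate (which has the same periodic points and periodic subvarieties), reaches the model case, with the image of $S$ a Zariski-dense set of periodic points in the image of $V$. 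Finally, periodicity of the image of $V$ over $\C_p$ yields periodicity of $V$ over $K$ by faithful flatness together with a specialization argument tying the chosen fibre to the generic one.

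\textbf{Step 2 (the model case via tilting).} Form the perfectoid tower $\inverselimit$ attached to $F$ and its tilt $\inverselimitb$: it is perfectoid because the congruence $F\equiv G^p$ modulo $\varpi$ makes the transition map behave there like a $p$-th power, and the restriction hypothesis guarantees that on the tilted side --- over the characteristic-$p$ perfectoid field $\C_p^\flat$ --- the transition map $F^\flat$ is, as far as periodic subvarieties are concerned, a power of the geometric Frobenius of $\C_p^\flat$. Each periodic point in $S$ has a canonical periodic lift $\widehat P$ in the tower; under the homeomorphism between a perfectoid space and its tilt these go to periodic points of $\widehat{F^\flat}$, which must then have coordinates in $\overline{\F_p}$, so the tilt $\widetilde V^\flat$ of $\widetilde V:=\overline{\{\widehat P:P\in S\}}$ carries a Zariski-dense set of $\overline{\F_p}$-points. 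By the standard fact that such a subvariety of $\Aff^N_{\C_p^\flat}$ is defined over a finite field --- hence stable under a power of Frobenius --- $\widetilde V^\flat$, and therefore $\widetilde V$, is $\widehat F$-periodic. Projecting down via the continuous, equivariant map $\pi_0$ (and using $\overline{F(\overline Z)}=\overline{F(Z)}$) shows that the image of $V$ in $\Aff^N_{\C_p}$ is $F$-periodic; Step~1 then returns periodicity of $V\subset X$, and the reduction at the outset upgrades this to the preperiodic statement in general.

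\textbf{Main obstacle.} The real difficulty is Step 1: a general endomorphism of a general quasi-projective variety does not --- even after spreading out, choosing a good prime, base-changing to $\C_p$, passing to an affine formal model, and iterating --- become a \emph{restricted} lift of $p$-th power; the $p$-th-power normal form modulo $\varpi$ is a severe constraint that cannot be forced in general, and controlling the passage between the correspondence on $\overline X$ and an honest polynomial lift on $\Aff^N$, together with checking that Zariski closures persist along the perfectoid tower of Step 2, is where essentially all the work lies. The ``restricted'' hypothesis of Section~\ref{section:3} is exactly what isolates this point, and the results below carry out the program unconditionally in the situations where $F$ already has that shape; obtaining Zhang's conjecture for arbitrary endomorphisms of arbitrary quasi-projective varieties along this route is, for now, out of reach.
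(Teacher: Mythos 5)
The statement you are asked about is a \emph{conjecture} (Zhang's dynamical Manin--Mumford conjecture), and the paper offers no proof of it --- on the contrary, the introduction explicitly cites published counterexamples (Ghioca--Tucker--Zhang, Pazuki, Yuan--Zhang), so the conjecture is false as stated and no proof can succeed. Your own ``Main obstacle'' paragraph effectively concedes that Step~1 cannot be carried out, which means the proposal is a proof of nothing beyond the special case; but the situation is worse than ``out of reach'': the counterexamples show that the reduction you hope for in Step~1 \emph{must} fail for some endomorphisms, not merely that we do not yet know how to perform it. What the paper actually proves is Theorem~\ref{the dynamical Manin-Mumford conjecture}, the model case where $F$ is a restricted lift of $p$-th power on $\Aff^N_{\C_p}$ \emph{and} $\bar F^l$ is a finite surjective self-morphism of $\bar V$.

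Even restricted to that model case, your Step~2 diverges from the paper's argument at a point where it has a genuine gap. You argue: periodic points on the tilt have coordinates in $\overline{\F_p}$, hence $\widetilde V^\flat$ has a dense set of $\overline{\F_p}$-points, hence is defined over a finite field, hence is stable under a power of Frobenius, hence is $F^\flat$-periodic. The last implication does not follow: the tilted map $F^\flat$ is given coordinatewise by $X_i\mapsto\sum_j X_i^{q_j}$, a sum of $p$-power monomials, not a power of the geometric Frobenius, and a variety stable under Frobenius need not be stable under such a map. The paper avoids this by \emph{assuming} $\bar F^l(\bar V)=\bar V$ as a hypothesis of Theorem~\ref{the dynamical Manin-Mumford conjecture}, invoking Amerik's theorem (Theorem~\ref{thm:Amerik}) to get density of $\bar F^l$-periodic points in $\bar V$, and then running Xie's lemma together with Scholze's approximation lemma to show that the full orbit set $\mathcal{O}=\bigcup_{x\in\Per(F)\cap V}\orbit{F^l}{x}$ satisfies $|H_j|=0$ on $\mathcal{O}$, i.e.\ $\mathcal{O}\subseteq V$, whence $F^l(V)=V$. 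You would need to either adopt that hypothesis or supply a genuinely new argument for why $\widetilde V^\flat$ is $F^\flat$-invariant; neither is present in the proposal.
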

It basically says if a variety intersects with the periodic points of $F$ infinitely many times, then the variety is also periodic under $F$. 

The original version of Manin-Mumford conjecutre was proved by Rayanaud (see~\cites{Raynaud1983,Raynaud1983incollection}) and many others (see~\cites{Ullmo1998,Zhang1998,Buium1996,Hrushovski2001}, and \cite{PinkRoessler2004}). Baker and Hsia first proved that the dynamical Manin-Mumford conjecture is true for any line on an affine space of dimension $2$. Then, many counterexamples were given (see \cites{pazuki2010, pazuki2013, GTZ2011}, and \cite{YuanZhang2017}). However, \cite{GTZ2011} also shows the conjecture is hold for some cases, and after that many cases have been proved to be valid (see~\cites{Fakhruddin2014, GNY2018, GNY2019, GhiocaTucker2010}, and \cite{DujardinFavre2017}). Now, we believe the dynamical Manin-Mumford conjecture is only failed for some special families.

The following is my result on the dynamical Manin-Mumford conjecture. 
\begin{theorem}\label{the dynamical Manin-Mumford conjecture}
Let $V$ be a variety of $\Aff_K^N$ and $F$ is a restricted lift of $p$th power, and assume that $\bar{F}^l$ is a finite surjective morphism of $red(V)\coloneqq \bar{V}$ for some integer $l\geq 1$, i.e. $\bar{F}^l:\bar{V} \to \bar{V}$. Then, if $V\cap \Per(F)$ is dense in $V$, then $V\cap\Per(F^l)=V\cap\Per(F)$ is periodic.
\end{theorem}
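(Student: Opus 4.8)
\medskip
\noindent\emph{Proof strategy.} The plan is to push the statement through Scholze's tilting equivalence: the perfectoid tower built from a lift of $p$-th power $F$ tilts to the analogous tower in characteristic $p$, where $F$ becomes a twist of the Frobenius and periodicity of subvarieties is governed by descent to a finite field; one then untilts. After base-changing $V$ and $F$ along $K\hookrightarrow\C_p$ --- which affects neither $\bar V=\red(V)$ and $\bar F$, nor $\Per(F)$ and $\Per(F^l)$, nor Zariski density, and from which periodicity descends since $F$ is defined over $K$ --- I may assume $V\subset\Aff^N_{\C_p}$. By Section~\ref{section:3}, $F$ extends to $\Aff^{N,ad}_K\cup\{\infty\}$, the inverse limit $\mathcal A_\infty\coloneqq\inverselimit$ along $F$ is perfectoid, and its tilt is $\mathcal A_\infty^\flat=\inverselimitb$, the tower built from the tilted map $F^\flat$ --- again a lift of $p$-th power with reduction $\bar F$, over the characteristic-$p$ perfectoid field $K^\flat$; tilting intertwines the shift $F_\infty$ on $\mathcal A_\infty$ with the shift $F^\flat_\infty$ on $\mathcal A_\infty^\flat$.

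Next I transfer the dynamical hypothesis. A point with $F^n(P)=P$ lifts uniquely to the $n$-periodic point $\widehat P=(P,F^{n-1}(P),F^{n-2}(P),\dots)\in\mathcal A_\infty$, and the $0$-th projection $\pi_0$ sends $\Per(F_\infty)$ into $\Per(F)$; so with $V_\infty\coloneqq\pi_0^{-1}(V^{ad})$, density of $V\cap\Per(F)$ in $V$ yields density of $V_\infty\cap\Per(F_\infty)$ in $V_\infty$, and after tilting, density of $V_\infty^\flat\cap\Per(F^\flat_\infty)$ in the tilted closed set $V_\infty^\flat$. The crucial point is that $V_\infty^\flat$ is again the tower-preimage $(\pi_0^\flat)^{-1}((V^\flat)^{ad})$ of an honest subvariety $V^\flat\subset\Aff^N_{K^\flat}$ with $\red(V^\flat)=\bar V$; granting this, $V^\flat\cap\Per(F^\flat)$ is Zariski dense in $V^\flat$.

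Now the characteristic-$p$ input. Over $K^\flat$ a lift of $p$-th power agrees modulo $\varpi^\flat$ with a twist of the absolute Frobenius, so iterating $F^\flat\equiv(G^\flat)^p\bmod\varpi^\flat$ shows that reduction carries $\Per(F^\flat)\cap V^\flat$ into the $\bar F$-periodic $\overline{\F_p}$-points of $\bar V$, and --- since $\bar F^l$ is finite surjective on $\bar V$ --- that each such point lifts, by a Hensel/Teichm\"uller argument for Frobenius lifts, to exactly one $F^\flat$-periodic point in the perfectoid fiber. Hence reduction identifies $V^\flat\cap\Per(F^\flat)$ with the $\bar F$-periodic $\overline{\F_p}$-points of $\bar V$; their Zariski density in $V^\flat$ forces $V^\flat$ to be defined over $\overline{\F_p}\subset K^\flat$. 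As $\bar F^l$ is surjective, $\bar F^l(\bar V)=\bar V$, and since a subvariety of $\Aff^N$ over $\overline{\F_p}$ in characteristic $p$ coincides with its own reduction, $(F^\flat)^l(V^\flat)=V^\flat$. Untilting gives $F^l(V)=V$, so $V$ is periodic under $F^l$; the equality $V\cap\Per(F^l)=V\cap\Per(F)$ is then automatic (it holds for any map).

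The main obstacle is the crucial point of the second step. Tilting is given only as a homeomorphism of adic spaces, not an equivalence of algebraic data, so it is not formal that the tilt of the tower-fiber over the transcendental variety $V$ is itself the tower-fiber over an algebraic $V^\flat$, or that reductions match. Controlling this should use that the tower is pro-(finite surjective) on the special fiber --- precisely where the hypothesis on $\bar F^l$ enters --- together with the \emph{restricted} condition (Section~\ref{section:3}) to bound ramification along the tower, so that the reduction functor and the tilting correspondence become compatible with Zariski closure. Making this precise amounts to a Manin--Mumford statement already at the level of $\bar V$, carried through the perfectoid tower, and is the technical heart of the argument.
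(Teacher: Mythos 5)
The gap you flag in your own write-up is real, and it is precisely what the paper avoids by taking a different route. You try to produce an honest subvariety $V^\flat\subset\Aff^N_{K^\flat}$ whose tower-preimage tilts to $V_\infty^\flat$, so that the characteristic-$p$ argument can be run algebraically on $V^\flat$ before untilting. That step is not just technical: tilting is a homeomorphism of adic spaces, not an equivalence of schemes, and there is no reason for the tilt of $(\pi_0)^{-1}(V^{ad})$ to be the tower-preimage of an algebraic $V^\flat$. The paper never asserts this. Instead it works at the level of defining equations: it fixes $H_j$ with $\|H_j\|=1$, applies Scholze's approximation lemma to get $g_c\in R^{\flat,\mathrm{perf}}$ with $|H_j-g_c^\sharp(v)|\leq|\varpi|^{1/2}\max\{|H_j(v)|,|\varpi|^c\}$, and pushes the resulting smallness estimate at periodic points through the tilt, rather than pushing the variety through the tilt.

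The second thing the paper does differently is which characteristic-$p$ variety it leans on. You want a tilted $V^\flat$ over $K^\flat$; the paper only ever uses the \emph{reduction} $\bar V$ over the residue field $k$, embedded into $\Aff^N_{K^\flat}$ via $\eta$. That is an honest algebraic variety for free, and it is exactly where Amerik's theorem (finite surjective $\Rightarrow$ periodic points Zariski dense) applies to give Xie's lemma: smallness $|h|\leq|t|^s$ at tilted periodic points in $\tau^\flat(\eta(\bar V))$ propagates to all of $\tau^\flat(\eta(\bar V))$. The paper then defines $\mathcal O=\bigcup_{x\in\Per(F)\cap V}\orbit{F^l}{x}$, a dense $F^l$-invariant set, and uses the approximation lemma again in the other direction to conclude $|H_j(s)|\leq|\varpi|^c$ for all $s\in\mathcal O$ and all $c$, hence $\mathcal O\subseteq V$ and $V$ is periodic. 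Your Hensel/Teichm\"uller bijection between periodic points is morally the paper's attracting-basin lemma on $\red^{-1}(y)$, so that piece of your intuition is sound, but the missing middle of your argument (existence of $V^\flat$ and matching of Zariski closures under tilting) would need a different mechanism entirely; the paper's mechanism is the approximation lemma plus Xie's lemma applied to $\bar V$, not to a tilt of $V$.
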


\subsection*{Mordell-Lang Conjecture}
The idea of Mordell-Lang conjecture is this. Let $G$ be a semiabelian variety over $\C$ and $V$ is a subvariety of $G$. Taking an element $g$ in $G$, then $\{n\in\Z\mid ng\in V\}$ is a union of finite many arithmetic progressions, i.e.
\[
\{n\in\N\mid ng\in V\}=\cup_{i=1}^k\{a_ik+b_i\mid k\in \Z\}
\]
for some integer $a_i$ and $b_i$. A more general statement is proved by Faltings (see~\cite{Faltings}) and Vojta (see~\cite{Vojta}). A dynamical version is inspired by Ghioca and Tucker (see~\cite{GhiocaTucker2009}). Many different versions of the dynamical Mordell-Lang conjecture have also been given (see~\cite{BellGhiocaTucker2010}, \cite{B-G-K-T-2012}, \cite{SilvermanBianca2013} and \cite{Xie2017}). Let me state the dynamical Mordell-Lang conjecture.
\begin{conjecture}
Let $X$ be a qusi-projective variety and $V$ be a subvariety of $X$. Let $f:X\to X$ be an endomorphism. For any $x\in X$, the set $\{n\mid f^n(x)\in V\}$ is a union of finitely many arithmetic progressions. 
\end{conjecture}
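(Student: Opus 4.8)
The conjecture as stated is open in general, so what follows is a strategy rather than a complete argument. The plan is to pursue the $\mf{p}$-adic analytic method that underlies every case known to date and to isolate the one step that collapses in full generality. \textbf{Step 1, spreading out.} Since $X$, $V$, $f$ and $x$ are described by finitely many equations, they live over a finitely generated domain; after choosing an embedding and clearing denominators I may assume $X$, $V$, $f$ are defined over a ring of $S$-integers $\OK$ in a number field $K$, that $x\in X(\OK)$, and that $f$ extends to an endomorphism of a model $\mathcal X\to\Spec\OK$. A finite union of arithmetic progressions is detected one progression at a time, so after fixing an integer $r\geq 1$ and a residue $a$ it suffices to prove that the set $\{\,n\in\N : f^{rn}\bigl(f^a(x)\bigr)\in V\,\}$ is either finite or all of $\N$.

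\textbf{Step 2, the arc lemma; Step 3, Strassmann.} Choose a prime $\mf{p}\nmid S$ of good reduction, with residue characteristic $p$, and work inside $X(\C_p)$ with the $\mf{p}$-adic topology. The structural input one wants is an \emph{arc lemma}: after replacing $f$ by an iterate $f^r$ and $x$ by a point of its forward orbit, a $\mf{p}$-adic polydisc $U\ni x$ with $f^r(U)\subseteq U$ on which $f^r$ is given by convergent power series, such that $n\mapsto f^{rn}(x)$ extends to a $\mf{p}$-adic analytic map $\theta:\Z_p\to X(\C_p)$. Granting $\theta$, the set $\{\,n\in\N:\theta(n)\in V\,\}$ is the common zero locus in $\N$ of the finitely many convergent power series $g\circ\theta$, with $g$ running over local equations for $V$ near the orbit closure; by $\mf{p}$-adic Weierstrass preparation each such zero locus in $\Z_p$ is finite or all of $\Z_p$, hence so is the intersection, and reversing the mod-$r$ reduction of Step 1 presents $\{\,n:f^n(x)\in V\,\}$ as a finite union of arithmetic progressions.

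\textbf{The obstacle, and how to move past it.} Everything hinges on Step 2, and this is precisely why the statement is a conjecture at this level of generality: for an arbitrary endomorphism there may be no iterate fixing a polydisc on which the orbit is analytically interpolable, and ramification of $f$ along the orbit, Jacobian eigenvalues that are not $\mf{p}$-adically small, and the orbit drifting between residue discs are all genuine obstructions; the arc lemma is currently available only under extra hypotheses (e.g. $f$ \'etale along the orbit, the Bell--Ghioca--Tucker case). My proposal for widening its reach is to equip $f$ with the structure this paper is built around: when $f$ reduces modulo $\mf{p}$ to a power of a Frobenius-type morphism --- a restricted lift of $p$-th power --- passing to the associated perfectoid space and tilting turns $f$ into an \emph{invertible} self-map, which simultaneously confines the orbit to a single tubular neighborhood and supplies the interpolation $\theta$ essentially for free; one then transports the finiteness statement back across the tilting equivalence, checking that local equations of $V$ pull back to convergent functions on the relevant neighborhood and that ``finite union of arithmetic progressions'' survives both the mod-$r$ reduction and the tilt. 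Managing integral models over the perfectoid base through that transport is where the real work lies; the Strassmann step at the end is routine.
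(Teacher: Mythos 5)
There is a fundamental mismatch here: the statement you were asked about is the dynamical Mordell--Lang conjecture, which this paper does not prove --- it is recorded as an open conjecture, and the paper's actual theorems concern different questions (periodic points for Manin--Mumford and Tate--Voloch, and \emph{coherent backward} orbits for the inverse Mordell--Lang statement), all under the restricted-lift-of-$p$-th-power hypothesis. So there is no ``paper proof'' for your proposal to match, and your submission, by your own admission, is a research plan rather than a proof. The genuine gap is exactly the one you flag as Step 2: without an analytic arc $\theta:\Z_p\to X(\C_p)$ interpolating $n\mapsto f^{rn}(x)$, the Strassmann/Weierstrass dichotomy never gets off the ground, and no construction in your write-up (or in the paper) supplies such an arc for a general endomorphism, nor even for a general restricted lift of $p$-th power.

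Your proposed fix --- pass to the perfectoid space, tilt, and claim the interpolation comes ``essentially for free'' because the map becomes invertible --- does not survive scrutiny and is not what the paper's framework delivers. What tilting gives is a homeomorphism $\rho$ intertwining $F^{perf}$ with $F^{perf,\flat}$, together with the approximation lemma for transporting the vanishing of defining equations; invertibility appears only for the shift map $T$ on the inverse limit $\varprojlim_{F^{ad}}\Aff^{N,ad}_K\cup\{\infty\}$, which is why the paper can handle periodic points and coherent backward orbits (where one has a full inverse system of preimages to feed into the limit), but a single forward orbit $\{f^n(x)\}_{n\geq 0}$ does not come with such a coherent tower and acquires no $p$-adic analytic parametrization in $n$ from the tilt. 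In other words, the tilting equivalence replaces the role of interpolation only in the backward/periodic setting; for the forward conjecture your Step 2 remains open even after adding the restricted-lift hypothesis, so the argument cannot be completed along the lines you sketch. If you want a statement in this paper that your machinery genuinely proves, it is Theorem~\ref{Inverse Mordelll-Lang conjecture} (coherent backward orbits meeting a subvariety with $\bar{F}^l(\bar{V})=\bar{V}$), not the forward dynamical Mordell--Lang conjecture as stated.
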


If we look closer to the conjecture and compare the original version with the dynamical one, we will find that the above conjecture is not complete. The $n$ in the original version can be positive and negative, so we shouldn't only consider forward orbits of a dynamical system but also backward orbits of a dynamical system. The backward orbit was asked by S.-W. Zhang and solved by Xie for lift of Frobenius. They made the following conjecture.
\begin{conjecture}\label{conjecture:inverse mordel-lang}
Let $X$ be a quasi-pojective variety over a algebraic closed field $\overline{K}$ and $F:X \to X$ be a finite endomorphism. Let $\{b_i\}_{i\geq 0}$ be a sequence of points in $X$ satisfying $F(b_i)=b_{i-1}$ for all $i\geq 1$. Let $V$ be a subvariety of $X$. Then the set ${n\geq 0\mid b_n \in V }$ is a union of finitely many arithmetic progressions.
\end{conjecture}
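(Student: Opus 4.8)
The plan is to transport the backward orbit $\{b_i\}$ into a single point of a perfectoid space attached to the tower $\cdots\xrightarrow{F}\Aff^N\xrightarrow{F}\Aff^N$, tilt to characteristic $p$, and there reduce to the elementary fact that the Frobenius twists of a fixed subvariety defined over a finite field are periodic. As a first step one covers $X$ by finitely many affine opens and uses that $F$ is finite to reduce to $X=\Aff^N$; the data $(X,F,V)$ and $b_0$ are defined over a finitely generated field $k_0$, and every $b_i$ is algebraic over $k_0$ because $F$ has finite degree, so --- since $\C_p$ is complete, algebraically closed, and of infinite transcendence degree over $\Q$ --- the relevant algebraic extension of $k_0$ embeds into $\C_p$. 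After such an embedding and a rescaling making $F$ have $\C_p^\circ$-coefficients one arrives at the setting of this paper: $X=\Aff^N_{\C_p}$, $V\subseteq\Aff^N_{\C_p}$, and --- \emph{this is the genuinely restrictive step} --- $F$ is a restricted lift of $p$-th power in the sense of Section~\ref{section:3}, with reduction $\bar F=\bar G^{(p)}\circ\Phi$, $\Phi$ the coordinatewise $p$-power Frobenius. An arbitrary finite endomorphism does not, after any base change and reduction, acquire this shape (a nontrivial automorphism never does); it is exactly the class of $F$ for which it is available --- lifts of Frobenius in Xie's work, restricted lifts of $p$-th power here --- that the perfectoid method reaches, and recognizing when a given $F$ can be so modeled is the principal obstacle to the conjecture as worded.

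\emph{Perfectoid inverse limit and tilting.} Compactify each term of the tower, pass to adic spaces, and form $\mathcal X_\infty=\inverselimit$. Because $F$ is a $p$-power map modulo the pseudo-uniformizer $\varpi$, $\mathcal X_\infty$ is perfectoid (Section~\ref{section:3}); writing $\pi_n:\mathcal X_\infty\to\Aff^N$ for the $n$-th projection one has $\pi_0=F^n\circ\pi_n$. A compatible backward orbit $\{b_i\}$ is precisely a point $b_\infty\in\mathcal X_\infty(\C_p)$ with $b_n=\pi_n(b_\infty)$, and $\mathcal Z_n:=\pi_n^{-1}(V)\cap\mathcal X_\infty\cong\varprojlim_F\bigl(\cdots\to F^{-1}(V)\to V\bigr)$ is a Zariski-closed perfectoid subspace, so $b_n\in V\iff b_\infty\in\mathcal Z_n$. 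Tilting, $\mathcal X_\infty^\flat=\inverselimitb$ over the complete algebraically closed characteristic-$p$ field $K^\flat=\C_p^\flat$, with transition map induced by $\bar F$; $b_\infty$ tilts to a compatible backward orbit $\{b_i^\flat\}$ under $\bar F$; and, by the tilting equivalence for Zariski-closed subspaces, $\mathcal Z_n$ tilts to $\pi_n^{\flat,-1}(V^\flat)\cap\mathcal X_\infty^\flat$, where $V^\flat$ is, up to perfection, $\bar V\times_{\overline{\F_p}}K^\flat$ with $\bar V=\red(V)$ the reduction of $V$ --- a subvariety \emph{defined over a finite field} $\F_q$, $q=p^r$. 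Hence $\{n:b_n\in V\}=\{n:b_n^\flat\in\bar V_{K^\flat}\}$.

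\emph{The characteristic-$p$ endgame.} Here $\bar F=\bar G^{(p)}\circ\Phi$ with $\Phi$ bijective on $K^\flat$-points ($K^\flat$ is perfect), and the role of the \emph{restricted} hypothesis is precisely to force, after replacing $F$ by a suitable iterate $F^l$ --- an operation that only refines arithmetic progressions to sub-progressions --- the twisting factor to collapse, so that $\bar F^l=\Phi^l$ is the pure $p^l$-power Frobenius; for the headline example $(X^p+Y^p+pXY,\ X^p-Y^p+pXY)$ one has $\bar F=\bar G\circ\Phi$ with $\bar G=(X+Y,X-Y)\in\mathrm{GL}_2(\F_p)$ and hence $\bar F^l=\bar G^l\circ\Phi^l=\Phi^l$ for $l$ the order of $\bar G$. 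Once $\bar F^l=\Phi^l$, bijectivity pins the backward orbit down: $b_{ln}^\flat=\Phi^{-ln}(b_0^\flat)$, so $b_{ln}\in V\iff b_0^\flat\in\Phi^{ln}(\bar V)=\bar V^{(p^{ln})}$, and since $\bar V$ is defined over $\F_q$ the subvarieties $\bar V^{(p^{ln})}$ depend only on the residue of $ln$ modulo $r$. Thus $\{n:b_{ln}\in V\}$ is a union of at most $r$ arithmetic progressions; running the same argument on each shifted backward orbit $\{b_{lm+s}\}_{m\ge0}$, $s=0,\dots,l-1$, and reassembling, $\{n:b_n\in V\}$ is a union of finitely many arithmetic progressions, as claimed.

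\emph{Main obstacle.} Granting Section~\ref{section:3}, the perfectoid-to-tilt transport and the concluding Frobenius periodicity are essentially formal; the real difficulty --- and the reason the conjecture in the stated generality is out of reach by this route --- is the reduction step: deciding when a finite endomorphism admits a model as a restricted lift of $p$-th power, and, when it does not, extending the perfectoid construction to transition maps that are not $p$-power maps modulo a pseudo-uniformizer, so that $\mathcal X_\infty$ ceases to be perfectoid and tilting is no longer available. Any proof of the full conjecture along these lines must first substantially enlarge the admissible class of $F$.
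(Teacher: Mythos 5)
The statement you are proving is stated in the paper only as a conjecture; what the paper actually establishes is Theorem~\ref{Inverse Mordelll-Lang conjecture}, a special case for restricted lifts of $p$-th power under the extra hypotheses that $V$ is irreducible, $\bar F^{l}(\bar V)=\bar V$ for some $l$, and $b_0$ is algebraic. You correctly identify the reduction of an arbitrary finite endomorphism to this class as the principal obstruction, and your overall transport (backward orbit $=$ point of $\inverselimit$, tilt, work in characteristic $p$) matches the paper's framework. But even granting the restriction, your characteristic-$p$ endgame has two genuine gaps. First, the claim that the \emph{restricted} hypothesis forces $\bar F^{l}=\Phi^{l}$ after passing to an iterate is false: ``restricted'' is a condition at infinity ($F$ preserves $\Aff^N_K\setminus\Aff^N_{K^\circ}$ and increases $\|\cdot\|$ there), not a condition on the twisting factor. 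Lemma~\ref{keylemma} allows $G_i=\sum_j c_{i,j}X_i^{q_{i,j}}$ with several terms; for $G=X+X^p$ one gets $\bar F=X^p+X^{p^2}$, and no iterate of this is a pure power of Frobenius, so the ``Frobenius twists of a variety over $\F_q$ are periodic'' argument never applies. The paper does not derive periodicity of $\bar V$ from anything; it \emph{assumes} $\bar F^{l}(\bar V)=\bar V$ as a hypothesis, and your proposal silently needs (and fails to supply) that hypothesis.

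Second, the identification $\{n: b_n\in V\}=\{n: b_n^\flat\in \bar V_{K^\flat}\}$ via ``the tilting equivalence for Zariski-closed subspaces'' is not available. The vanishing locus of the characteristic-zero polynomials $H_j$ inside $\Aff^{N,perf}_K$ does not tilt to the base change of the reduction $\bar V$ (two varieties with the same reduction would then have the same tilt); all one gets from Scholze's approximation lemma is a family of elements $g_c$ with $|H_j-g_c^\sharp(v)|\leq|\varpi|^{1/2}\max\{|H_j(v)|,|\varpi|^c\}$, so membership in $V$ is only comparable to the tilt side up to errors $|t|^c$, one $c$ at a time. Closing this gap is precisely the content of the paper's proof: it approximates $g_c$ by polynomials $h_c\in R^{\flat,n}$, uses that the Zariski closure of $\{b_{n_i}^\flat\}$ is all of $\tau^\flat\circ\eta(\bar V)$ (this is where irreducibility of $V$ enters), invokes the invariance $(F^{\flat,ad})^{l}(Z)=Z$ together with a density argument over $\Spec(K^{\flat,\circ}/(t^s))$ in the style of Lemma~\ref{Xie's lemma} to propagate the bound $|h_c(b_{t+jl})|\leq|t|^c$ to \emph{all} $j$, and only then lets $c\to\infty$ to conclude $a_{t+jl}\in V$. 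Your proposal replaces this entire mechanism with an equivalence that does not hold, so even the special case is not proved as written.
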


The condition $F(b_i)=B_{i-1}$ is called coherent condition, and we can actually made a conjecture without such condition, but there are counterexamples. For instance, consider a map $F:=(x^2,x^3):\Aff_\C^2\to\Aff_\C^2$ with $b_0=(1,1)$, and let $V$ be the diagonal of $\Aff_{\C}^2$. There are other preimages of $(1,1)$ by $F^n$ is not on the diagonal, and note that $(1,1)$ is a fixed point of $F$, so I can chose $b_{2^i}$ to be $(1,1)$ and $b_i$ to be any other preimages not on the diagonal. Then, this construction will make $\{i\mid b_i\in V\}$ not a union of infinitely many arithmetic progressions. However, this example is cheating in the sense that actually $(1,1)$ is belonging to $F^{-n}((1,1))$ for all $n$. Indeed, Theorem~\ref{finialthm} shows that if $F$ is eventually stable and $V$ is defined over the defining field of $F$, then the coherent condition is automatically satisfied. A polynomial $f$ is eventually stable, if the number of factors of $f^n$ is bounded. The eventually stable condition on an endomorphism follows the similar sense (see Definition~\ref{eventuallystable} for more details). Eventually stable doesn't exit an easy criteria. Rafe and Alon (see~\cite{RafeAlon2016}) made some conjectures about the eventual stability.

In the end, I prove a theorem similar to Xie's by generalize the endomorphism to a restricted lift of $p$-th power.
\begin{theorem}\label{Inverse Mordelll-Lang conjecture}
Let $\{a_n\}$ be a coherent backward orbit of $a_0$ by a restricted lift of $p$th power $F$. Assume that there exists a subsequence $\{a_{n_i}\}$ contained in an irreducible subveriety $V$ with $\bar{F}^{l}(\bar{V})=\bar{V}$ for some integer $l$. We further assume that $b_0$ is algebraic. Then, we have $\{n_i\}$ is arithmetic progressive.
\end{theorem}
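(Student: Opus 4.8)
\noindent\emph{Proof strategy.} The plan is to realise the coherent backward orbit as a single point of a perfectoid space, tilt to characteristic $p$ where the reduction $\bar F$ is driven by Frobenius, and then play the rigidity of Frobenius on the special fibre against arithmetic control of the orbit on the generic fibre.

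First I would form the perfectoid inverse limit $X_\infty\coloneqq\inverselimit$ along $F^{\mathrm{ad}}$; that this is an honest perfectoid space, and not merely a pro-object, is exactly what the \emph{restricted} hypothesis on $F$ is designed to guarantee (Section~\ref{section:3}), since modulo $\varpi$ the transition maps are the coordinatewise $p$-th powers $(\bar G_1^{\,p},\dots,\bar G_N^{\,p})$. On $X_\infty$ the endomorphism $F$ lifts to the shift automorphism $\tilde F$, characterised by $\pi_0\circ\tilde F=F\circ\pi_0$ and $\pi_n\circ\tilde F=\pi_{n-1}$ for $n\ge 1$, so that $\pi_n=\pi_0\circ\tilde F^{-n}$. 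A coherent backward orbit $\{a_n\}$ of $a_0$ is precisely one point $a_\infty=(a_0,a_1,\dots)\in X_\infty$, lying over $\overline{\Q_p}$ because $a_0$, and hence the whole orbit, is algebraic, and the assumption $a_{n_i}\in V$ reads $\tilde F^{-n_i}(a_\infty)\in\pi_0^{-1}(V^{\mathrm{ad}})$. So the theorem reduces to showing that
\[
S\coloneqq\{\,n\ge 0 : \tilde F^{-n}(a_\infty)\in\pi_0^{-1}(V^{\mathrm{ad}})\,\}=\{\,n\ge 0 : a_n\in V\,\}
\]
is, away from finitely many terms, a single arithmetic progression whose common difference divides $l$.

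For the special fibre I would note two things. First, since $F_i\equiv G_i^{\,p}\pmod\varpi$ the reduction is $\bar F=(\bar G_1^{\,p},\dots,\bar G_N^{\,p})=\mathrm{Frob}_p\circ\bar G$, so every iterate of $\bar F$ factors through a power of the bijective, purely inseparable Frobenius of $\Aff^N_{\overline{\F_p}}$; this is the origin of all the rigidity I intend to use, and it is faithfully recorded in the tilt $X_\infty^\flat=\inverselimitb$ via Scholze's equivalence. Second, the reduction $\bar a_n$ of $a_n$ satisfies $\bar F(\bar a_n)=\bar a_{n-1}$, so from $\bar F^l(\bar V)=\bar V$ one gets immediately that $\bar S\coloneqq\{n : \bar a_n\in\bar V\}$ is stable under $n\mapsto n-l$; since $\bar S\supseteq S$ is infinite, $\bar S$ must contain a full arithmetic progression of common difference $l$ and, off a finite set, be a union of such progressions, so its gaps are eventually at most $l$.

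Finally I would pin $S$ down inside $\bar S$, and this is where the algebraicity of $a_0$ enters. As $F$ has degree $\ge p$ in each coordinate, the backward orbit has bounded Weil height (the height decays like $p^{-n}$), and as $F$ is a restricted lift of $p$-th power it has good reduction at $p$ with the orbit in the good-reduction locus; this should be enough to $p$-adically interpolate the backward orbit, in the spirit of Bell-Ghioca-Tucker but carried out on the perfectoid tilt, and conclude that $S$ is itself a finite union of arithmetic progressions together with a finite set. Granting this, and assuming (as one may after replacing $V$) that the $a_n$, $n\in S$, accumulate Zariski-densely on the irreducible $V$, any common difference $m$ that occurs infinitely often in $S$ forces $F^{m}(V)\subseteq V$; the set of such $m$ is a numerical semigroup, so for large $n$ the set $S$ becomes a union of progressions of a single common difference $d$ with $F^{d}(V)\subseteq V$, hence $\bar F^{d}(\bar V)\subseteq\bar V$. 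Comparing this with $\bar F^l(\bar V)=\bar V$ (and taking $l$ minimal with that property) forces $d\mid l$, and running the same comparison on differences of distinct residues modulo $l$ leaves a single residue class; hence $S$ is eventually $\{n_0+dk:k\ge 0\}$ with $d\mid l$, which gives the claim for $\{n_i\}$.

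The step I expect to be the real obstacle is the interpolation-plus-reconciliation in the last paragraph: $p$-adic interpolation on the generic fibre only constrains the common differences appearing in $S$ to be powers of $p$, whereas the special fibre delivers the modulus $l$, so forcing the two to match---and collapsing the finite union to one progression---is precisely where one must exploit that $X_\infty$ is perfectoid, i.e. that the generic and special fibres are welded together by tilting, rather than a mere $p$-adic analytic tower. A secondary annoyance is that $\bar G$ need not be injective, so $(\bar F^l)^{-1}(\bar V)$ may properly contain $\bar V$; this is what keeps $\bar S$, and therefore $S$, from being a clean single progression from the start, and is the reason for the qualifier ``eventually'' and for the passage to the Zariski closure of the orbit.
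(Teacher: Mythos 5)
Your special-fibre analysis is essentially correct (that $\bar S$ is closed under $n\mapsto n-l$, hence eventually fills a residue class mod $l$), but the step you single out as ``the real obstacle'' is indeed where the proposal falls apart, and the paper takes a completely different route through it that your proposal does not touch.

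You propose to close the gap by height bounds and a Bell--Ghioca--Tucker--style $p$-adic interpolation of the backward orbit, followed by a numerical-semigroup argument on common differences. Neither is carried out, and neither is what the paper does. The paper's key tool is Scholze's approximation lemma: for each defining polynomial $H_j$ of $V$ (normalized to Gauss norm $1$) and each $c>0$, it produces $g_c\in R^{\flat,perf}$ with $|H_j - g_c^\sharp(v)|\le|\varpi|^{1/2}\max\{|H_j(v)|,|\varpi|^c\}$, then approximates $g_c$ by a polynomial $h_c\in R^{\flat,n}$. Since $a_{n_i}\in V$ forces $|H_j(a_{n_i})|=0$, one gets $|h_c(b_{n_i})|\le|t|^c$, and the crucial input is a \emph{Zariski-density} argument on the tilt: because $V$ is irreducible and the $\{b_{n_i}\}$ are infinite and lie in $\tau^\flat\circ\eta(\bar V)$ which satisfies $(F^{\flat,ad})^l(\tau^\flat\eta(\bar V))=\tau^\flat\eta(\bar V)$, the reduction of $\{b_{n_i}\}$ mod $t^s$ is dense in the reduction of $\bar V\times_{\Spec k}\Spec(K^{\flat,\circ}/(t^s))$, whence the bound $|h_c(b_{t+jl})|\le|t|^c$ propagates to the \emph{entire} arithmetic progression $\{t+jl\}$, not just the given indices. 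Untilting gives $|H_j(a_{t+jl})|\le|\varpi|^c$ for all $c$, hence $a_{t+jl}\in V$, and the progression is exhibited directly. Your interpolation route would have to re-derive this density transfer from scratch, and you correctly worry that $p$-adic interpolation alone only sees powers of $p$ rather than the modulus $l$; the paper's density-plus-approximation argument sidesteps this mismatch entirely. Concretely, the missing idea in your proposal is the use of the approximation lemma together with Zariski density of the reduced orbit in $\bar V$ (ultimately resting on Theorem~\ref{thm:Amerik} via Lemma~\ref{Xie's lemma}), and without it the proposal does not constitute a proof.
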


\subsection*{Tate-Volch Conjecture}
Tate-Volch conjecture is a relative new question in algebraic geometry by \cite{TateVoloch1996}.
\begin{conjecture}\label{conjecture: tate-voloch}
Let $A$ be a semiabelian variety over $K_v$, the complete field with respect to a valuation $v$ on $K$, and $V$ a subvariety of $A$. Then there exists $c > 0$ such that for any torsion point $x \in A$, we have either $x\in V$ or $d(x,V)>c$.
\end{conjecture}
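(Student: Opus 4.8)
The plan is to deduce the conjecture, in the case relevant to this paper where $v$ has residue characteristic $p$, from the dynamics of the multiplication-by-$p$ isogeny $[p]\colon A\to A$; the archimedean and residue-characteristic-$0$ cases lie outside the reach of these methods. After replacing $K_v$ by a finite extension and passing to semistable reduction, $A$ has a model $\mathcal A$ over the valuation ring of $K_v$ with semiabelian special fiber $\mathcal A_s$, and $[p]$ extends to $\mathcal A$. Because the relative Frobenius of $\mathcal A_s$ raises coordinates to the $p$-th power and the residue field is perfect, $\overline{[p]}$ is, on every affine chart, the $p$-th power of a morphism; thus $[p]$ is a lift of $p$-th power, and I would check that it meets the extra hypotheses of Section~\ref{section:3}, making it a \emph{restricted} lift of $p$-th power. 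The elementary but crucial observation is that the preperiodic points of $[p]$ are \emph{exactly} the torsion points of $A$: $[p]^{n+m}(x)=[p]^{n}(x)$ forces $[p^{m}-1](x)\in A[p^{n}]$, hence $x\in A_{\mathrm{tors}}$, and conversely any torsion point becomes periodic after finitely many applications of $[p]$. So Conjecture~\ref{conjecture: tate-voloch} is a Tate--Voloch statement for the preperiodic points of the restricted lift of $p$-th power $[p]$, placed (after covering $\mathcal A$ by finitely many affine charts, or working with the adic space) inside the framework of this paper.

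Next I would pass to the perfectoid tower and its tilt. Form $\mathcal A^{\diamond}=\varprojlim_{[p]}\mathcal A^{ad}$ --- the semiabelian analogue of $\inverselimit$ --- with its ``total Frobenius'' automorphism $\varphi$ (the shift on compatible sequences), and the tilt $\mathcal A^{\diamond\flat}$ over $K_v^\flat$, on which the induced dynamics $\tilde\varphi$ is the honest Frobenius of a characteristic-$p$ formal scheme. A point preperiodic under $[p]$ lifts, far enough up the tower, to a point of $\mathcal A^{\diamond}$ that is \emph{periodic} under $\varphi$, and therefore tilts to a $\tilde\varphi$-periodic point of $\mathcal A^{\diamond\flat}$. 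The tilting correspondence is a homeomorphism compatible with specialization, and --- using the restricted-lift hypothesis exactly as in the proofs of Theorems~\ref{the dynamical Manin-Mumford conjecture} and~\ref{Inverse Mordelll-Lang conjecture} --- the $v$-adic distance from $x$ to $V$ is comparable, with constants depending only on $(A,V)$, to the characteristic-$p$ distance from the tilted periodic point to the avatar $V^\flat$ of $V$. So it suffices to produce $c^\flat>0$ such that every $\tilde\varphi$-periodic point within $c^\flat$ of $V^\flat$ already lies on $V^\flat$.

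For the characteristic-$p$ statement I would first reduce to the case where $V^\flat$ is $\tilde\varphi$-invariant: let $W\subseteq V^\flat$ be the Zariski closure of the (tilted) torsion points lying in it; by the dynamical Manin--Mumford theorem (Theorem~\ref{the dynamical Manin-Mumford conjecture}) applied to $\mathcal A_s$ and $\overline{[p]}$, $W$ is periodic, so after replacing $V^\flat$ by the finitely many periodic pieces of $W$ and treating the lower-dimensional complementary strata of $\bar V$ separately one may assume $V^\flat$ is $\tilde\varphi$-invariant. The heart of the matter is that Frobenius strictly contracts $p$-adic distance on each residue polydisc: if $\tilde\varphi_i\equiv t_i^{\,p}\pmod{\mathfrak m}$ then for $\tilde z,\tilde w$ in a common residue ball $d(\tilde\varphi^{m}\tilde z,\tilde\varphi^{m}\tilde w)\le q^{m}\,d(\tilde z,\tilde w)$ with $q=q\bigl(d(\tilde z,\tilde w)\bigr)<1$, because $(a-b)^{p}$ has absolute value $|a-b|^{p}<|a-b|$ in characteristic $p$ and the error terms carry a uniformizer. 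Hence if $\tilde x$ is $\tilde\varphi$-periodic of period $m$ and $\tilde y\in V^\flat$ realizes $\delta:=d(\tilde x,V^\flat)<1$, then $\tilde\varphi^{m}\tilde y\in V^\flat$ and $\delta\le d(\tilde x,\tilde\varphi^{m}\tilde y)=d(\tilde\varphi^{m}\tilde x,\tilde\varphi^{m}\tilde y)\le q^{m}\delta$, forcing $\delta=0$. So any constant $<1$ works on the invariant part; amalgamating with the finitely many, by Noetherianity, lower-dimensional strata gives a uniform $c^\flat>0$, and untilting and projecting down the tower returns the desired $c>0$ for $A$ and $V$.

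The main obstacle I anticipate is the metric comparison under tilting. Tilting is a homeomorphism of adic spaces, but ``distance from a point to a subvariety'' is not a topological invariant, so one has to carry the integral and formal models through the whole construction and make precise how specialization distance is rescaled --- this is exactly why the restriction condition of Section~\ref{section:3} is imposed, and verifying that $[p]$ on a semistable model satisfies it (most delicately in the toric directions and when $A$ is not ordinary, where $\overline{[p]}$ is inseparable of higher degree) is where the real work sits. The secondary obstacle is the reduction to an invariant $V$: torsion points need not lie on any $[p]$-invariant subvariety of $V$, so decomposing $V\cap A_{\mathrm{tors}}$ into finitely many pieces at distinct distance scales, checking each satisfies the hypotheses of Theorem~\ref{the dynamical Manin-Mumford conjecture}, and merging the resulting separation constants into a single $c$ is the delicate endgame.
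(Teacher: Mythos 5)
You are trying to prove something the paper itself does not prove: Conjecture~\ref{conjecture: tate-voloch} is quoted from Tate--Voloch purely as background, and the paper offers no proof of it. What the paper actually establishes is Theorem~\ref{dynamical tate-voluch conjecture}, a separation statement for the \emph{periodic points} of a restricted lift of $p$-th power acting on $\Aff^N_{\C_p}$, proved by applying Scholze's approximation lemma to the defining polynomials $H_j$ of $V$ and then using Noetherianity of $k[X_1^{(n)},\ldots,X_N^{(n)}]$ to extract a uniform bound $\varepsilon=|\varpi|^M$ (Lemma~\ref{thelemma}). So there is no ``paper proof'' of the semiabelian statement for your argument to be measured against, and your proposal should be judged as a new derivation; as such it has several genuine gaps.

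First, the framework of Section~\ref{section:3} is specifically about polynomial self-maps of affine space: the restricted condition requires $F$ to preserve $\Aff_K^N\setminus\Aff_{K^\circ}^N$ and strictly increase the sup-norm there, and all of $\tau$, $\varphi$, $\rho$ are built from the Tate algebra $K\la X_1,\ldots,X_N\ra$. The isogeny $[p]$ on a semiabelian variety is not such a map, and affine charts of a semistable model are not $[p]$-stable, so ``covering $\mathcal A$ by finitely many affine charts'' does not place $[p]$ inside this machinery; this is not a technical verification you postponed but a structural mismatch. Second, torsion points are the \emph{preperiodic} points of $[p]$, while every result in the paper (and the bijection $\Per(F)\to\Per(\bar F)$, the map $\chi$, etc.) concerns periodic points only; the $p$-power torsion is strictly preperiodic, and your claim that a preperiodic point ``lifts, far enough up the tower, to a point periodic under the shift'' is false --- a point of $\varprojlim_{F^{ad}}\Aff^{N,ad}_K$ is an entire coherent backward orbit, and it is shift-periodic precisely when the base point is periodic and the chosen backward orbit is the cycle itself, which never happens for strictly preperiodic points. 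Third, the step you yourself flag as the main obstacle, the metric comparison under tilting, is exactly what the theory does not give: the tilting correspondence is a homeomorphism, not a distance comparison, and the paper deliberately avoids any such comparison by pushing the estimate through the approximation lemma applied to $H_j$ rather than transporting a metric. Finally, your reduction to a $\tilde\varphi$-invariant $V^\flat$ invokes Theorem~\ref{the dynamical Manin-Mumford conjecture}, whose hypothesis that $\bar F^l$ be a finite surjective self-map of $\bar V$ is an assumption, not something automatic for the closure of torsion points, and your contraction argument additionally needs the infimum $d(\tilde x,V^\flat)$ to be attained with a uniform contraction factor. As it stands the proposal does not yield Conjecture~\ref{conjecture: tate-voloch}, nor does it recover the paper's Theorem~\ref{dynamical tate-voluch conjecture}.
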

The $d(x,V)$ is the ``distance" of the point $x$ to $V$. To avoid any complicate definition, the $d(x,V)$ is the infinite norm of $V(x)$, evaluating $x$ by plugging to the defining equations of $V$.
\cite{Scanlon1999} showed the conjecture is true over any finite extension of $\Q_p$, and \cite{Buium1996onTateVoloch} proved the conjecture is true for periodic points of any lift of Frobenius endomorphism $f:V\to V$. I prove
\begin{theorem}\label{dynamical tate-voluch conjecture}
Let $F$ be a restricted lift of $p$th power over $\mathbb{\C}_p$. Then, for any variety $V$, there exists an $\varepsilon>0$ such that, for all $x\in \Per(F)$, either $x\in V$ or $d(x,V)>\varepsilon$.
\end{theorem}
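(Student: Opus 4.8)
The plan is to reduce to the closed unit polydisk, pass through the perfectoid tilting correspondence attached to a restricted lift of $p$-th power (Section~\ref{section:3}) to land in characteristic $p$, exploit that there the relevant self-map strictly contracts the non-archimedean distance under iteration, and extract the uniform constant from quasi-compactness of the perfectoid space. \emph{Reduction.} First I would record that $\Per(F)\subseteq\Aff^N(\C_p^\circ)$: since $F_i\equiv G_i^p\pmod{\varpi}$, the map $F$ is expanding at infinity (for $\|x\|>1$ one has $\|F(x)\|\ge\|x\|^{p}$ after the usual normalization), so an orbit through a non-integral point escapes to infinity and cannot be periodic. Hence it suffices to bound $d(x,V)$ from below for $x$ in the closed unit polydisk $Y:=\Spa(\C_p\la X_1,\dots,X_N\ra,\C_p^\circ\la X_1,\dots,X_N\ra)$, and I may replace $V$ by its intersection with $Y$ and by its $\C_p^\circ$-model.

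\emph{Tilting.} By Section~\ref{section:3} the inverse limit $Y_\infty:=\varprojlim_F Y$ is perfectoid over $\C_p$, with tilt $Y_\infty^\flat\cong\varprojlim_{F^\flat}Y^\flat$ over $\C_p^\flat$; here $Y^\flat$ is the closed unit polydisk over $\C_p^\flat$ and, by the definition of a lift of $p$-th power, $F^\flat=\mathrm{Frob}\circ G^\flat$ is the coordinatewise $p$-power map followed by a polynomial map $G^\flat$ with coefficients in $(\C_p^\flat)^\circ$. A periodic point $x$ of $F$ of exact period $n$ lifts to $\tilde x=(x,F^{n-1}(x),F^{n-2}(x),\dots)\in Y_\infty$, which tilts to a periodic point of the shift $F^\flat_\infty$ on $Y_\infty^\flat$. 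Since tilting is a homeomorphism of the underlying adic spaces preserving the value monoid, and $\red(V)=\bar V$ coincides with the reduction mod $\varpi^\flat$ of the corresponding closed subset $V^\flat\subseteq Y^\flat$, the claim becomes: there is $\varepsilon>0$ with every $F^\flat$-periodic $z\in Y^\flat$ either in $V^\flat$ or with $d(z,V^\flat)>\varepsilon$.

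\emph{The characteristic-$p$ dichotomy and uniformity.} On $Y^\flat$ one has $d(\mathrm{Frob}(a),\mathrm{Frob}(b))=d(a,b)^{p}$ and $G^\flat$ is $1$-Lipschitz, so $F^\flat$ is non-expanding and $d((F^\flat)^{m}(a),(F^\flat)^{m}(b))\le d(a,b)^{p^{m}}$. If $z$ is $F^\flat$-periodic of period $n$ with $0<d(z,V^\flat)=\delta<1$, then applying $(F^\flat)^{mn}$ (which fixes $z$) places $z$ within $\delta^{p^{mn}}$ of $(F^\flat)^{mn}(V^\flat)$, while $z\in V^\flat$ would force $z\in(F^\flat)^{mn}(V^\flat)$; letting $m\to\infty$ shows that the whole backward orbit $\tilde z^\flat$ is infinitely close, inside $Y_\infty^\flat$, to the closed $F^\flat_\infty$-stable subset generated by the pullback of $V^\flat$. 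As $Y_\infty^\flat$ is spectral and that subset is closed, $\tilde z^\flat$ lies in it, and unwinding the equivalence gives $z\in V^\flat$. A final quasi-compactness argument then upgrades this pointwise dichotomy to a uniform $\varepsilon$: a hypothetical sequence of periodic $z_j\notin V^\flat$ with $d(z_j,V^\flat)\to0$ would, after passing to a convergent subnet of the lifts in the quasi-compact space $Y_\infty^\flat$, produce a limit backward orbit lying over $\bar V$, and the dichotomy applied near that limit would force the $z_j$ onto $V^\flat$ for $j\gg0$, a contradiction; the theorem then follows by the reduction of the Tilting step.

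\emph{Where the difficulty lies.} I expect the real work to be concentrated in the two middle steps. First, the tilting correspondence must be made quantitative: tilting preserves the topology and the value monoid but not addition, so one has to verify, through the $\varpi$-adic integral models, that a lower bound $d(z,V^\flat)>\varepsilon$ over $\C_p^\flat$ genuinely transfers back to a lower bound $d(x,V)>\varepsilon'$ over $\C_p$. Second, the dynamical images $(F^\flat)^{m}(V^\flat)$ have degrees growing like $(\deg F^\flat)^{m}$, so one cannot appeal to Chow- or Hilbert-scheme compactness to take their limit; instead one must assemble them into an honest closed $F^\flat_\infty$-invariant subset of $Y_\infty^\flat$ directly, which is precisely where the tilting/almost-purity machinery of Section~\ref{section:3} becomes indispensable.
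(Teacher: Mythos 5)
Your reduction and tilting steps match the paper's framework, but the heart of your argument---the ``characteristic-$p$ dichotomy and uniformity''---contains a genuine gap, and it is precisely the step that the paper's proof handles with a completely different (Noetherian) device.

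First, the pointwise dichotomy. From $d((F^\flat)^{mn}(a),(F^\flat)^{mn}(b))\le d(a,b)^{p^{mn}}$ and $(F^\flat)^{mn}(z)=z$ you correctly deduce that $z$ is within $\delta^{p^{mn}}$ of the \emph{image} $(F^\flat)^{mn}(V^\flat)$. But those images are a moving target, their defining degrees grow, and their union need not be closed; so the statement that ``$\tilde z^\flat$ lies in the closed $F^\flat_\infty$-stable subset generated by the pullback of $V^\flat$'' does not, by itself, return you to $z\in V^\flat$. You acknowledge this as ``where the difficulty lies,'' but you do not actually assemble the invariant closed subset nor explain why membership in it forces $z\in V^\flat$. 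Second, the uniformity. Quasi-compactness of $Y_\infty^\flat$ can hand you a convergent subnet of lifts of a hypothetical sequence $z_j$ with $d(z_j,V^\flat)\to 0$, but compactness alone cannot propagate a conclusion at the limit backward to the tail of the sequence without some local rigidity near the limit; your ``the dichotomy applied near that limit'' is exactly the missing step, since the dichotomy you established is pointwise, not locally uniform. So the plan, as written, does not close.

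What the paper does instead is quite different in mechanism. After tilting, it invokes Scholze's approximation lemma to produce $g_c\in R^{\flat,\mathrm{perf}}$ with $\lvert H-g_c^\sharp(x)\rvert\le\lvert\varpi\rvert^{1/2}\max\{\lvert H(x)\rvert,\lvert\varpi\rvert^c\}$, approximates $g_c$ by a polynomial with coefficients in a Laurent-series field $k((u))$, expands the result as $\sum_{i\ge 0} f_{c,i}\,u^i$ with $f_{c,i}\in k[X^{(n)}]$, and then applies Noetherianity of $k[X^{(n)}]$ to the ideal $\langle f_{c,i}\rangle$ to obtain a uniform bound $M$. The resulting Lemma~\ref{thelemma} says that, for periodic $x$, $\lvert H(x)\rvert=0$ if and only if all $f_{c,i}(x^\flat)$ with $c\le M$ vanish; since each $f_{c,i}(x^\flat)\in\{0,1\}$, the failure of vanishing immediately gives $\lvert H(x)\rvert>\lvert\varpi\rvert^M$. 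The Noetherian bound $M$ is the source of uniformity, playing the role you hoped quasi-compactness would, and it gives an explicit $\varepsilon=\lvert\varpi\rvert^M$. If you want to salvage your dynamical-contraction picture you would need, at minimum, a concrete closed $T^\flat$-invariant subset of $Y_\infty^\flat$ whose rigid points project into $V^\flat$, together with a local openness statement near limit points---neither of which comes for free, and both of which the paper avoids entirely by trading dynamics for commutative algebra over the residue field.
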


The idea of using the theory of perfectoid space is straight forward since the construction of a perfectoid space is literately a dynamical system. Let $F:X\to X$ be a restricted of $p$-th power over $\C_p$. A correspondent perfectoid space $X^{perf}$ can be created with an endomorphism $F^{perf}$ such that the following diagram commutes
\[
\begin{tikzcd}
X\arrow[r,hook]\arrow[d,"F"] & {X}^{perf}\arrow[d,"{F}^{perf}"]\\
X\arrow[r,hook] & {X}^{perf}
\end{tikzcd}
\]

The tilt of the above diagram is
\[
\begin{tikzcd}
{X}^\flat\arrow[r,hook]\arrow[d,"{F}^\flat"] & {X}^{perf,\flat}\arrow[d,"{F}^{perf,\flat}"]\\
{X}^\flat\arrow[r,hook] & {X}^{perf,\flat}
\end{tikzcd}
\]
Using the theory of perfectoid spaces, one has a topological homeomorphism $\rho$ with an important approximation lemma to pass $X^{perf}$ to $X^{perf,\flat}$ with $\rho\circ F^{perf}=F^{perf,\flat}\circ \rho$. This connect the right edges of the above two diagram. On the other side, $X$ can be reduced to $\overline{X}$, defined over the residue field of $\C_p$ with $\overline{F}\e F\mod \C_p^{\circ\circ}$, where $\C_p^{\circ\circ}$ is the maximal ideal of the valuation ring of $\C_p$. The dynamical system $(\overline{X},\overline{F})$ can be naturally embedded into $(X^\flat,F^\flat)$ since the residue field of $\C_p^{\flat}=\widehat{\overline{\F_p(t^{1/p^{\infty}})}}$ is just a subfield of $\C_p^{\flat}$. In conclusion, we have reduction and embedding on one side and topological homeomorphism on the other side to connect the two commutative diagrams. Using this framework, we can pass results that is true over a field of characteristic $p$ to its correspondent field of characteristic $0$.

Finally, let me outline this chapter. I will introduce more about the theory of perfectoid spaces in the next sections, and carefully construct our framework of perfectoid spaces carefully in the third section. The fourth section will use the framework in the second section to study the periodic points of $F$. Then, I give proof of all theorems and required lemmas in the last section.
\section{An elementary introduction to perfectoid spaces}
What is a perfectoid space? Rather than giving a definition directly, I am going to introduce what the rigid geometry analysis is and where the perfectoid spaces are on the big picture. We start from an example from rigid analysis spaces which has been studied and applied to many questions in the number theory, Berkovich spaces. Let $K$ be an algebraic closed and completed field with respect to a non-archimedean value. We know $K$ is Hausdorff, but is totally disconnect and not locally compact. These topological properties make it difficult to use analysis to study $K$. Tate dealt with this problem and developed a method, now called rigid geometry analysis. 

The general problem is that given a geometric object $V$ over $K$, the topology on the object may be not suitable for the question that we want to study. The idea by Tate is constructing a larger space $X$ which has "good topological properties", and then embedding the geometric object $V$ into $X$ such that the underlying topology $V$ is unchanged. These preferred properties depend on the context of discussion. For example, Berkovich spaces are spaces that can properly and naturally define Laplacian operation analogue to the classical one on $\C$. \cite{Baker2008} gives a good explanation for the motivation of discussion Berkovich spaces. Perfectoid spaces serve different purposes. At first, Fontaine and Wintenberger found a canonical isomorphism of Galois groups\cite{FontaineWintenberger1979}, and the special case is the following theorem.
\begin{theorem}
The absolute Galois groups of $\Q_p(p^{1/p^{\infty}})$ and $\F_p((t^{1/p^{\infty}}))$ are canonically isomorphism.
\end{theorem}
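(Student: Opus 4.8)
The plan is to exhibit the claimed isomorphism as a composite of three canonical ones, reducing everything to the completed fields, where the tilting machinery applies. Write $L=\Q_p(p^{1/p^\infty})$, $M=\F_p((t^{1/p^\infty}))$, and let $K=\widehat L$, $K^\flat=\widehat M$ be their completions. The first step is that $\Gal(\overline L/L)\cong\Gal(\overline K/K)$, and likewise $\Gal(\overline M/M)\cong\Gal(\overline{K^\flat}/K^\flat)$ on separable closures: since $L$ is dense in $K$ and, being non-discretely valued (it contains $p^{1/p^n}$ for every $n$), is deeply enough ramified, Krasner's lemma lets one descend every finite separable extension of $K$ to one of $L$ of the same degree, and this assignment is compatible with composita, hence gives a canonical isomorphism of absolute Galois groups; the characteristic-$p$ case is identical.

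Next, $K$ is a perfectoid field: it is complete with non-discrete value group, and Frobenius on $K^\circ/p$ is surjective because $K$ contains all $p$-power roots of $p$. Its tilt $K^\flat=\varprojlim_{x\mapsto x^p}K$ is then a perfectoid field of characteristic $p$, and one identifies it explicitly as $\widehat{\F_p((t^{1/p^\infty}))}=\widehat M$, with $t$ corresponding to the pseudo-uniformizer $(p,p^{1/p},p^{1/p^2},\dots)$ and both residue fields equal to $\F_p$. The second step is Scholze's tilting equivalence: the functor $A\mapsto A^\flat$ is an equivalence between the category of finite \'etale $K$-algebras and the category of finite \'etale $K^\flat$-algebras; fixing compatible separable closures $\overline K$ and $\overline{K^\flat}$, it carries automorphism groups to automorphism groups, producing a canonical topological isomorphism $\Gal(\overline K/K)\cong\Gal(\overline{K^\flat}/K^\flat)$. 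Chaining $\Gal(\overline L/L)\cong\Gal(\overline K/K)\cong\Gal(\overline{K^\flat}/K^\flat)\cong\Gal(\overline M/M)$ then proves the theorem, canonicity being inherited from the functoriality of completion, of tilting, and of the passage to automorphism groups.

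The main obstacle is the tilting equivalence itself. Its proof runs through finite \'etale $K^\circ$-algebras: one shows, via almost purity, that the integral model of a finite \'etale $K$-algebra is almost finite \'etale over $K^\circ$; that finite \'etale $K^\circ$-algebras are determined by their reduction modulo a pseudo-uniformizer $\varpi$, which is the lifting step for which the perfectoid hypothesis (surjectivity of Frobenius on $K^\circ/p$) is indispensable; and that $K^\circ/\varpi\cong K^{\flat\circ}/\varpi^\flat$ canonically. All of this must be carried out in the language of almost mathematics in order to absorb the systematic ``up to $\varpi^{1/p^\infty}$'' errors, and it is exactly this almost purity theorem, due to Faltings and reproved by Scholze via perfectoid spaces, that constitutes the hard technical core. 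A secondary difficulty lies in the first step: one must check that $L$ and $M$ are arithmetically profinite enough that no finite separable extension of the completion fails to descend.
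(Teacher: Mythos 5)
The paper does not prove this theorem: it is cited as a special case of Fontaine--Wintenberger and used as motivation for the tilting machinery, so there is no in-paper argument to measure you against. Your three-step factorization — decompletion, tilting equivalence, decompletion again in characteristic $p$ — is the standard modern (Scholze-style) derivation and is correct; indeed it is precisely how the result sits inside the perfectoid framework this paper relies on. One small conceptual point worth fixing: in the first and third steps, what lets you pass from $L=\Q_p(p^{1/p^\infty})$ (resp.\ $M=\F_p((t^{1/p^\infty}))$) to its completion without changing the absolute Galois group is not that $L$ is ``deeply enough ramified'' or ``arithmetically profinite enough'' — it is simply that $L$ is henselian, being an algebraic extension of the complete field $\Q_p$ (and likewise $M$ over $\F_p((t))$), and every algebraic extension of a henselian field is henselian; that henselianity, via Krasner, is exactly what makes $E\mapsto E\otimes_L\widehat{L}$ an equivalence of categories of finite separable extensions. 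Deep ramification (non-discrete value group plus Frobenius surjectivity on $K^\circ/p$) is what you genuinely need, and correctly invoke, in the middle step to know that $K=\widehat{L}$ is perfectoid so the tilting equivalence applies and identifies $K^\flat$ with $\widehat{M}$. With that attribution of hypotheses cleaned up, the argument is complete modulo the almost-purity input, which you correctly flag as the hard kernel of the tilting equivalence.
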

This theorem is a convincing evidence pointing to that the worlds of characteristic $0$ and characteristic $p>0$ have some connection after adjoining all $p$-th power roots of $p$. One can formulate a more ambitious conjecture saying that a category of some geometry objects over a field of characteristic $0$ and characteristic $p$ is equivalent. Perfectoid spaces are the perfect answer to this wild speculation. Of course, the guess does not show himself immediately after Fontaine and Wintenberger's theorem, and we refer readers to \cite{Scholze2012}, the first paper of Scholze on the perfectoid spaces about the details and the intuition to formulate the conjecture. The theory of perfectoid space has many applications (see \cites{scholze2014,scholze2013}). Especially, it gives a complete solution on the local {L}anglands problem (see~\cite{scholze2013llp}) Also, \cite{Bhatt2014} gave a friendly introduction to the perfectoid spaces. In the following paragraphs, we should focus on how to define a perfectoid field and a perfectoid space, and we will compare the perfectoid spaces with the Berkovich space to let readers feel connection between new things and a canonical stuff.
\subsection{Valuations}
First, we recall the definition of valuations which is not the valuation that we are used to use in the number theory.
\begin{definition}
Let $R$ be a commutative ring. A valuation on $R$ is a multiplicative map $x:R\to \Gamma$, where $\Gamma$ is a totally ordered abelian group, written multiplicatively, such that $x(0)=0$, $x(1)=1$ and $x(a+b)\leq\max\{x(a),x(b)\}$ for all $a,b\in R$.

Given $R$ a topological structure, a valuation $x$ of a ring $R$ is continuous if the subset $\{r\in R\mid x(r)<\gamma\}\subset R$ is open for all $\gamma\in \Gamma$.
\end{definition}
\begin{remark}
The major difference of this definition with the one used by most number theorists is that the codomain $\Gamma$ doesn't need to be $\R$. It is tragic that we did not have a universal name for these similar terminologies. 

I will use $x$ to denote the valuations I just defined, use $|\cdot|$ to denote the $p$-adic or $t$-adic value of a field or a ring, and use $\|\cdot\|$ to denote the norm on a space over $K$. More precisely, the field $K$ in this chapter has a residue field of characteristic $p>0$, and the value of $K$ satisfies $|p|<1$. A ring $R$, which will be defined next, will equip with the Gauss norm, also denoted by $|\cdot|$, induced by the norm on $K$. Remember the Gauss norm is one of many valuations that can be defined on $R$.  
\end{remark}

\begin{definition}
An $N$-variables Tate algebra $R$ over a field $K$ is
\[
R=K\la X_1,\ldots, X_N\ra=\left\{\sum a_IX^I\mid |a_I|\to 0\text{ as }\|I\|\to \infty\right\}
\]
where, for a multi-index $I=(i_1,\ldots,i_N)\in\N^N$, $X^I\coloneqq  X_1^{i_1}\cdots X_N^{i_N}$ and $\|I\|\coloneqq \sum_{j=1}^N i_j$. A Tate ring is a subring of the ring of formal power series, and is also naturally a topological ring where the metrics is induced by the Gauss norm $|\cdot|$ where, for an element $f\in R$,
\[
|f|\coloneqq\max_I\{a_I\}.
\]

I further define $R^{\circ}\coloneqq K^{\circ}\la X_1,\ldots,X_N\ra$ where $K^\circ$ is the valuation ring of $K$.
\end{definition}

The abstract definition of a Tate-algebra $R$ is a topological $K$-algebra that has a subring $R_0\subset R$ such that $\{aR_0\mid a\in K^{\times}\}$ forms a basis of open neighborhood of $0$. We then say that an element $f\in R$ is bounded if $\{f^n\}\subseteq aR_0$ for some $a\in K$. Finally, we define $R^{\circ}$ be the ring of all power-bounded elements. I do not need such generality, but the above definition satisfies the general one. 

We are going to give two valuations on Tate $K$-algebra that are not equivalent to the valuations used in the number theory. The first example is not a continuous valuation, and the second one is.
\begin{example}
Let $R=K\la X,Y\ra$. For $f\in R\setminus\{0\}$, I can write $f$ as the follow
\[
f=\sum_{i=n_0}^\infty X^i\sum_{j=m(i)}^\infty a_{i,j}Y^j
\]
where $a_{i,m(i)}$ is not zero for all $i$. Consider the set $\Gamma=\gamma^\Z\times \zeta^\Z$ with the lexicographic order where $\gamma$ and $\zeta$ are just two symbols. In order ward, I say $(\gamma^i,\zeta^j)<(\gamma^{i'},\zeta^{j'})$ if $i<i'$ or ($i=i'$ and $j<j'$). A multiplication map $x_1: R\to \Gamma\cup\{0\}$ can be defined as
\[
x_1(f)=\gamma^{-n_0}\times \zeta^{-m(n_0)},
\]
for $f\neq 0$, and let $x_1(0)=0$. Readers can easily check that this is a valuation of $R$. The $x_1(0)$ can be thought as $(\gamma^{-\infty},\zeta^{-\infty})$, and it shows that, for an $f$ that is closed to $0$ in $R$ with respect to the Gauss norm, the $x_1(f)$ can be very far away from $(\gamma^{-\infty},\zeta^{-\infty})$. This indicates that $x_1$ is not continuous.
\end{example}
\begin{example}
Now, we give a more interesting example which can also be found in Scholze's first paper on the perfectoid spaces and many other lecture notes. We will focus on proving that the valuation we are going to construct is continuous. 

Let $\Gamma=\R_{>0}\times \gamma^{\Z}$ be a multiplicative group with the lexicographic order, so we have $(r,\gamma^{-1})<(1,\gamma^{-1})<(1,1)$ for $r<1$. Informally, we can consider $\gamma$ is infinitely closed to $1$. Given $f\in \C_p\la T\ra$, and expressing $f=\sum_{i}a_iT^i$, we can define a rank-$2$ valuation $x_2$ on the $\C_p\la T\ra$ by
\[
x_2(f)\coloneqq\max_i\{(|a_i|,\gamma^i)\}=(|f|_{D(0,1)},\gamma^{i_0})
\]
with $i_0=\max\{i\mid |a_i|=|f|_{D(0,1)}\}$ where $|\cdot|_{D(0,1)}$ is the Gauss norm. Somehow, we can understand this point as the starting point of the branch of the Gauss point down to $0$, or we can think this is a valuation correspondent to the open disc $D^-(0,1) \coloneqq \{a\in \C_p\mid |a|<1\}$. All higher rank valuations over $\C_p\la T\ra$ are constructed similarly like our example, and they are classified as the type $5$ points of the Berkovich spaces. 

To show that this valuation is continuous. we first note that $|f-g|_{D(0,1)}<|f|_{D(0,1)}$ implies $|f|_{D(0,1)}=|g|_{D(0,1)}$. Therefore, if we fix an element $f\in \{f\in\C_p\la T\ra\mid x_2(f)<(r,\gamma^n)\}$, denote this set by $S$, the open ball $\mathcal{B}(f,r)\coloneqq\{g\in\C_p\la T\ra\mid |g-f|_{D(0,1)}\leq r\}$ with $r<|f|_{D(0,1)}$ is contained in $S$. It implies that $S$ is open, and so the valuation $x_2$ is continuous.
\end{example}

We will not care too much about what the valuations look like, but there are some valuations we are interested. Let $x\in \C_p$ and $f(T)\in\C_p\la T\ra$. The valuation given by evaluating $f$ at the point $x$, $|f(x)|$, is known as the type $1$ point on the Berkovich space. These and some other slightly generalized points are what we would like to focus on. They are called \textbf{rigid points} and will be defined in the following section.

\subsection{Huber's adic space}
Scholze chose to work on Huber's adic space. Huber's adic space over a topological ring $R$ as a set is a set of valuations of $R$. Instead of naively putting all valuations of $R$ in the set like the Berkovich space considering all rank-$1$ valuations, Huber's adic space over $R$ places a restriction on the choice of valuations which makes it possible to given a scheme structure on the space.

\begin{definition}
An affinoid $K$-algebra is a pair $(R,R^+)$, where $R$ is a Tate $K-$algebra and $R^+$ is an open and integrally closed subring of $R^\circ$.
\end{definition}
Then, Huber associated a space of equivalent continuous valuations to a pair $(R,R^+)$, and we call the space Huber's adic space or adic space for short. The definition of the equivalent relations of valuations can be found in P. Scholze's paper, and the only property that matters in this paper is that if $x$ and $x'$ are  equivalent, then $\text{Supp}(x)=\text{Supp}(x')$ where $Supp(x)\coloneqq\{f\in R\mid |f|_x\neq 0\}$. To simplify our notation, we denote $x(f)$ by $|f(x)|$. 
\begin{definition}
For a pair of affinoid $K$-algebra $(R,R^+)$, Huber's adic space $X$ is 
\[
\Spa(R,R^+)\coloneqq\{x\text{ continuous valuation on }R\mid f\in R^+:|f(x)|\leq 1\}/\sim,
\]
where $\sim$ is the equivalent relation.

A Huber's adic space $X$ equips with the topological structure which the rational subsets,
\[
U\left(\dfrac{f_1,,\ldots,f_n}{g}\right)\coloneqq\{x\in\Spa(R,R^+)\mid |f_i(x)|\leq|g(x)|\quad\forall i\},
\]
form the base of open sets.
\end{definition}

We should note that even thought the Berkovich spaces and the adic spaces $\Spa(\C_p\la T\ra,\C_p^\circ\la T\ra )$ are not different much as sets, their topological structures are completely different. The major difference is that the rational sets form a scheme structure on an adic space, and the Berkovich spaces do not have a scheme structure.
We will not use the scheme structure in this paper, and we refer readers to Scholze's original paper for more details. 

As a subject of the rigid geometry, we should embed interesting points into these spaces. Let us talk about rigid points on Huber's adic spaces.
\begin{definition}
Given an affinoid $K$-algebra $(R,R^+)$, we call a point $x\in\Spa(R,R^+)$ a \textbf{rigid point} if $\{f\in R\mid |f(x)|=0\}$ is a maximal ideal of $R$, and we define
\[
\mathrm{R}(\Spa(R,R^+))\coloneqq\{x\in\Spa(R,R^+)\mid x\text{ is a rigid point}\}.
\]
\end{definition}
We will also use $\mathrm{R}(X)$ to denote the closed point of $X$ for a ringed space $X$. Let $R$ be an $N$-variable Tate $K$-algebra, we note that a closed point $x\in\Aff_{K^{\circ}}^N$ defines a rigid point on $\Spa(R,R^\circ)$. Moreover, if $K$ is algebraically closed, then there is a bijection from the points on $\Aff^N_{K^\circ}$ to the rigid points on $\Spa(R,R^\circ)$.

\subsection{Perfectoid fields, perfectoid $K$-algebra, affinoid perfectoid spaces and perfectoid spaces}
An affine piece of a perfectoid space is an adic space over an "almost perfect" $K$-algebra. Let us begin this section by defining perfectoid fields. 
\begin{definition}
I say $K$ is a \textbf{perfectoid field} if $|K|\subseteq \R_{\geq 0}$ is dense in $\R_{\geq 0}$ and the Frobenius map $\Phi:K^\circ/p\to \Phi:K^\circ/p$ is surjective. 
\end{definition}
Why are this field perfect? Scholze gave the following construction. Given a perfectoid field $K$, he associate $K$ with another field of characteristic $p$, denoted by $K^\flat$, and proved that the category of finite extensions of $K$ and the category of finite extensions of $K^\flat$ are equivalent. Actually, Fontaine and Wintenberger's theorem is a special case of this theorem. 

Next, let us define the perfectoid $K$-algebra. Let us fix an element $\varpi\in K^\circ$ satisfying $|p|\leq |\varpi|<1$. It does not really matter which $\varpi$ I fix, so I can choose randomly or just let $\varpi= p$. 
\begin{definition}
A perfectoid $K$-algebra is a Banach $K$-algebra $R$ which has the subset $R^\circ\subset R$ of powerbounded elements open and bounded and the Frobenius morphism $\Phi:R^\circ/\varpi\to R^\circ/\varpi$ surjective. Morphisms between perfectoid $K$-algebras are the continuous morphisms of $K$-algebras.
\end{definition}
\begin{remark}
The condition that $\Phi:R^\circ/\varpi\to R^\circ/\varpi$ is surjective is equivalent to the condition that $\Phi:R^\circ/\varpi^{1/p}\to R^\circ/\varpi$ is injective. The later articles by Scholze and other people use the later condition as the definition.
\end{remark}
The field of coefficients $K$ is naturally contained in $R$, and $R^\circ \cap K$ is $K^\circ$. Since the Frobenius morphism is surjective on $R^\circ/\varpi$, It implies that $K$ is a perfectoid field. Hence, a perfectoid $K$-algebra can be simply through as a quotient Tate $K$-algebra for a perfectoid field $K$ with the preimage of every transcendental element by the Frobenius map not empty. 
\begin{example}\label{example:perfectoid ring}
One can easily check that $\Q_p(p^{1/p^{(\infty)}})\coloneqq \cup_{i}\Q_p(p^{1/p^i})$ and $\F_p(t^{1/p^{(\infty)}})$ are perfectoid fields, and their algebraic closed and completed field, $\C_p$ and $\widehat{\overline{F_p(t)}}$, are also perfectoid. In this paper, I only use the latest two fields. 

Let $K$ be a perfectoid field, and I can give a perfectoid $K$-algebra like the following
\[
K\la T^{(\infty)}\ra=K\la T^{(0)},T^{(1)},T^{(2)},\ldots\ra/(T^{(i)}-(T^{(i+1)})^p\mid i\geq 0).
\]
More general perfectoid $K$-algebras could be like $K\la\ T_1^{(\infty)}, \ldots, T_N^{(\infty)}\ra$ or like $K\la\ T_1^{(\infty)}, \ldots, T_N^{(\infty)}\ra/I$ where $I$ is an ideal of the ring $K\la\ T_1^{(\infty)}, \ldots, T_N^{(\infty)}\ra$.  
\end{example}
We finally have enough terminologies to define perfectoid spaces.
\begin{definition}
A perfectoid affinoid $K$-algebra $(R,R^+)$ is an affinoid $K$-algebra over a perfectoid $K$-algebra. The \textbf{affinoid perfectoid} space, an affine piece, is an adic  space $X$ over a perfectoid affinoid $K$-algebra, i.e. $X=\Spa(R,R^+)$ is an affinoid perfectoid space if $R$ is a perfectoid $K$-algebra. Then, one defines \textbf{perfectoid spaces} over $K$ to be the objects obtained by gluing affinoid perfectoid spaces. The topology on perfectoid spaces follows from the topological structure on Huber's adic space.
\end{definition}
\begin{example}
One can define a \textbf{projective perfectoid space} by gluing affinoid perfectoid spaces. Let $R_i^{perf}=K\la T_{i,0}^{(\infty)},\ldots T_{i,i-1}^{(\infty)},T_{i,i+1}^{(\infty)},\ldots T_{i,N}^{(\infty)}\ra$ for $i=1,\ldots,N$, and we define the $i$th piece affinoid perfectoid space to be
\[
\Aff_i^{N,perf}=\Spa(R_i, R_i^\circ).
\]
For any $i\neq j$, we glue 
$$U_{i,j}:=U(1,T_{i,0}^{(0)},\ldots,T_{i,i-1}^{(0)},T_{i,i+1}^{(0)},\ldots,T_{i,N}^{(0)};T_{i,j}^{(0)})\subset \Aff_i^{N,perf}$$
and
$$U_{j,i}:=U(1,T_{j,0}^{(0)},\ldots,T_{j,j-1}^{(0)},T_{j,j+1}^{(0)},\ldots,T_{j,N}^{(0)};T_{j,i}^{(0)})\subset \Aff_j^{N,perf}$$
by the transition function $\phi_{i,j}:U_{i,j}\to U_{j,i}$ induced by the pullback function  
\[
\phi_{i,j}^{*}(T_{j,k}^{(n)})=\dfrac{T_{i,k}^{(n)}}{T_{i,j}^{(n)}}\quad\text{and}\quad \phi_{i,j}^*(T_{j,i}^{(n)})=\dfrac{1}{T_{i,j}^{(n)}}
\]for all $n$.
\end{example}
\subsection{Visualize a one dimensional perfectoid space}
I would like to give some graphs to image a one dimensional perfectoid space, and also give a preliminary construction for a framework of applying perfectoid spaces on dynamical systems. To illustrate the idea, I will only give a proof if it is necessary. The Example~\ref{example:perfectoid ring} will be used. Let $K=\C_p$ associate with $K^\flat=\widehat{\overline{\F_p(t)}}$. Let $R_\bullet=K\la T^{\bullet}\ra$ and $R^\flat_{\bullet}=K^\flat\la T^{\bullet}\ra$ for $\bullet=0,1,2\ldots$ and $\infty$. I let $X_{\bullet}=\Spa(R_{\bullet},R_{\bullet}^{\circ})$ and similarly define $X_\bullet^\flat$. The $\Aff_{K}^1$ can be embedded into $X_0$, and the embedding is a bijection on the closed points of $\Aff_K^1$ and the rigid points of $X_0$ (see Figure \ref{perfectoidfig1}).

\begin{figure}
    \centering
    \includegraphics[scale=0.3]{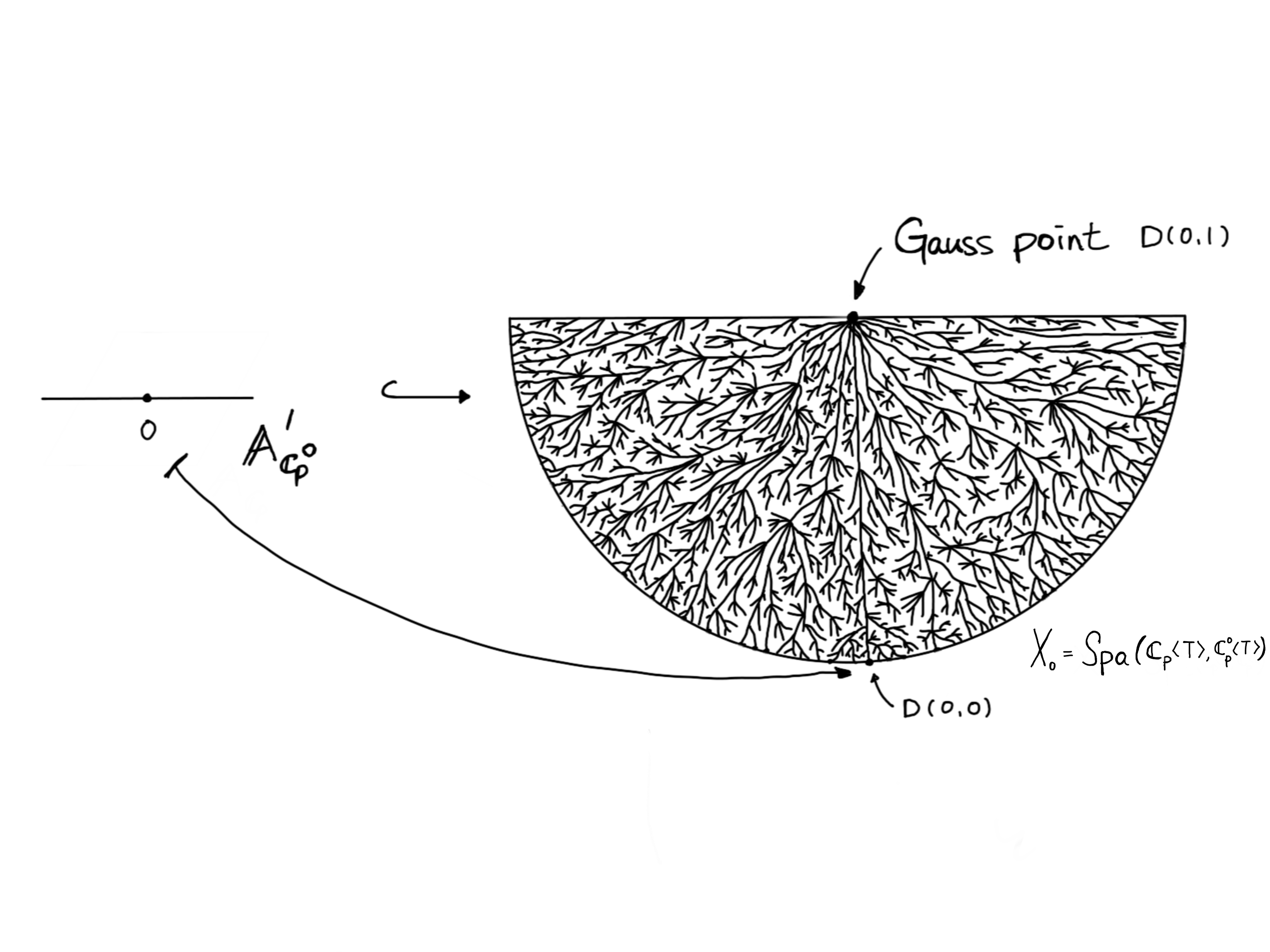}
    \caption{Adic space over $\C_p$}
    \label{perfectoidfig1}
\end{figure}
Note that each $X_i$ is just a Huber's adic space over the pair $(K\la T\ra,K^\circ\la T\ra)$. Scholze has already given an image of such space in his first paper on Perfectoid theory, so I refer readers to his paper for more details. According to Scholze's description, we can image $X_i$ as a half of a Berkovich tree with infinite many point accumulating around each type II point. For each branch point except the Gauss point on a Berkovich tree, the set of extra points is $\mathbb{P}^1_k$ where $k$ is the residue field of $K$, and one for the Gauss point is $\Aff_k^1$ (see Figure \ref{perfectoidfig2}). It is not precisely analogue, but one can think those extra points is a kind of blow-up at each branch point since when we consider the blow-up on a variety, it gives an affine or projective line over the field of definition which is exactly the residue field of the coordinate ring.

\begin{figure}
    \centering
    \includegraphics[scale=0.3]{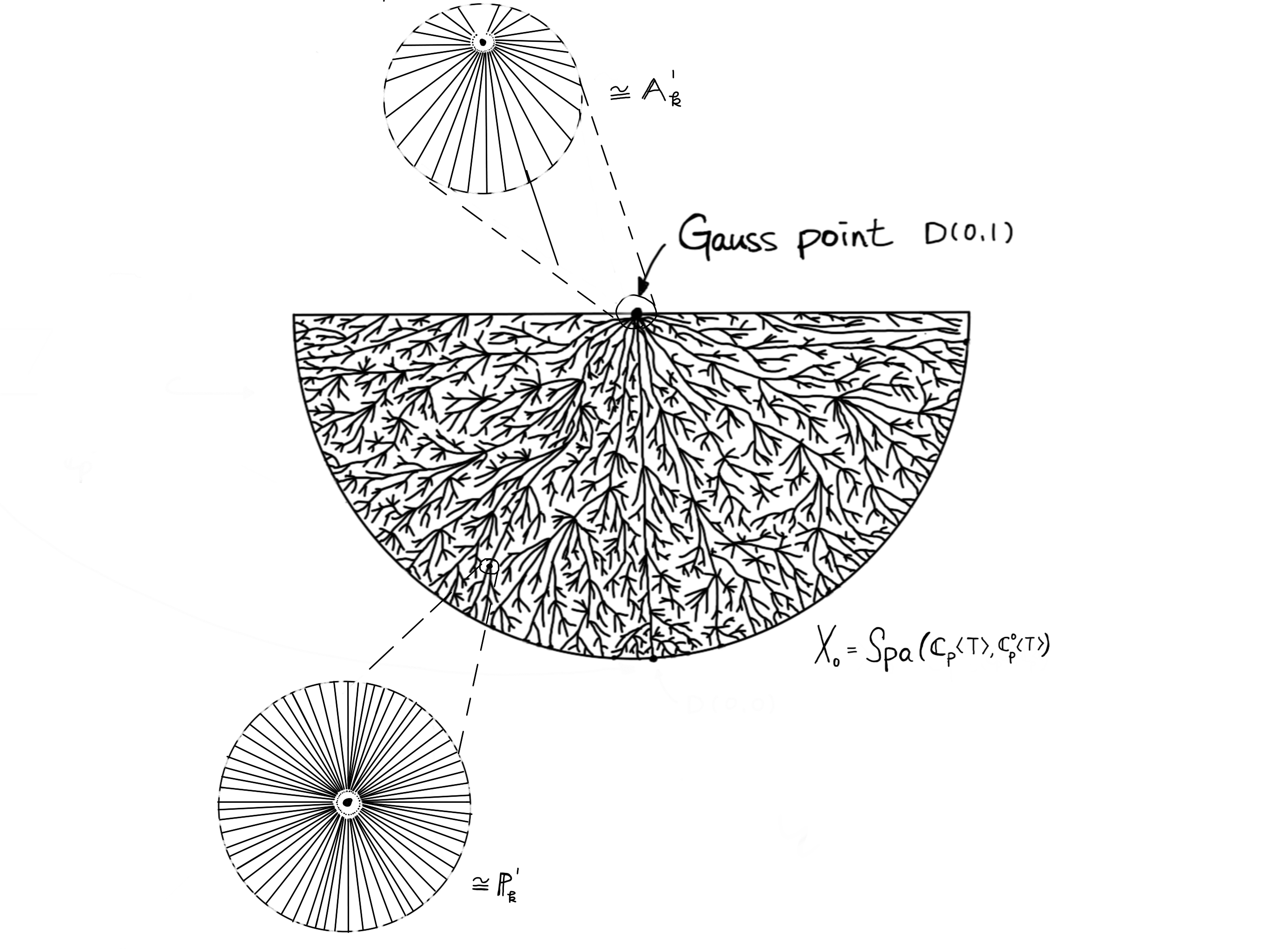}
    \caption{Type V points of Adic space}
    \label{perfectoidfig2}
\end{figure}

Now, I will focus on $X_0$ and $X_1$ for now. Since $R_0$ contains in $R_1$, a valuation $x_1\in X_1$ naturally maps to a valuation on $X_0$ by taking restriction. Note that the relation $T^{(0)}=(T^{(1)})^p$ implies that the induced map on $X_2$ to $X_2$ is $p$ to $1$. For example, given a $1$-dimensional valuation with respect to a disc $D(a,r)$ in $\C_p$ with $a\neq 0$, the valuation of $T_0$ is
\[
|T^{(0)}|_{D(a,r)}=|(T^{(1)})|^p_{D(a,r)}=|(\dfrac{1}{r}(T^{(1)}-a))^p|_{D(0,1)}=|\dfrac{1}{r^p}(T^{(1)})^p-a^p|_{D(0,1)}.
\]
Hence all $p$-th roots of $a^p$ will give the same valuation on $T^{(0)}$, so I can think that $X_{i+1}$ to $X_i$ is a $p$ to $1$ covering map, and the inverse limit of these covering map is a perfectoid space (see the top row of Figure \ref{perfectoidfig3}).

\begin{figure}
    \centering
    \includegraphics[scale=0.35]{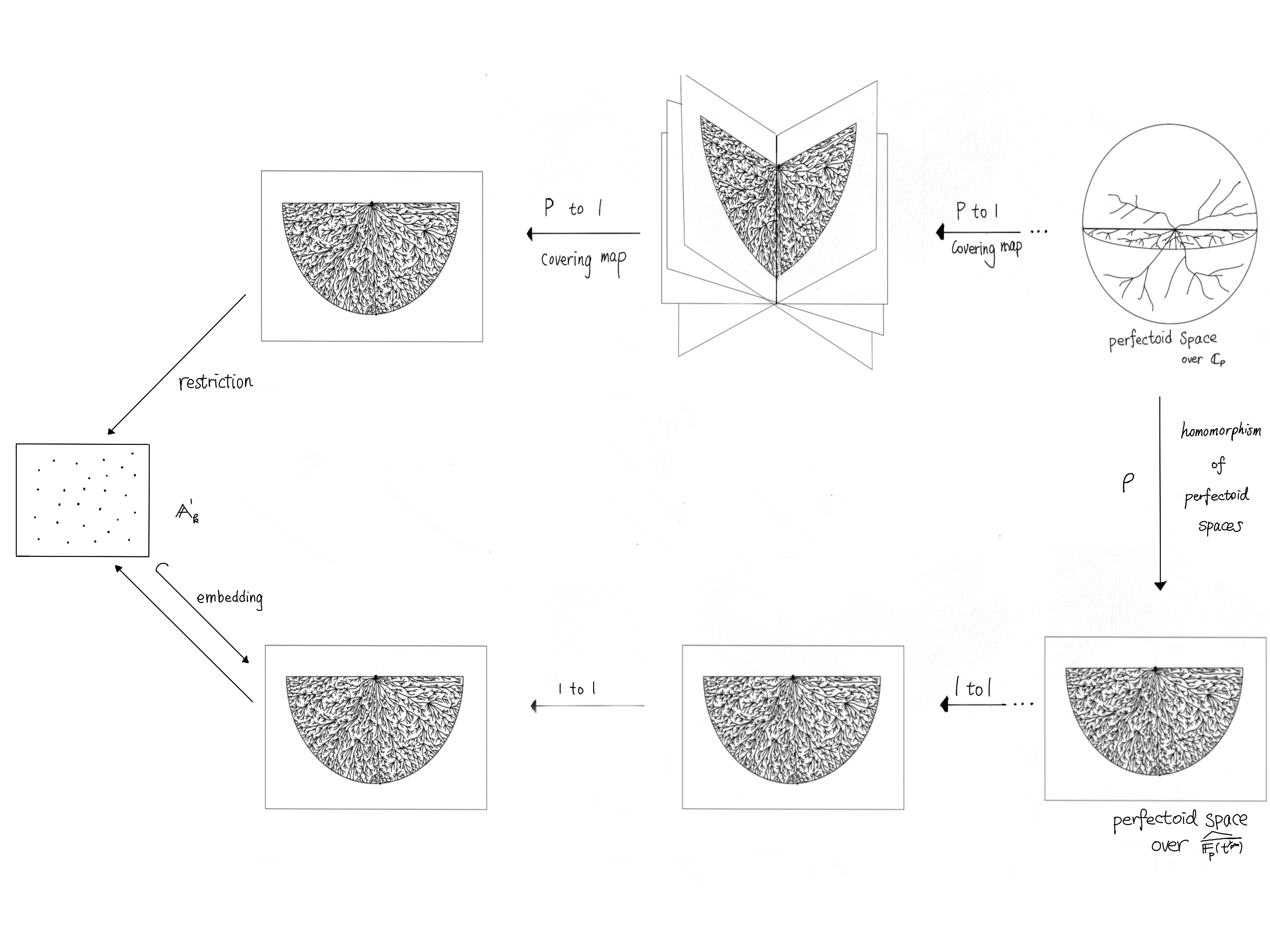}
    \caption{Perfectoid Spaces}
    \label{perfectoidfig3}
\end{figure}

For spaces over the field $K^\flat$, the only difference is that the map from $X_{i+1}^\flat$ to $X_i^\flat$ is a one-to-one map since Frobenius map is bijection over a field of positive characteristic (see the button row of Figure \ref{perfectoidfig3}). Now, there are two maps to connect the top row and the bottom of Figure \ref{perfectoidfig3}. The left side is elementary. In $K$, the valuation ring $K^\circ$ quotient out by the maximal ideal is $k$, and it can be naturally embedded to $K^\flat$. On the other side, Scholze's theory says there is a homomorphism from the perfectoid space over $K$ to one over $K^\flat$. In the next section, I will formally prove the idea given above. 
\section{Construct our framework}~\label{section:3}
\subsection{Passing to the perfectoid spaces from the affine spaces}
Let me first give the following setups. We let $K=\C_p$ and $K^{\flat}=\widehat{\overline{\F_p(t)}}$, and let $R=K[X_1,\ldots,X_N]$ and $R^\flat=K^\flat[X_1,\ldots,X_N]$ be the coordinate rings of $\Aff_K^N$ and $\Aff_{K^\flat}^N$ respectively. The normalized value of a completed field $L$ is denoted by $|\cdot|:L\to\R$, and we denote the valuation ring $\{x\in L\mid |x|\leq 1\}$ of $L$ by $L^{\circ}$ and the maximal ideal $\{x\in L^\circ\mid |x|<1\}$ by $L^{\circ\circ}$. Finally, the residue field $k$ of $K$ is $k\coloneqq  K^\circ/K^{\circ\circ}$ and note that $K^{\flat,\circ}/K^{\flat,\circ\circ}$ is also isomorphic to $k$ which can be embedded into $K^\flat$ naturally.

The $R^\circ$ and $R^{\flat,\circ}$ are the sets of power-bounded elements in the correspondent rings, and it is easy to check that they are $R^\circ=K^\circ[X_1,\ldots,X_N]$ and $R^{\flat,\circ}=K^{\flat,\circ}[X_1,\ldots,X_N]$ since $|f^n|=|f|^n$ where $|\cdot|$ is the Gauss norm on $R$ and $R^\flat$.

We call a function $F:\Aff_K^N\to\Aff_K^N$ the \textbf{lift of $p$th power}, written $F=(F_1,\ldots,F_N)$ for $F_i\in R^\circ$, if we can find an element $\varpi$ with $|p|\leq |\varpi|<1$ such that
\[
F_i\e(G_i)^p\mod\varpi
\]
where $G_i$ is in $R^\circ$ for all $i$. We will call a function $F:\Aff^N_K\to\Aff^N_K$ \textbf{restricted lift of $p$th power} if $F$ is a lift of $p$th power which satisfies 
\[
F:\Aff_K^N\setminus\Aff_{K^\circ}^N\to \Aff_K^N\setminus\Aff_{K^\circ}^N\qquad\text{and}\qquad\|F(X)\|>\|X\|
\]
where $\|X\|=\max_i\{|X_i|\}$ if we write $X=(X_1,\ldots,X_N)$. The following lemma shows that this kind of maps exists and is not a lift of Frobenius. 
\begin{lemma}\label{keylemma}
Let $K=\C_p$ and $F=(F_1,\ldots,F_N):\Aff^N_K\to\Aff^N_K$ be a lift of $p$th power, $\varpi\in K^\circ$ with $|p|\leq|\varpi|<1$, and assume $F_i$ can be expressed as
\[
F_i=(G_i)^p+\varpi f_i\qquad\text{for each }i
\]
where $f_i\in R^\circ$ has degree smaller that the degree of $G_i^p$ and where $G_i$ is of the form
\[
G_i(X_1,X_2,\ldots,X_N)=\sum_{j=1}^{m_i}c_{i,j}X_i^{q_{i,j}}
\]
where $c_{i,j}\in K^{\circ}\setminus K^{\circ\circ}$, $m_i\in \N$ and $q_{i,j}$ are a power of $p$ with $q_{i,j}>q_{i,j'}$ if $j>j'$.  Then, $F$ is a restricted lift of $p$th power.
\end{lemma}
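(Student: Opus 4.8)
The plan is to verify directly the two conditions in the definition of a \emph{restricted lift of $p$th power}: that $\|F(X)\|>1$ whenever $\|X\|>1$, and that $\|F(X)\|>\|X\|$ on that region. Both will drop out of a single ultrametric domination estimate applied to one well-chosen coordinate of $F$. (That $F$ is a lift of $p$th power is already granted, and is anyway immediate from the assumed decomposition $F_i=(G_i)^p+\varpi f_i$ with $G_i\in R^\circ$.)

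First I would fix $X=(X_1,\dots,X_N)$ with $\|X\|>1$ and pick an index $k$ with $|X_k|=\|X\|$. The decisive structural feature is that $G_k$ involves only the variable $X_k$, so $G_k(X_k)=\sum_{j=1}^{m_k}c_{k,j}X_k^{q_{k,j}}$ and, since $|c_{k,j}|=1$ (because $c_{k,j}\in K^\circ\setminus K^{\circ\circ}$), $|X_k|>1$, and the $q_{k,j}$ are strictly increasing in $j$, the terms have pairwise distinct absolute values. Hence $|G_k(X_k)|=\|X\|^{q_{k,m_k}}$ and $|(G_k(X_k))^p|=\|X\|^{d_k}$, where $d_k\coloneqq p\,q_{k,m_k}=\deg(G_k^p)\ge p\ge 2$.

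Next I would control the perturbation $\varpi f_k(X)$. Since $f_k\in R^\circ=K^\circ[X_1,\dots,X_N]$ has all coefficients of absolute value $\le 1$ and total degree at most $d_k-1$, and since $\|X\|>1$ forces $|X_1^{e_1}\cdots X_N^{e_N}|\le\|X\|^{e_1+\dots+e_N}\le\|X\|^{d_k-1}$ for each monomial occurring, we get $|\varpi f_k(X)|\le|\varpi|\,\|X\|^{d_k-1}<\|X\|^{d_k}$. So the leading part strictly dominates in $F_k(X)=(G_k(X_k))^p+\varpi f_k(X)$, and the non-archimedean triangle inequality gives $|F_k(X)|=\|X\|^{d_k}$. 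As $d_k\ge 2$ and $\|X\|>1$, this yields $\|F(X)\|\ge|F_k(X)|=\|X\|^{d_k}\ge\|X\|^{2}>\|X\|>1$, which is exactly the two asserted properties at once.

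The argument is in the end just careful bookkeeping with the ultrametric inequality, so I do not expect a genuine obstacle; the points that need attention are (i) confirming the leading term of $G_k$ is isolated in absolute value, which is precisely where the hypotheses $|c_{k,j}|=1$, strict monotonicity of the $q_{k,j}$, and $|X_k|>1$ are used together, and (ii) ensuring the degree bound on $f_k$ is the \emph{total} degree, so it still controls $|f_k(X)|$ when $f_k$ involves the other coordinates, each of absolute value at most $\|X\|$. I would also note, as the surrounding text anticipates, that $\bar F_k=\bar G_k^{\,p}$ with $\bar G_k=\sum_j\bar c_{k,j}X_k^{q_{k,j}}$ need not be $X_k^p$, so these maps are strictly more general than lifts of Frobenius.
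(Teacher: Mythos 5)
Your proof is correct and follows essentially the same route as the paper's: pick the coordinate $k$ realizing $\|X\|$, observe that the top-degree monomial of $(G_k)^p$ in $X_k$ strictly dominates both the lower terms and the perturbation $\varpi f_k$ under the ultrametric inequality, and conclude $\|F(X)\|=\|X\|^{d_k}>\|X\|>1$. Your write-up is in fact slightly more careful than the paper's (which tacitly conflates $G_n$ with $(G_n)^p$ in its display), but the underlying argument is identical.
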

\begin{proof}
Given an element $\mathrm{x}\coloneqq  (x_1,\ldots,x_N)\in\Aff_K^N\setminus\Aff_{K^\circ}^N$, we can find an integer $n$ such that $|x_n|=\max_i\{|x_i|\}\geq |x_i|$ for all $i$ and $|x_n|>1$. It follows that
\[
|F_n(\mathrm{x})|=\left|\sum_{j=1}^{m_n} c_{n,j}x_n^{q_{n,j}}+\varpi f_n(\mathrm{x})\right|=|c_{n,m_n}x_n^{q_{n,m_n}}|>|x_n|>1,
\]
so $F(\mathrm{x})$ is in $\Aff_K^N\setminus\Aff_{K^{\circ}}^N$ and $\max_i\{F_i(\mathrm{x})\}>\max_i\{x_i\}$.

\end{proof}

\subsubsection{Adic space}
From now on, we just fix a restricted lift of $p$th power $F:\Aff^N_K\to\Aff^N_K$ and the correspondent element $\varpi\in K^\circ$ in our discussion. 

We let $R^{ad}\coloneqq  K\la X_1,\ldots,X_N\ra$ and $R^{\flat,ad}\coloneqq  K^\flat\la X_1,\ldots,X_N\ra$. The valuation on $R^{ad}$ (resp. $R^{\flat,ad}$) is uniquely extended from the Gauss norm on $R$, so the set $R^{ad,\circ}$(resp. $R^{\flat,ad,\circ}$), power-bounded elements of $R^{ad}$(resp. $R^{\flat,ad}$), is $K^\circ\la X_1,\ldots,X_N\ra$(resp. $K^{\flat,\circ}\la X_1\ldots,X_N\ra$)[BGR, p. 193]. Although most statements in this section will be given for the field $K$, most of them are also true for fields of positive characteristic. We will remind the readers if a statement is only true for fields of characteristic $0$. 

Then, We define  
\[
\Aff^{N,ad}_K\coloneqq Spa(R^{ad},R^{ad,\circ})
\]
and similarly $\Aff^{N,ad}_{K^\flat}\coloneqq  Spa(R^{\flat,ad},R^{\flat,ad,\circ})$.  

The function $F^{ad}:\Aff^{N,ad}_K\to\Aff^{N,ad}_K$ is induced by the pull-back, for each $i$,
\[
F^*(X_i)=F_i(X_1,\ldots,X_N).
\]
More precisely, let $x\in \Aff^{N,ad}_K$. A valuation $F^{ad}(x)$ is given by
\[
|f(F^{ad}(x))|\coloneqq  |F^*(f)(x)|=|f(F_1(X),\ldots,F_N(X))(x)|
\]
where $f$ is belong to $R^{ad}$. 

We identify every valuation $x$ of $R^{ad}$ not in $\Aff^{N,ad}_K$ as the infinite point, denoted by $\infty$, and we have $F^{ad}(\infty)=\infty$ and ${F^{ad}
}^{-1}(\infty)=\{\infty\}$ because $F^{ad}$ is restricted.
Finally, we have the following lemma.
\begin{lemma}\label{tauFFtau}
There exists a natural embedding $\tau:\mathrm{R}(\Aff^N_K)\hookrightarrow\Aff^{N,ad}_K\cup\{\infty\}$ with $\tau(\mathrm{R}(\Aff^N_{K^\circ}))=\mathrm{R}(\Aff^{N,ad}_K)$ and $\tau(\mathrm{R}(\Aff^N_K)\setminus \mathrm{R}(\Aff^N_{K^\circ}))=\{\infty\}$. Moreover, we have a commute diagram
\begin{center}
\begin{tikzcd}
\Aff_K^N\arrow{d}{F}\arrow{r}{\tau}&\Aff_K^{N,ad}\arrow{d}{F^{ad}}\cup\{\infty\}\\
\Aff_k^N\arrow{r}{\tau}&\Aff^{N,ad}_K\cup\{\infty\}
\end{tikzcd}
\end{center}
i.e. $\tau\circ F^{ad}=F\circ\tau$.
\end{lemma}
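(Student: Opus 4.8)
The plan is to define $\tau$ piecewise, on the unit polydisc and on its complement, and then check the two image equalities and the commutativity of the diagram one at a time. Since $K=\C_p$ is algebraically closed, $\mathrm{R}(\Aff^N_K)$ is just the set of points $x=(x_1,\dots,x_N)\in K^N$, and $\mathrm{R}(\Aff^N_{K^\circ})$ the subset with $\|x\|\le 1$. For such an $x$ I set $\tau(x)$ to be the valuation ``evaluate at $x$'', namely $f\mapsto|f(x)|$, on $R^{ad}=K\la X_1,\dots,X_N\ra$; for $\|x\|>1$ I set $\tau(x)=\infty$.

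First I would verify that $\tau(x)$ is a genuine point of $\Aff^{N,ad}_K=\Spa(R^{ad},R^{ad,\circ})$ when $\|x\|\le 1$. Writing $f=\sum_I a_IX^I\in R^{ad}$, one has $|a_Ix^I|\le|a_I|\to 0$, so the series $f(x)$ converges in $K$; the map $f\mapsto|f(x)|$ is multiplicative and satisfies the ultrametric inequality because $|\cdot|$ on $K$ does, it is continuous because $|f(x)|\le|f|$ for the Gauss norm, and it takes values $\le 1$ on $R^{ad,\circ}=K^\circ\la X\ra$. The support of $\tau(x)$ is the maximal ideal $(X_1-x_1,\dots,X_N-x_N)$ of $R^{ad}$; this shows at once that $\tau$ is injective on $\mathrm{R}(\Aff^N_{K^\circ})$ (distinct points have distinct supports, and equivalent valuations share a support) and that each $\tau(x)$ is a rigid point, and the converse direction is exactly the bijection between closed points of $\Aff^N_{K^\circ}$ and rigid points of $\Spa(R^{ad},R^{ad,\circ})$ recalled earlier. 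Hence $\tau(\mathrm{R}(\Aff^N_{K^\circ}))=\mathrm{R}(\Aff^{N,ad}_K)$, and $\tau(\mathrm{R}(\Aff^N_K)\setminus\mathrm{R}(\Aff^N_{K^\circ}))=\{\infty\}$ by construction.

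For the commutative diagram I would split according to whether $\|x\|\le 1$. Each $F_i$ is a polynomial with coefficients in $K^\circ$, so $F$ maps $\Aff^N_{K^\circ}$ into itself, while the restricted hypothesis on $F$ gives $F(\Aff^N_K\setminus\Aff^N_{K^\circ})\subseteq\Aff^N_K\setminus\Aff^N_{K^\circ}$; thus $F$ respects the decomposition of $K^N$ that defines $\tau$. If $\|x\|\le 1$, then $\|F(x)\|\le 1$, both $\tau(x)$ and $\tau(F(x))$ are evaluation valuations, and for $f\in R^{ad}$ the defining formula for $F^{ad}$ yields $|f(F^{ad}(\tau(x)))|=|F^*(f)(x)|=|f(F_1(x),\dots,F_N(x))|=|f(F(x))|$, i.e.\ $F^{ad}(\tau(x))=\tau(F(x))$. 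If $\|x\|>1$, then $\tau(x)=\infty$, $F^{ad}(\infty)=\infty$, and $\|F(x)\|>1$ forces $\tau(F(x))=\infty$, so the two sides coincide again. Together these give $F^{ad}\circ\tau=\tau\circ F$ on $\mathrm{R}(\Aff^N_K)$, which is the commutativity in the statement.

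The two ingredients that are not purely formal are: the identification of the rigid points of the adic unit polydisc with $(K^\circ)^N$, which is quoted from the background section; and the legitimacy of substituting the $F_i\in R^{ad,\circ}$ into a restricted power series, i.e.\ that $F^*$ is a well-defined continuous endomorphism of $R^{ad}$ --- but this is precisely what was already used to define $F^{ad}$, so there is nothing new to prove. I expect the only point requiring a little care to be the case split in the last paragraph, namely confirming that $F$ preserves each of the two pieces of $K^N=\mathrm{R}(\Aff^N_{K^\circ})\sqcup(\mathrm{R}(\Aff^N_K)\setminus\mathrm{R}(\Aff^N_{K^\circ}))$; no genuine obstacle arises.
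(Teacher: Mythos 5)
Your proposal is correct and follows essentially the same route as the paper: define $\tau$ by evaluation valuations on the unit polydisc and by $\infty$ outside it, identify the rigid points of $\Spa(R^{ad},R^{ad,\circ})$ with $(K^\circ)^N$ (the paper cites BGR for this, you invoke the bijection recalled in the background section), and verify commutativity directly, using that $F$ has coefficients in $K^\circ$ on the polydisc and the restricted hypothesis off it. You simply spell out the verifications that the paper leaves as ``trivial'' or ``follows by construction.''
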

\begin{proof}
Since we assume $K$ is algebraical closed, a closed point $x$ of $\Aff_K^N$ defines a valuation on $R$ by evaluating the polynomial for the given $x$, and it can be naturally extended to $R^{ad}$, so $x$ maps to the extended valuation over $\tau$. Given $f\in R^\circ$ and a valuation on $R$ that correspondent to a closed point in $\Aff_{K^\circ}^N$, the valuation of $f$ is smaller than $1$ which implies that the extension of the valuation in $R^{ad}$ is in $\Aff^{N,ad}_{K}$. Finally, we will just define $\tau(R(\Aff^N_K)\setminus R(\Aff^N_{K^\circ}))=\{\infty\}$, and finish the construction of the map $\tau$.

The map $\tau$ is trivially an embedding, and we should prove $\tau(R(\Aff^N_{K^\circ}))=R(\Aff^{N,ad}_K)$. It follows by [BGR 2, 6.1.2 Corollary 3], which says that a rigid point of $\Aff^{N,ad}_K$ is correspondent to a maximal ideal $\mathfrak{q}\subset R^{ad}$ and $\mathfrak{q}\cap R$ which is still a maximal ideal of $R$. Finally, the commute diagram follows directly by how we construct $F$, $F^{ad}$ and $\tau$. 
\end{proof}
\begin{remark}
Lemma~\ref{tauFFtau} is still true when $K$ is not algebraic completed, and a closed point $\A$ in $\Aff^N_K=\text{Spec}(R)$ would not correspondent to a point like $(x_1,\ldots,x_N)$. For example, $\A=(X^2+1,Y-1)$, an ideal generated by the polynomials $X^2+1$ and $Y-1$, is a closed point of $\text{Spec}(\Q_3[X,Y])$ which is not correspondent to a point like $(x,y)\in \Q_3^2$. However, We can associate $\A$ with a Galois orbit of a point $\A'\in L^N$ where $L$ is the splitting fields of $\A$ over $K$, and the valuation $\tau(\A)$ is given by, for $f\in R^{ad}$, $|f(\A_i)|$ where $|\cdot|$ is the unique extended valuation on the splitting field over $K$. Note that the choose of $\A_i$ does not affect the valuation. Like the above example, the Galois orbit that represents $\A=(X^2+1,Y-1)$ is $\{(i,1),(-i,1)\}$, and so the valuation $\tau(\A)$ would be, for $f\in R^{ad}$, $|f(i,1)|=|f(-i,1)|$ where the valuation $|\cdot|$ is the one of the $\Q_3(i)$ that uniquely extended from the valuation on the $\Q_3$.
\end{remark}
\subsubsection{Inverse limit space and perfectoid space}
We define an inverse limit space as
\[
\varprojlim_{F^{ad}}\Aff^{N,ad}_K\cup\{\infty\}=\{(x_0,x_1,x_2,\ldots)\mid F^{ad}(x_{i+1})=x_i\in\Aff_K^{N,ad}\cup\{\infty\}\quad\forall i>0\}.
\]
We consider the inverse limit space as a subspace of an infinite product space equipped with the product topology, and we will prove that the inverse limit space is homeomorphic to a perfectoid space later.

Let us focus on this inverse limit space. Let $T:\inverselimit\to\inverselimit$ be
\[
T((x_0,x_1,x_2,\ldots))=(F^{ad}(x_0),x_0,x_1,x_2,\ldots),
\]
and let $\pi:\inverselimit\to\Aff^{N,ad}_K\cup\{\infty\}$ be
\[
\pi((x_0,x_1,x_2,\ldots))=x_0.
\]
We can easily see that 
\begin{equation}\label{eq:fpipit}
F^{ad}\circ \pi=\pi\circ T.
\end{equation}
\begin{lemma}\label{Lem:Integralover}
Let $F^{*}(X_i)=F_i(X_1,\ldots,X_N)$. Then, the Tate algebra $K\la X_1,\ldots,X_N\ra$ is integral over $K\la F^{*}(X_1),\ldots, F^{*}(X_N)\ra$.
\end{lemma}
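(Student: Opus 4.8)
The plan is to prove the stronger integral-model statement that $R^{ad,\circ}=K^\circ\langle X_1,\dots,X_N\rangle$ is a \emph{finite} module over $A_0:=K^\circ\langle F^*(X_1),\dots,F^*(X_N)\rangle$; inverting $\varpi$ then gives that $K\langle X_1,\dots,X_N\rangle=R^{ad,\circ}[\varpi^{-1}]$ is a finite module over $K\langle F^*(X_1),\dots,F^*(X_N)\rangle=A_0[\varpi^{-1}]$, and a finite ring extension is a fortiori integral. The whole argument is organized around the fixed $\varpi$: since $K=\C_p$ has a divisible (non-discrete) value group one cannot run a successive-approximation argument using ``Gauss norm $<1$'' as the small quantity, but one can using ``$\equiv 0 \pmod{\varpi}$'', because $|\varpi|$ is a constant bounded away from $1$.

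First I would reduce modulo $\varpi$. Put $\Lambda:=K^\circ/\varpi K^\circ$; since $|p|\le|\varpi|$ we have $p\in\varpi K^\circ$, so $\Lambda$ has characteristic $p$, and $R^{ad,\circ}/\varpi R^{ad,\circ}\cong\Lambda[X_1,\dots,X_N]$ because only finitely many coefficients of a convergent power series survive modulo $\varpi$. Under the transition map $Y_i\mapsto F_i(X)$ the reduction sends $Y_i$ to $\bar F_i$, and from $F_i\equiv (G_i)^p\pmod{\varpi}$ together with $\operatorname{char}\Lambda=p$ we get $\bar F_i=(\bar G_i)^p$ in $\Lambda[X]$; hence $\bar F$ factors as a morphism $\bar G^{(p)}$ (a Frobenius twist of $\bar G$, insensitive to finiteness since $k$ is perfect) followed by the finite $k$-Frobenius $X_i\mapsto X_i^p$ of $\Aff^N$. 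It therefore suffices to prove (i) that $\Lambda[X_1,\dots,X_N]$ is a finite module over $\Lambda[\bar F_1,\dots,\bar F_N]$, and then (ii) to lift this to $R^{ad,\circ}$.

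For (i), $\Lambda[X_1,\dots,X_N]$ is free of rank $p^N$ over $\Lambda[X_1^p,\dots,X_N^p]$, so after the substitution $Z_i=X_i^p$ it is enough that $\Lambda[Z_1,\dots,Z_N]$ be finite over $\Lambda[g_1(Z),\dots,g_N(Z)]$ with $g:=\bar G^{(p)}$. Over the residue field $k=\Lambda/\mathfrak{m}_\Lambda$ (which is perfect) this holds because the restricted hypotheses on $F$ — that $F$ maps $\Aff_K^N\setminus\Aff_{K^\circ}^N$ into itself and strictly increases $\|\cdot\|$ there, i.e. ${F^{ad}}^{-1}(\infty)=\{\infty\}$ — force $\bar F$, hence $\bar G$ and $g$, to be a \emph{finite} morphism of $\Aff^N_k$, so each $\bar Z_i$ is integral over $k[g(Z)]$. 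I would lift the finitely many coefficients of these monic relations to $\Lambda$, let $\mathfrak{a}\subseteq\Lambda$ be the ideal they generate together with the coefficients by which the lifted relations fail to hold exactly, and observe that $\mathfrak{a}$ is finitely generated and contained in $\mathfrak{m}_\Lambda$ — all of whose elements are nilpotent — so $\mathfrak{a}$ is a \emph{nilpotent} ideal; modulo $\mathfrak{a}$ the lifted relations are exact monic relations, whence $(\Lambda/\mathfrak{a})[Z]$ is generated over $(\Lambda/\mathfrak{a})[g(Z)]$ by the finitely many monomials $\prod_i Z_i^{a_i}$ with $a_i<\deg_{Z_i}$, and nilpotent Nakayama upgrades this to $\Lambda[Z]$ finite over $\Lambda[g(Z)]$. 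For (ii), fixing monomials $e_1=1,e_2,\dots,e_r\in R^{ad,\circ}$ whose reductions generate $\Lambda[X]$ over $\Lambda[\bar F]$, any $b\in R^{ad,\circ}$ admits $a^{(0)}_j\in A_0$ with $|a^{(0)}_j|\le 1$ and $b-\sum_j a^{(0)}_je_j\in\varpi R^{ad,\circ}$ (reduce mod $\varpi$ and lift the coordinates of a $\Lambda[\bar F]$-linear expression); writing the remainder as $\varpi b_1$ with $b_1\in R^{ad,\circ}$ and iterating produces convergent series $a_j=\sum_{n\ge 0}\varpi^n a^{(n)}_j\in A_0$ (convergence since $A_0$ is $\varpi$-adically complete) with $b=\sum_j a_je_j$, so $R^{ad,\circ}=\sum_{j=1}^r A_0 e_j$ and we are done after inverting $\varpi$.

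The step I expect to be the main obstacle is the input to (i): extracting from the essentially \emph{valuative} restricted conditions on $F$ the \emph{scheme-theoretic} fact that $\bar F$ (equivalently $\bar G$) is a finite morphism of $\Aff^N_k$. This is the only place restrictedness is used — and it is genuinely needed, since $K\langle X\rangle$ is not integral over $K\langle F^*(X)\rangle$ for a general lift of $p$-th power such as $F=(X_1^p,0,\dots,0)$. The point is that a polynomial self-map of $\Aff^N$ which is not finite must contract some unbounded closed subset (a positive-dimensional fibre, or a family of fibres whose points escape to infinity while their images stay bounded) toward a bounded set, and this is exactly what ${F^{ad}}^{-1}(\infty)=\{\infty\}$, i.e. $\|F(x)\|>\|x\|$ for $\|x\|>1$, forbids; I would formalize this via the valuative criterion of properness applied to the generic and boundary fibres. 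Everything else is standard reduction theory and $\varpi$-adic approximation, delicate only because of the non-discreteness of $\C_p$, which is precisely why one must work relative to the fixed $\varpi$ rather than a uniformizer.
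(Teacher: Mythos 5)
First, a remark on the comparison itself: the paper states Lemma~\ref{Lem:Integralover} without any proof (it is invoked immediately in the proof of Lemma~\ref{Lem:rigidtorigid}), so there is no argument of the paper to measure yours against; your proposal has to stand on its own. Its architecture is sound and is essentially the standard mechanism behind the BGR-type principle ``if the canonical reduction of an affinoid map is integral, the map is finite'': reduce modulo $\varpi$, prove finiteness of $\Lambda[X_1,\ldots,X_N]$ over $\Lambda[\bar F_1,\ldots,\bar F_N]$, and lift by $\varpi$-adic successive approximation. Your steps (i) and (ii) -- freeness of $\Lambda[X]$ over $\Lambda[X^p]$, the nilpotent-Nakayama lift from $k$ to $\Lambda$ (every element of $\mathfrak m_\Lambda$ is indeed nilpotent), and the approximation argument (best run inside $K^\circ\la Y_1,\ldots,Y_N\ra$ and pushed to its image, so as not to worry about completeness of $A_0$) -- are fine, and you are right that the restricted hypothesis must enter somewhere, as your example $(X_1^p,0,\ldots,0)$ shows.

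The genuine gap is exactly at the step you yourself flag as the main obstacle: the claim that restrictedness forces $\bar F:\Aff^N_k\to\Aff^N_k$ to be a finite morphism is supported only by the heuristic that a non-finite self-map ``contracts an unbounded set'' and that this is ``exactly what $\|F(x)\|>\|x\|$ forbids,'' to be formalized ``via the valuative criterion of properness.'' But the two conditions live on different fibres: non-finiteness of $\bar F$ concerns the behaviour at infinity of the special fibre over $k$, where there is no norm, while the restricted hypothesis constrains only classical points of the generic fibre of norm $>1$; the valuative criterion does not bridge the two by itself, and the degeneration locus of $\bar F$ need not be visible to coordinate-wise norm considerations (e.g.\ a reduction such as $(x^p,(y(1+x))^p)$ degenerates only along $x=-1$). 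One really has to lift the degeneration: take a divisorial valuation $v$ on $k(X_1,\ldots,X_N)$, trivial on $k$, with $v\ge 0$ on $k[\bar F_1,\ldots,\bar F_N]$ and $v(X_j)<0$; realize it by a branch at infinity of a curve, lift a sufficiently long truncation of its Laurent parametrization coefficientwise (Teichm\"uller representatives) to $K^\circ$, and evaluate at $\sigma\in K$ with $|\sigma|<1$; this produces classical points $x$ with $\|x\|=|\sigma|^{-m}>1$ and $\|F(x)\|\le\max\{1,\ r|\sigma|^{-D}\}$, where $r<1$ bounds the finitely many coefficients of absolute value $<1$ that occur, and for $|\sigma|$ close enough to $1$ this is $\le 1<\|x\|$, contradicting restrictedness. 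Some argument of this kind is the actual mathematical content of the lemma; as written, your proof reduces the statement to this unproved (though, I believe, true and correctly identified) claim.
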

\begin{lemma}\label{Lem:rigidtorigid}
A point $x$ in $\Aff^{N,ad}_K$ is rigid if and only if $F^{ad}(x)$ is rigid.
\end{lemma}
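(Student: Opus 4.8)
The plan is to reduce the statement to the standard fact that, in an integral ring extension, a prime ideal of the larger ring is maximal if and only if its contraction to the subring is maximal. Recall from the definition of adic spaces that a point $y\in\Aff^{N,ad}_K$ is rigid exactly when its support $\mathrm{Supp}(y):=\{f\in R^{ad}\mid |f(y)|=0\}$ is a maximal ideal of $R^{ad}=K\la X_1,\ldots,X_N\ra$, and that the support depends only on the equivalence class of $y$, so that ``$y$ rigid'' is well defined on $\Aff^{N,ad}_K$. Since $F^{ad}$ is induced by the pullback $F^{*}(X_i)=F_i$, we have $|f(F^{ad}(x))|=|F^{*}(f)(x)|$ for all $f\in R^{ad}$, and therefore
\[
\mathrm{Supp}\bigl(F^{ad}(x)\bigr)=(F^{*})^{-1}\bigl(\mathrm{Supp}(x)\bigr).
\]
Thus it suffices to show: a prime ideal $\mathfrak{q}\subset R^{ad}$ is maximal if and only if $(F^{*})^{-1}(\mathfrak{q})$ is maximal. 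First I would record that $F^{*}$ is a well-defined continuous $K$-algebra endomorphism of $R^{ad}$ (each $F_i\in R^\circ=K^\circ[X_1,\ldots,X_N]$ has Gauss norm $\le 1$, so $\sum a_IX^I\mapsto\sum a_IF^I$ converges), and that it maps $R^{ad,\circ}$ into itself, so that $F^{ad}$ indeed preserves $\Aff^{N,ad}_K$.

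Next I would invoke Lemma~\ref{Lem:Integralover}: the ring $R^{ad}$ is integral over its subring $S:=K\la F^{*}(X_1),\ldots,F^{*}(X_N)\ra$, which is exactly the image $F^{*}(R^{ad})$ (the continuous map $F^{*}$ surjects $R^{ad}$ onto the closed subalgebra topologically generated by $F_1,\ldots,F_N$). Applying the standard integral-extension criterion to $S\hookrightarrow R^{ad}$ (e.g. Atiyah--Macdonald, Ch.~5): a prime $\mathfrak{q}\subset R^{ad}$ is maximal iff $\mathfrak{q}\cap S$ is maximal in $S$. Finally, $F^{*}$ induces an isomorphism $R^{ad}/(F^{*})^{-1}(\mathfrak{q})\xrightarrow{\ \sim\ }S/(\mathfrak{q}\cap S)$, so $(F^{*})^{-1}(\mathfrak{q})$ is maximal in $R^{ad}$ iff $\mathfrak{q}\cap S$ is maximal in $S$. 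Chaining these equivalences proves the lemma. (Concretely, both implications follow from the elementary facts that if $A\subseteq B$ is integral with $B$ a field then $A$ is a field, and if $A$ is a field and $B$ a domain then $B$ is a field.)

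The only real content is Lemma~\ref{Lem:Integralover} — that is where the special shape of a (restricted) lift of $p$th power actually gets used; granting it, the argument above is purely formal. The steps I would double-check are: that $\mathrm{Supp}$ is constant on equivalence classes (so ``rigid'' descends to $\Aff^{N,ad}_K$); that $F^{*}(R^{ad})$ genuinely coincides with the Tate subalgebra $K\la F_1,\ldots,F_N\ra$ appearing in Lemma~\ref{Lem:Integralover}, rather than a dense non-closed subring; and that the integral-extension criterion is transported back along $F^{*}$ correctly — this last point still works if $F^{*}$ fails to be injective, since only the quotient rings, not $F^{*}$ itself, enter the maximality statement. Note that the ``restricted'' hypothesis $\|F(X)\|>\|X\|$ plays no role here; it is only needed to pin down the point $\infty$, which is not a rigid point of $\Aff^{N,ad}_K$.
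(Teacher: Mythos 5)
Your proof is correct and follows essentially the same route as the paper's: both identify rigidity of $x$ and of $F^{ad}(x)$ with maximality of $\mathrm{Supp}(x)$ and of $(F^{*})^{-1}(\mathrm{Supp}(x))$, pass to the image subring $F^{*}(R^{ad})$ via the first isomorphism theorem (the paper writes this as $F^{*}M_{F^{ad}(x)}=M_x\cap F^{*}R$), and then apply Lemma~\ref{Lem:Integralover} together with the standard fact that in an integral extension a prime is maximal iff its contraction is. Your added care about supports being prime, well-definedness on equivalence classes, and the image being the closed subalgebra $K\la F_1,\ldots,F_N\ra$ only makes explicit points the paper leaves implicit.
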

\begin{proof}
We denote $M_x\coloneqq  \{f\mid |f(x)|=0\}\subseteq K\la X_1,\ldots, X_N\ra=R$. If $x$ is rigid, then $M_x$ is a maximal ideal. We observe that $F^*M_{F(x)}$ is $M_{x}\cap F^*R$, so $F^*M_{F(x)}$ is a maximal ideal of $F^*R$. Hence, the $M_{F(x)}$, the preimagine of $F^*M_{F(x)}$, is a maximal ideal of $R$, i.e. $F(X)$ is rigid. 

On the other hand, let $y=F(x)$, and assume $M_{y}$ is a maximal ideal of $R$, which implies $F^*M_{y}$ is a maximal ideal of $F^*R$, and, therefore, $M_x$ is an ideal with $M_x\cap F^*R=F^*M_{y}$. By Lemma~\ref{Lem:Integralover}, $M_x$ is a maximal ideal of $R$, which shows that $x$ is rigid.
\end{proof}

Because of this lemma, we will define the rigid point on the $\inverselimit$ as the following.
\begin{definition}
We say that a point in $\inverselimit$ is rigid if any coordinate, hence all coordinates, is rigid in $\Aff_K^{N,ad}$. 
\end{definition}
It would then be comfortable to say that the rigid points on the $\inverselimit$ projects to the rigid points on the adic space $\Aff^{N,ad}_K$, and the map is only surjective in general.

Now, we are going to construct a perfectoid space, and to do so, the first step is given a perfectoid $K$-algebraic $R^{perf}$ which is defined by the following way. Let $R^n=K\la X^{(n)}_1,\ldots,X^{(n)}_N\ra$ where $X^{(n)}_i$ are just some transcendental elements over $K$, and let $F_n:R^n\to R^{n+1}$ give by, for each $i$,
\[
F_n(X^{(n)}_i)=F_i(X_1^{(n+1)},\ldots,X_N^{(n+1)}).
\]
Then, we consider the following chain
\begin{center}
\begin{tikzcd}
R^0 \arrow[r,"F_0"] & R^1 \arrow [r, "F_1"] & R^2 \arrow[r, "F_2"] & \cdots \arrow[r,"F_{n-1}"]& R^{n}\arrow[r,"F_{n}"] & R^{n+1}\arrow[r,"F_{n+1}"] & \cdots
\end{tikzcd}
\end{center}
and let $R^{perf}=\widehat{\varinjlim_n R^n}$. Sometimes, it is convenient to write $R^{perf}$ as $K\la X_1^{(\infty)},\ldots,X_N^{(\infty)}\ra$.

Intuitively, the direct limit $\varinjlim_n R^n$ is an infinite algebraic extension of $R^0$ which is not necessary completed under the extended norm, a trivial example would be like $\bar{Q_p}$, so we complete the infinitely extended $K$-algebra to form a Banach $K$-algebra. Our next step is to show that this Banach $K$-algebra is the perfectoid $K$-algebra we need.
\begin{lemma}
The $R^{perf}$ is a perfectoid $K$-algebra with the set of bounded element
\[
R^{perf,\circ}=K^{\circ}\la X_1^{(\infty)},\ldots,X_N^{(\infty)}\ra
\]
\end{lemma}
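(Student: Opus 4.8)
\section*{Proof proposal}

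The plan is to verify the three defining properties of a perfectoid $K$-algebra for $R^{perf}$ and simultaneously to read off its ring of power-bounded elements. First I would establish that each transition map $F_n\colon R^n\to R^{n+1}$, $X_i^{(n)}\mapsto F_i(X_1^{(n+1)},\dots,X_N^{(n+1)})$, is an isometric embedding for the Gauss norms. The inequality $|F_n(f)|\le|f|$ is immediate, since $F_i\in R^\circ$ forces $F_n(R^{n,\circ})\subseteq R^{n+1,\circ}$. For the reverse inequality I would reduce modulo $K^{\circ\circ}$: by Lemma~\ref{Lem:Integralover} together with a Krull-dimension count (a nonzero kernel would make the image a domain of dimension $<N$, contradicting that $R^{n+1}$ is integral over it and has dimension $N$), the map $F_n$ is injective, and reducing mod $K^{\circ\circ}$ shows $\bar F^\ast\colon k[X_1,\dots,X_N]\to k[X_1,\dots,X_N]$ is injective as well — this is where the "restricted lift of $p$-th power" hypothesis does its work, forcing $\bar F$ to be dominant. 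Hence if $f\in R^{n,\circ}$ with $|f|=1$, then $\overline{F_n(f)}=\bar F^\ast(\bar f)\ne0$, i.e.\ $|F_n(f)|=1=|f|$; since the Gauss norm of any element of $K\la X_1,\dots,X_N\ra$ lies in $|K^\times|\cup\{0\}$, scaling by $K^\times$ upgrades this to $|F_n(f)|=|f|$ for all $f$.

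Consequently $\varinjlim_n R^n$ carries an unambiguous multiplicative norm, and $R^{perf}=\widehat{\varinjlim_n R^n}$ is a Banach $K$-algebra; it is moreover a domain, being the completion of a domain under a multiplicative norm, so that norm stays multiplicative on $R^{perf}$. The unit ball $\{f\in R^{perf}\mid |f|\le1\}$ is open (it is a union of open balls around its norm-$1$ points) and bounded (multiplication by $\varpi^k$ pushes it into any neighbourhood of $0$), which is exactly what the perfectoid axiom demands of $R^{perf,\circ}$. Because the norm is multiplicative, an element is power-bounded if and only if it lies in the unit ball, so $R^{perf,\circ}=\{f\mid|f|\le1\}$; and this set is the closure in $R^{perf}$ of the unit ball of $\varinjlim_n R^n$, which by the isometry statement equals $\varinjlim_n R^{n,\circ}=\varinjlim_n K^\circ\la X_1^{(n)},\dots,X_N^{(n)}\ra$. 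Its completion is by definition $K^\circ\la X_1^{(\infty)},\dots,X_N^{(\infty)}\ra$, giving the asserted description.

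It remains to prove the crucial axiom, that $\Phi\colon R^{perf,\circ}/\varpi\to R^{perf,\circ}/\varpi$ is surjective. Given $g\in R^{perf,\circ}$, density of $\varinjlim_n R^{n,\circ}$ lets me assume, at the cost of an error of norm $\le|\varpi|$, that $g$ is a \emph{polynomial} $g=\sum_I a_I(X^{(n)})^I$ with $a_I\in K^\circ$, the sum finite, for some $n$. Viewed inside $R^{n+1,\circ}$ and using $F_i(X^{(n+1)})\equiv G_i(X^{(n+1)})^p\bmod\varpi$ (valid because $\binom{p}{k}$ is divisible by $p$ and $pK^\circ\subseteq\varpi K^\circ$), each monomial satisfies $(X^{(n)})^I\equiv\bigl(\prod_{l=1}^N G_l(X^{(n+1)})^{i_l}\bigr)^p\bmod\varpi$, writing $I=(i_1,\dots,i_N)$. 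Since $K=\C_p$ is a perfectoid field, $\Phi$ is surjective on $K^\circ/p$, hence on the quotient $K^\circ/\varpi$, so I may pick $a_I'\in K^\circ$ with $(a_I')^p\equiv a_I\bmod\varpi$. Then the finite sum $H\coloneqq\sum_I a_I'\prod_{l=1}^N G_l(X_1^{(n+1)},\dots,X_N^{(n+1)})^{i_l}\in R^{n+1,\circ}\subseteq R^{perf,\circ}$ satisfies $g\equiv H^p\bmod\varpi$, the last step again by the congruence $(\sum c_k)^p\equiv\sum c_k^p\bmod p$ applied in $R^{n+1,\circ}$. This proves $\Phi$ is surjective modulo $\varpi$, completing the verification that $R^{perf}$ is a perfectoid $K$-algebra.

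The step I expect to be the main obstacle is the isometry of the transition maps — precisely, extracting from the restricted-lift hypothesis and Lemma~\ref{Lem:Integralover} that $\bar F^\ast$ is injective and that the Gauss norm is genuinely compatible all the way up the tower. Everything downstream (the identification of $R^{perf,\circ}$ and the Frobenius surjectivity) is then a routine unwinding of definitions, the only point requiring care being the direction of divisibility between $p$ and $\varpi$, which makes $K^\circ/\varpi$ a quotient of $K^\circ/p$ rather than the other way around.
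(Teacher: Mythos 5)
Your overall route is the same as the paper's: identify $R^{perf,\circ}$ with the unit ball $K^{\circ}\la X_1^{(\infty)},\ldots,X_N^{(\infty)}\ra$ using multiplicativity of the Gauss norm (the paper runs a Cauchy-sequence argument, you say ``power-bounded iff norm $\le 1$'' -- same mechanism), note openness and boundedness, and deduce surjectivity of $\Phi$ on $R^{perf,\circ}/\varpi$ from the congruence $X_i^{(n)}\equiv G_i(X_1^{(n+1)},\ldots,X_N^{(n+1)})^p\bmod\varpi$. Your treatment of the coefficients (surjectivity of Frobenius on $K^\circ/\varpi$ since $p\in\varpi K^\circ$, the freshman's dream, approximation by a finite sum at a finite stage) correctly fills in details the paper compresses into one line.

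The gap is in your isometry step, which you yourself flag as the main obstacle. Lemma~\ref{Lem:Integralover} plus a dimension count does give injectivity of $F_n$ on $K\la X_1,\ldots,X_N\ra$, but ``reducing mod $K^{\circ\circ}$'' does not transfer injectivity to $\bar F^{*}$ on $k[X_1,\ldots,X_N]$: reduction can destroy injectivity (e.g.\ $X_1\mapsto X_1$, $X_2\mapsto X_1+pX_2$ is injective while its reduction is not), so the dominance of $\bar F$ is precisely the point that needs proof, and you give no argument for it. It is true for a restricted lift, but it requires a genuine argument, for instance by contraposition: if $\bar G_1,\ldots,\bar G_N$ were algebraically dependent, some fiber of $\bar G$ over a $k$-point would have positive dimension and hence a branch at infinity; truncating a Laurent-series parametrization of that branch, lifting its coefficients to $K^{\circ}$ and evaluating at $\sigma\in K$ with $|\sigma|<1$ close to $1$ produces rigid points $x$ with $\|x\|>1$ arbitrarily close to $1$ and $\max_i|G_i(x)|\le 1$, whence $\|F(x)\|\le\max\{1,|\varpi|\,\|x\|^{d}\}<\|x\|$ (with $d$ bounding the degrees of the $f_i$), contradicting restrictedness. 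Alternatively one needs an integral-model version of Lemma~\ref{Lem:Integralover}, over $K^{\circ}$ rather than $K$. For perspective: the paper never proves this compatibility either -- it uses it silently when it deduces $f_n-f_M\in R^{n,\circ}$ from $\|f_n-f_M\|<1$ and when it writes $\|f_n^m\|=\|f_n\|^m$ -- so you have correctly isolated a real subtlety of the construction; but as written your justification of it is a non sequitur, and until the dominance of $\bar F$ is established the identification of $R^{perf,\circ}$ is incomplete.
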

\begin{proof}
We need first find out what the set of power bounded element is, and it is obvious that
\[
K^\circ\la X_1^{(\infty)},\ldots,X_N^{(\infty)}\ra\subseteq R^{perf,\circ}.
\]
Conversely, given a power series $f\in R^{perf,\circ}$, we can approach $f$ by a sequence of power series $\{f_n\}$ with $f_n\in R^n$. Then, since $\{f_n\}$ is Cauchy, there exists an integer $M$ such that $\|f_n-f_M\|<1$ for all $n\geq M$. It implies $f_n-f_M\in R^{(n),\circ}=K^\circ\la X^{(n)}_1,\ldots,X^{(n)}_N\ra$ for all $n\geq M$. Thus, $f-f_M\in\widehat{\varinjlim_n R^{n,\circ}}$. If $\|f_M\|\leq 1$, then $\|f\|\leq 1$. If $\|f_M\|>1$, $\|f_n-f_M\|\leq 1$ implies $\|f_n\|=\|f_M\|$ for all $n\geq M$. Thus, $\|f\|=\|f_M\|$. Similarly, since $f_n^m$ converges to $f^m$ for any integer $m$ and $\|f_n^m\|=\|f_n\|^m=\|f_M\|^m=\|f_M^m\|$ for all $n\geq M$, we have $\|f^m\|=\|f_M^m\|\to \infty$ as $m\to\infty$. Thus, $f$ is not bounded. Therefore, we can conclude that
\[
R^{perf,\circ}=K^{\circ}\la X_1^{(\infty)},\ldots, X_N^{(\infty)}\ra
\]
and also conclude that $R^{perf,\circ}$ is open and bounded.

Next, we should show that the Frobenius map  $\Phi:R^{perf,\circ}/\varpi\to R^{perf,\circ}/\varpi$ is surjective. It is because, for every $n\geq 0$, we have
\[
X_i^{(n)}\e (G_i(X_1^{(n+1)},\ldots,X_N^{(n+1)}))^p\mod \varpi.
\]
\end{proof}

Hence, it then is natural to define $\Aff^{N,perf}_K$ as the $Spa(R^{perf},R^{perf,\circ})$, and we finally can give a homeomorphism $\varphi:\Aff^{N,perf}_K\cup\{\infty\}\to \inverselimit$ to complete our claim.
We first note that, for any $f\in R^n$ and a valuation $\A\in\Aff^{N,perf}_K$, we can naturally take $|f(\A)|$ since $R^n$ has a natural embedding into $R^{perf}$ for all $n$. Let us denote this map as $\varphi^{(n)}: \Aff_{K}^{N,perf}\to \Aff_{K}^{N,ad}$. Beside, it is not hard to check $F^{ad}\circ\varphi^{(n+1)}=\varphi^{(n)}$ for any $n\geq 0$. Thus, we will define the "homeomorphism" $\varphi(\A)\coloneqq  (\varphi^{(0)}(\A),\varphi^{(1)}(\A),\varphi^{(2)}(\A),\ldots)\in\inverselimit$ for $\alpha\in\Aff^{N,perf}_K$ and simply define $\varphi(\infty)=(\infty,\infty,\ldots)$. The following lemma proves our claim.
\begin{lemma}\label{Lem:phihom}
The map $\varphi:\Aff^{N,perf}_K\cup\{\infty\}\to\inverselimit$ is homeomorphic.
\end{lemma}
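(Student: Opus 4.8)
The plan is to exhibit an explicit continuous inverse to $\varphi$ and then use a compactness/separatedness argument to upgrade the continuous bijection to a homeomorphism. First I would construct the inverse map $\psi:\varprojlim_{F^{ad}}\Aff^{N,ad}_K\cup\{\infty\}\to\Aff^{N,perf}_K\cup\{\infty\}$. Given a compatible sequence $(x_0,x_1,x_2,\ldots)$ with $F^{ad}(x_{i+1})=x_i$, I define a valuation on $R^{perf}=\widehat{\varinjlim_n R^n}$ by declaring, for $f\in R^n\subset R^{perf}$, that $|f(\psi((x_i)))|\coloneqq|f(x_n)|$. This is well defined because the transition maps $F_n:R^n\to R^{n+1}$ are compatible with the relations $F^{ad}(x_{n+1})=x_n$: indeed for $f\in R^n$ viewed inside $R^{n+1}$ via $F_n$, one has $|F_n(f)(x_{n+1})|=|f(F_1(x_{n+1}),\ldots,F_N(x_{n+1}))|=|f(x_n)|$ by the very definition of $F^{ad}$. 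One checks the valuation axioms coordinatewise and then extends continuously to the completion $R^{perf}$; the resulting valuation sends $R^{perf,\circ}=K^\circ\la X_1^{(\infty)},\ldots,X_N^{(\infty)}\ra$ into the closed unit disc because each $|f(x_n)|\le 1$ for $f\in R^{n,\circ}$, so $\psi((x_i))\in\Spa(R^{perf},R^{perf,\circ})=\Aff^{N,perf}_K$. We set $\psi((\infty,\infty,\ldots))=\infty$.

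Next I would verify $\psi\circ\varphi=\mathrm{id}$ and $\varphi\circ\psi=\mathrm{id}$, which is immediate from the constructions: $\varphi(\A)$ has $n$-th coordinate $\varphi^{(n)}(\A)$ given by restricting $\A$ to $R^n$, and feeding this back through $\psi$ recovers $\A$ on each $R^n$, hence on the dense subalgebra $\varinjlim_n R^n$, hence on all of $R^{perf}$ by continuity; the other composition is checked the same way. So $\varphi$ is a bijection. For continuity of $\varphi$ I would use that the product topology on the inverse limit is generated by the preimages under the projections $\mathrm{pr}_n$ of rational subsets $U(\frac{f_1,\ldots,f_r}{g})\subset\Aff^{N,ad}_K$, and $\mathrm{pr}_n\circ\varphi=\varphi^{(n)}$ is continuous because it is the map on adic spaces induced by the inclusion of Tate algebras $R^n\hookrightarrow R^{perf}$; pulling back a rational subset of $\Aff^{N,ad}_K$ gives a rational, hence open, subset of $\Aff^{N,perf}_K$. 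Continuity of $\psi$ I would get analogously: a basic open rational subset of $\Aff^{N,perf}_K$ is defined by finitely many elements $f_i,g\in R^{perf}$, which can be approximated by elements of some $R^n$, and on that approximation the condition only involves the $n$-th coordinate, so the preimage under $\psi$ is open in the product topology; the approximation step requires a small estimate that a valuation inequality is stable under replacing $f_i,g$ by close-enough elements, which is exactly the kind of argument used for the type-$2$ valuation continuity in the Berkovich example earlier in the paper.

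The main obstacle I expect is the continuity of $\psi$, specifically handling the completion: rational subsets of $\Aff^{N,perf}_K$ are a priori defined using arbitrary elements of the Banach algebra $R^{perf}$, not just elements of the dense subring $\varinjlim_n R^n$, so I need to argue carefully that it suffices to check openness on a cofinal system of rational subsets whose defining data come from some finite level $R^n$. This uses two facts: that rational subsets defined by data from $\varinjlim_n R^n$ already form a basis (since $\varinjlim_n R^n$ is dense and $R^{perf,\circ}$ is open and bounded, a standard approximation argument replaces any defining tuple by a nearby one from a finite level without changing the rational subset), and that for a fixed level $n$ the map $\psi$ factors through $\mathrm{pr}_n$ followed by a map of adic spaces, whose continuity is built in. Once this reduction is in place the argument closes. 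Alternatively, if one prefers to avoid the approximation bookkeeping, one can note that $\Aff^{N,perf}_K\cup\{\infty\}$ is quasi-compact (being the one-point-type compactification of an affinoid adic space, or by Tychonoff applied to the inverse limit side) and the target is Hausdorff, so the continuous bijection $\varphi$ is automatically a homeomorphism; I would present the explicit $\psi$ first and fall back on this compactness argument to finish.
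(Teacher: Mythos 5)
Your main construction of the inverse map is essentially the same as the paper's: define the valuation coordinatewise by $|f(\psi((x_i)))|\coloneqq|f(x_n)|$ for $f$ living at level $n$, check it is a valuation, and reduce continuity of both maps to the observation that rational subsets can be described by data from a finite level $R^n$. The one real difference is how the completion is handled. You flag this yourself as the main obstacle: a priori, rational subsets of $\Aff^{N,perf}_K=\Spa(\widehat{\varinjlim R^n},\widehat{\varinjlim R^{n,\circ}})$ are cut out by elements of the Banach completion, not of the dense subring $\varinjlim R^n$, and one needs to know that replacing the defining tuple by a nearby tuple from a finite level does not change the rational subset. The paper dispenses with this bookkeeping in one stroke by citing Xie's Proposition~2.2 (equivalently Huber's Proposition~3.9), which says precisely that the canonical map $\mu:\Spa(\widehat{\varinjlim R^n},\widehat{\varinjlim R^{n,\circ}})\to\Spa(\varinjlim R^n,\varinjlim R^{n,\circ})$ is a homeomorphism; the paper then only has to exhibit the inverse $\theta$ at the level of the uncompleted colimit, where rational data is tautologically at finite level. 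Your approximation argument, carried out carefully, reproves a special case of that cited result, so this is a difference of packaging rather than of substance.

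However, your proposed fallback is incorrect and should be dropped. The ``continuous bijection from quasi-compact to Hausdorff is a homeomorphism'' argument requires the target to be Hausdorff, and neither $\Aff^{N,ad}_K$ nor the inverse limit $\varprojlim_{F^{ad}}\Aff^{N,ad}_K\cup\{\infty\}$ is Hausdorff: affinoid adic spectra are spectral spaces with nontrivial specializations (for instance, the rank-$2$ valuation $x_2$ of the paper's second example specializes to the Gauss point), and a product or inverse limit of non-Hausdorff spaces remains non-Hausdorff. The correct general-topology ingredient in this setting would be a statement about spectral spaces and spectral maps, not Hausdorff compactness, and that would take you right back to Huber's machinery. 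So keep the explicit inverse and either cite the Huber/Xie result for the completion step, as the paper does, or write out the approximation estimate in full.
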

\begin{proof}
We will construct an inverse map of $\varphi$ to show bijection. First, by Proposition 2.2 of \cite{Xie2016}(also see \cite{Huber1993}*{Proposition~3.9}), we have a natural morphism $\mu:\Aff^{N,perf}_K\to Spa(\varinjlim_n R^n,\varinjlim_n R^{n,\circ})$ which is a homeomorphism. Again, we will define $\mu^{-1}(\infty)=\{\infty\}$ to extend our map to the points in the infinity. To simplify the notations, we denote $\varinjlim_nR^n=B$ and $\varinjlim_nR^{n,\circ}=B^\circ$. Next, We let $\theta:\inverselimit\to Spa(B,B^\circ)\cup\{\infty\}$ by taking the "limit" of a point $(x_0,x_1,x_2,\ldots)\in \inverselimit$ and we denote $\theta((x_0,x_1,x_2,\ldots))=\lim_{n\to\infty}x_n$. We should show that this limit is a valuation of $B$ that has values less than one on $B^{\circ}$. More precisely, we take an $f\in B$, and we should note that the $f$ can be defined in $R^m$ for some $m$ by the definition of direct limit. Then, we define $f$ taking value at $\lim_{n\to\infty} x_n$ as
\[
|f(x_m)|
\]
This makes sense since $|f(x_m)|=|f(x_l)|$ for any $l\geq m$. By abusing the notation, we have
\[
|f(\lim_{n\to\infty} x_n)|=\lim_{n\to\infty} |f(x_n)|.
\]
It is easy to check that $\lim_{n\to\infty} x_n$ is a valuation. Moreover, as $f\in B^{\circ}$, we have
\[
|f(\lim_{n\to\infty} x_n)|=\lim_{n\to\infty}|f(x_n)|\leq 1.
\]
It is then easy to check that $\varphi\circ\mu^{-1}\circ\theta=id$ and $ \mu^{-1}\circ\theta\circ \varphi=id$. For continuity and open, since all the rational subset of $Spa(B,B^\circ)$ are defined over some $R^n$, it is easy to check that $\theta$ is continuous and open. Therefore, we can conclude that $\varphi$ is homeomorphic.
\end{proof}
We should give a map $F^{perf}:\Aff^{N,perf}_K\cup\{\infty\}\to\Aff^{N,perf}_K\cup\{\infty\}$ for the purpose that we want the following diagram commute.
\begin{center}
\begin{tikzcd}
\inverselimit&\arrow{l}{\varphi}\Aff_K^{N,perf}\cup\{\infty\}\\
\inverselimit\arrow{u}{T}&\arrow{l}{\varphi}\arrow{u}{F^{perf}}\Aff_k^{N,perf}\cup\{\infty\}
\end{tikzcd}
\end{center}
We define $F^{perf}$ as a map induced by $X_i^{(0)}\mapsto F_i(X_1^{(0)},\ldots, X_N^{(0)})$ and $X_i^{(n)}\mapsto X_i^{(n-1)}$ for all $n>0$ and for all $i$. We can easily check that this map satisfies our purpose and conclude these as the following lemma.
\begin{lemma}\label{FphiphiT}
Let $F^{perf}$, $T$ and $\varphi$ be as what we define above. Then, we have $F^{perf}\circ \varphi=\varphi\circ T$. Moreover, we have $\varphi(R(\Aff^{N,perf}_K\cup\{\infty\}))=R(\inverselimit)$.
\end{lemma}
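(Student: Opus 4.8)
The plan is to treat the two assertions separately: the intertwining identity is a bookkeeping computation with valuations, and the statement about rigid points reduces, level by level, to Lemma~\ref{Lem:rigidtorigid}.

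I read the first assertion as the commutativity of the displayed square, i.e. $\varphi\circ F^{perf}=T\circ\varphi$ ($\varphi$ being a homeomorphism, this is the same data as the stated identity). First I would check that the ring map $\Psi$ with $\Psi(X_i^{(0)})=F_i(X_1^{(0)},\ldots,X_N^{(0)})$ and $\Psi(X_i^{(n)})=X_i^{(n-1)}$ for $n>0$ respects the defining relations $X_i^{(n)}=F_i(X_1^{(n+1)},\ldots,X_N^{(n+1)})$ of $\varinjlim_n R^n$ and is continuous, hence descends to $R^{perf}=\widehat{\varinjlim_n R^n}$ and induces $F^{perf}$ on $\Aff^{N,perf}_K$ by $F^{perf}(\A)=\A\circ\Psi$. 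Then, writing $\varphi(\A)=(\varphi^{(0)}(\A),\varphi^{(1)}(\A),\ldots)$ with $\varphi^{(n)}(\A)$ the restriction of the valuation $\A$ to $R^n\cong R^{ad}$ (via $X_i^{(n)}\leftrightarrow X_i$), for $g\in R^{ad}$ I compute $|g(\varphi^{(n)}(F^{perf}(\A)))|=|\Psi(g(X_1^{(n)},\ldots,X_N^{(n)}))(\A)|$; for $n\geq 1$ this equals $|g(X_1^{(n-1)},\ldots,X_N^{(n-1)})(\A)|=|g(\varphi^{(n-1)}(\A))|$, and for $n=0$ it equals $|g(F_1(X_1^{(0)},\ldots,X_N^{(0)}),\ldots)(\A)|=|F^*(g)(\varphi^{(0)}(\A))|=|g(F^{ad}(\varphi^{(0)}(\A)))|$. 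Since $T(\varphi(\A))=(F^{ad}(\varphi^{(0)}(\A)),\varphi^{(0)}(\A),\varphi^{(1)}(\A),\ldots)$, this is exactly $\varphi(F^{perf}(\A))=T(\varphi(\A))$; the case $\A=\infty$ follows from $\varphi(\infty)=(\infty,\infty,\ldots)$ and $F^{ad}(\infty)=F^{perf}(\infty)=\infty$.

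For the second assertion, $\infty$ corresponds to $(\infty,\infty,\ldots)$ on both sides, so I fix $\A\in\Aff^{N,perf}_K$. Because $F^{ad}(\varphi^{(n+1)}(\A))=\varphi^{(n)}(\A)$ for all $n$, Lemma~\ref{Lem:rigidtorigid} shows $\varphi^{(n)}(\A)$ is rigid for one $n$ iff it is rigid for every $n$; hence $\varphi(\A)\in\mathrm{R}(\inverselimit)$ iff $\varphi^{(0)}(\A)\in\mathrm{R}(\Aff^{N,ad}_K)$. So everything comes down to showing that $\A$ is rigid in $\Aff^{N,perf}_K$ — that $\ker(\A)\subset R^{perf}$ is maximal — if and only if its contraction $\ker(\A)\cap R^0$ is maximal in $R^0=R^{ad}$. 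Via the homeomorphism $\mu\colon\Aff^{N,perf}_K\to\Spa(B,B^\circ)$ of Lemma~\ref{Lem:phihom} with $B=\varinjlim_n R^n$, the contraction $\ker(\A)\cap B$ is maximal in $B$ iff $\ker(\A)\cap R^0$ is maximal in $R^0$, since $B$ is the filtered union of the integral extensions $R^0\hookrightarrow R^n$ (Lemma~\ref{Lem:Integralover} iterated), so the domains $B/(\ker(\A)\cap B)$ and $R^0/(\ker(\A)\cap R^0)$ are fields together; conversely a compatible system of maximal ideals of the $R^n$ — equivalently a point of $\varprojlim_F\Aff^N_{K^\circ}(K)$, which exists abundantly since $K$ is algebraically closed — assembles into a maximal ideal of $B$. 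The one remaining link, that maximal ideals of the completion $R^{perf}$ match maximal ideals of the dense subalgebra $B$ under $\mu$, I would take from \cite{Xie2016}*{Proposition~2.2} (see also \cite{Huber1993}*{Proposition~3.9}), which already furnishes $\mu$ as a homeomorphism of adic spaces and, because $B$ is an increasing union of complete Tate algebras each module-finite over $R^0$, also identifies the rigid loci.

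I expect this last link to be the only real obstacle: one must know that $\mu$ does not merely match topologies but restricts to a bijection on rigid points, i.e. that maximal ideals of the perfectoid algebra $R^{perf}$ correspond to maximal ideals of $B$. Mere density is insufficient for such a correspondence (as $K[T]\subset K\la T\ra$ shows), so the argument must use the specific structure of $B$ as a union of complete, module-finite $R^0$-algebras, which forces the rigid locus to be controlled at each finite level. Once that reduction to a single level is in place, the rest — the valuation computation of the first part and the reduction through Lemma~\ref{Lem:rigidtorigid} in the second — is routine.
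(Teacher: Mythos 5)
Your first part (the intertwining $F^{perf}\circ\varphi=\varphi\circ T$) is fine and in fact more detailed than the paper, which dismisses it as true by construction; likewise the reduction from $B=\varinjlim_n R^n$ to $R^0$ via integrality (Lemma~\ref{Lem:Integralover} plus lying-over) is sound. The problem is exactly the step you yourself flag as ``the only real obstacle'' and then do not close: the correspondence of \emph{rigid} points between $\Spa(R^{perf},R^{perf,\circ})$ and $\Spa(B,B^{\circ})$. The results you propose to quote (\cite{Xie2016}*{Proposition~2.2}, \cite{Huber1993}*{Proposition~3.9}) furnish $\mu$ only as a homeomorphism of adic spectra; they say nothing about supports being maximal, and your structural justification --- that $B$ is an increasing union of complete Tate algebras module-finite over $R^0$ --- does not transfer to the completion: $R^{perf}=\widehat{B}$ is not integral or module-finite over any $R^n$, and, as you note yourself with the example $K[T]\subset K\la T\ra$, density alone cannot force a bijection between maximal supports. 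So the direction that matters most --- given a rigid point $(y_0,y_1,\ldots)$ of $\inverselimit$, showing the corresponding valuation on $R^{perf}$ has maximal support --- is left unproved.

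The paper closes precisely this hole by a direct construction that uses algebraic closedness of $K$ in an essential way: since each $M_{y_n}\subset R^n$ is maximal and $K$ is algebraically closed, $R^n/M_{y_n}=K$ for every $n$, so the compatible system gives a $K$-algebra homomorphism $\varinjlim_n R^n\to K$; because $|X_i^{(n)}(y_n)|\leq 1$ this map is contractive on $B^\circ$, hence continuous, hence extends to $R^{perf}=\widehat{B}\to K$, and the kernel of that extension is maximal simply because the quotient is the field $K$. This bypasses any general ``maximal ideals of a completion versus a dense subalgebra'' statement, which is exactly what your citation-based reduction would need but does not have. (For the forward direction --- $M_x\cap R^0$ maximal when $M_x\subset R^{perf}$ is maximal --- the paper is admittedly terse as well, but your proposal defers that to the same unproved link.) To repair your argument, replace the appeal to $\mu$ ``identifying the rigid loci'' with the explicit residue-field argument above, i.e.\ build the point of $\Aff^{N,perf}_K$ as a continuous $K$-valued point extending the compatible $K$-points at finite levels.
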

\begin{proof}
The commutation is trivially true since we construct $F^{perf}$ for satisfying that purpose. Giving $x\in R(\Aff^{N,perf}_K)$, we have $M_x=\{f\in R^{perf}\mid |f(x)|=0\}$ maximal, and $M_{\varphi^{(0)}(x)}=M_x\cap R^0$ is trivially maximal in $R^0$, i.e. $\varphi(x)$ is a rigid point. Conversely, for any $y=(y_0,y_1,\ldots)\in R(\inverselimit)$, we note that $R^n/M_{y_n}=K$ for all $n$ since $K$ is algebraically closed. Moreover, the chain $R^0\to R^1\to R^2\to\cdots$ naturally induces a chain $R^0/M_{y_0}\to R^1/M_{y_1}\to R^2/M_{y_2}\to \cdots$. By taking the direct limit, we naturally have $\varinjlim_n R^n\to K$, which is a valuation on $\varinjlim_nR^n$. The $\varphi^{-1}(y)$ is the valuation on $\widehat{\varinjlim_n R^n}$ that uniquely extend from $\varinjlim_n R^n\to K$. Since the valuation group is a field, $\varphi^{-1}(y)$ is a rigid point, which means our proof is completed.
\end{proof}
\begin{remark}
Xie proves a much stronger result which does not assume $K$ is completed. However, the proof of Lemma~\ref{FphiphiT} illustrates the idea of Xie's proof.
\end{remark}
\subsubsection{A quick conclusion for the characteristic $0$}
Readers who do not want to go through all details can just read this section and only focus on the behaviors of the rigid points and the dynamical structure. If we only consider points on the valuation rings $K^\circ$, we can toss the abused infinity symbols away and conclude Lemma~\ref{tauFFtau}, Equation~\eqref{eq:fpipit} and Lemma~\ref{FphiphiT} as the following lemma.
\begin{lemma}\label{FinalLemmaforChar0}
We have the following diagrams commute.
\begin{center}
\begin{tikzcd}
\Aff_{K^{\circ}}^N \arrow{r}{\tau} & \Aff^{N, ad}_K & \arrow{l}{\pi} \varprojlim\limits_{F^{ad}}\Aff^{N,ad}_K &\arrow{l}{\varphi} \Aff^{N,perf}_K\\
\Aff_{K^{\circ}}^N \arrow{u}{F}\arrow{r}{\tau} & \Aff^{N, ad}_K \arrow{u}{F^{ad}}& \arrow{l}{\pi} \varprojlim\limits_{F^{ad}}\Aff^{N,ad}_K\arrow{u}{T} &\arrow{l}{\varphi} \Aff^{N,perf}_K\arrow{u}{F^{perf}}
\end{tikzcd}
\end{center}
Moreover, the closed points on $\Aff^N_{K^\circ}$ are one-to-one to the rigid points on $\Aff_K^{N,ad}$ which are projected from the rigid points on $\varprojlim_{F^{ad}}\Aff_K^{N,ad}$, and the rigid points on $\varprojlim_{F^{ad}}\Aff_K^{N,ad}$ are also one-to-one to the rigid points on $\Aff_K^{N,perf}$.
\end{lemma}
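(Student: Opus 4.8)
The plan is to deduce Lemma~\ref{FinalLemmaforChar0} formally by stacking the commutative squares already produced and checking that passing to the ``finite'' part (the rigid points lying over $\Aff^N_{K^\circ}$) introduces no difficulty. First I would note that since $F=(F_1,\dots,F_N)$ with each $F_i\in R^\circ=K^\circ[X_1,\dots,X_N]$, evaluating $F$ at a point all of whose coordinates lie in $K^\circ$ again produces a point of $\Aff^N_{K^\circ}$; thus $F$ restricts to an endomorphism of $\Aff^N_{K^\circ}$, and Lemma~\ref{tauFFtau} then gives both the commutativity of the left square and the fact that $\tau$ restricts to a bijection $\mathrm{R}(\Aff^N_{K^\circ})\to\mathrm{R}(\Aff^{N,ad}_K)$. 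Since $K$ is algebraically closed, the closed points of $\Aff^N_{K^\circ}$ are precisely the tuples in $(K^\circ)^N$, so this is the first asserted one-to-one correspondence.

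Next I would strip the infinity symbols from Equation~\eqref{eq:fpipit} and from Lemma~\ref{FphiphiT}. The maps $F^{ad}$, $T$, $F^{perf}$ all satisfy that the only preimage of the infinity point is the infinity point (stated for $F^{ad}$ in the text, and transported to $T$ and $F^{perf}$ through $\varphi$), and by Lemma~\ref{Lem:rigidtorigid} a point of $\Aff^{N,ad}_K$ is rigid if and only if its image under $F^{ad}$ is; combined with the definition of a rigid point on the inverse limit this shows that $\pi$, $T$ and $F^{perf}$ preserve rigidity and never touch $\infty$ on the relevant locus. Hence the middle square (from Equation~\eqref{eq:fpipit}) and the right square (from Lemma~\ref{FphiphiT}) commute over their finite parts, and stacking the three squares gives the displayed diagram. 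For the ``moreover'', the correspondence $\mathrm{R}(\Aff^{N,perf}_K)\to\mathrm{R}(\varprojlim_{F^{ad}}\Aff^{N,ad}_K)$ is just the restriction of the homeomorphism $\varphi$ of Lemma~\ref{Lem:phihom}, cut down to rigid points via the identity $\varphi(\mathrm{R}(\Aff^{N,perf}_K\cup\{\infty\}))=\mathrm{R}(\varprojlim_{F^{ad}}\Aff^{N,ad}_K\cup\{\infty\})$ of Lemma~\ref{FphiphiT}; since $\infty$ is never a rigid point, deleting it on both sides yields the second one-to-one correspondence.

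It then remains to verify that $\pi$ actually maps $\mathrm{R}(\varprojlim_{F^{ad}}\Aff^{N,ad}_K)$ \emph{onto} $\mathrm{R}(\Aff^{N,ad}_K)$. Given a rigid point $x_0$, corresponding to a maximal ideal $M_{x_0}\subset R^0$, I would lift it step by step along the tower $\cdots\to R^1\xrightarrow{F_0}R^0$: by Lemma~\ref{Lem:Integralover} each transition map $F_n:R^n\to R^{n+1}$ is an integral ring extension, so lying-over supplies a maximal ideal of $R^{n+1}$ contracting to the chosen maximal ideal of $R^n$. This produces a compatible sequence of rigid points $(x_0,x_1,x_2,\dots)$, i.e.\ a rigid point of $\varprojlim_{F^{ad}}\Aff^{N,ad}_K$ with $\pi$-image $x_0$.

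I expect this last surjectivity to be the only genuine obstacle. The delicate point is ensuring that the maximal ideals chosen at successive levels are compatible with the transition maps $F_n$ (so that they genuinely glue to a point of the inverse limit) and that each one is maximal rather than merely prime; both are supplied by the integrality in Lemma~\ref{Lem:Integralover} (via lying-over and the fact that integral extensions preserve maximality), but the bookkeeping needs care. Everything else in Lemma~\ref{FinalLemmaforChar0} is a formal consequence of Lemmas~\ref{tauFFtau}, \ref{Lem:rigidtorigid}, \ref{Lem:phihom}, \ref{FphiphiT} and Equation~\eqref{eq:fpipit}.
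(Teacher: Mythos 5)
Your proposal is correct and takes essentially the paper's route: the paper gives no separate proof of Lemma~\ref{FinalLemmaforChar0}, presenting it simply as the assembly of Lemma~\ref{tauFFtau}, Equation~\eqref{eq:fpipit}, Lemma~\ref{Lem:phihom} and Lemma~\ref{FphiphiT} with the infinity points stripped away, exactly as you do. Your lying-over argument through Lemma~\ref{Lem:Integralover} for the surjectivity of $\pi$ on rigid points fills in a step the paper merely asserts (``the map is only surjective in general''), so you have supplied slightly more detail than the source.
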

\subsection{The characteristic $p$}
We use P. Scholze's perfectoid space theory to construct the tilt of the perfectoid space $\Aff^{N,perf}_K$ in this section. We consider the following chain
\begin{center}
\begin{tikzcd}
R^{0,\flat}\arrow{r}{F^{0,\flat}}& R^{1,\flat}\arrow{r}{F^{1,\flat}}& R^{2,\flat}\arrow{r}{F^{2,\flat}} &\cdots\arrow{r}{F^{n-1,\flat}}& R^{n,\flat}\arrow{r}{F^{n,\flat}} &R^{n+1,\flat}\arrow{r}{F^{n+1,\flat}}&\cdots
\end{tikzcd}
\end{center}
where $R^{n,\flat}=K^\flat\la X_1^{(n)},\ldots,X_N^{(n)}\ra$ and $F^{n,\flat}(X_i^{(n)})=\sum_{j=1}^{m_i} (X_i^{(n)})^{q_j}$ for each $i$. Then, we denote $R^{perf,\flat}$ as the $\widehat{\varinjlim_n R^{n,\flat}}$. It then follows that the tilt of the perfectoid space $\Aff^{N,perf}_K$ is $Spa(R^{perf,\flat},R^{perf,\flat,\circ})$, denoted by $\Aff^{N,perf}_{K^\flat}$. The tilt of the function $F^{perf}$, denoted by $F^{perf,\flat}$, then is induced by the maps $X_i^{(0)}\mapsto \sum_{j=1}^{m_i} (X_{i}^{(0)})^{q_j}$ and $X_i^{(n)}\mapsto X_{i}^{(n-1)}$ for all $n>0$ and for all $i$.

The theory of perfectoid theory says we have a homeomorphism $\rho:\Aff^{N,perf}_K\to \Aff^{N,perf}_{K^\flat}$ and the dynamical systems $(\Aff_K^{N,perf},F^{perf})$ and $(\Aff_{K^\flat}^{N,perf},F^{perf,\flat})$ is isomorphic by $\rho$, i.e.
\begin{equation}\label{Eq:FrhorhoF}
F^{perf,\flat}\circ\rho=\rho\circ F^{perf}.
\end{equation}

The rests are pretty straight forward. Let $F^{\flat}:\Aff^N_{K^\flat}\to F\Aff^N_{K\flat}$ be given by $F^{\flat}=(F_1^{\flat},\ldots,F_N^\flat)$ where
\[
F_i^{\flat}(X_1,\ldots,X_N)=\sum_{j=1}^{m_i}X_i^{q_j},
\]
By following the same moral, the $F^{\flat,ad}:\Aff^{N,ad}_{K^\flat}\cup\{\infty\}\to\Aff^{N,ad}_{K^\flat}\cup\{\infty\}$  is a morphism induced by $X_i\mapsto \sum_{j=1}^{m_i}X_i^{q_j}$ for each $i$. We also have the natural embedding $\tau^{b}:\Aff_{K^\flat}^N\to \Aff_{K^\flat}^{N,ad}\cup\{\infty\}$.  Then, recall that the proof of Lemma~\ref{tauFFtau} Lemma~\ref{Lem:rigidtorigid}, Lemma~\ref{Lem:phihom} and Lemma~\ref{FphiphiT} does not use any fact about the characteristic of the field $K$ , so those lemmas are also true over $K^\flat$. Let us define $\pi^b:\inverselimitb\to\Aff^{N,ad}_{K^\flat}\cup\{\infty\}$ as $\pi^{\flat}((x_0,x_1,x_2,\ldots))=x_0$, $T^{\flat}:\inverselimitb\to\inverselimitb$ as $T((x_0,x_1,x_2,\ldots))=(F^{\flat,ad}(x_0),x_0,x_1,x_2,\ldots)$ and finally the map $\varphi^\flat:\Aff^{N,perf}_{K^\flat}\cup\{\infty\}\to \inverselimitb$ as the map inducing by the natural embeddings $K^\flat\la X_1^{(n)},\ldots X_N^{(n)}\ra\hookrightarrow K^\flat\la X_1^{(\infty)},\ldots, X_N^{(\infty)}\ra$ for all $n$. We give a lemma that is analogue to Lemma~\ref{FinalLemmaforChar0} to conclude this section.   
\begin{lemma}\label{FinalLemmaforCharp}
We have the following diagrams commute.
\begin{center}
\begin{tikzcd}
\Aff_{K^{\flat,\circ}}^N \arrow{r}{\tau^{\flat}} & \Aff^{N, ad}_{K^{\flat}} & \arrow{l}{\pi^{\flat}} \varprojlim\limits_{F^{\flat,ad}}\Aff^{N,ad}_{K^{\flat}}&\arrow{l}{\varphi^{\flat}} \Aff^{N,perf}_{K^{\flat}}\\
\Aff_{K^{\flat,\circ}}^N \arrow{u}{F^{\flat}}\arrow{r}{\tau} & \Aff^{N, ad}_{K^{\flat}} \arrow{u}{F^{\flat,ad}}& \arrow{l}{\pi^{\flat}} \varprojlim\limits_{F^{\flat,ad}}\Aff^{N,ad}_{K^{\flat}}\arrow{u}{T^{\flat}} &\arrow{l}{\varphi^{\flat}} \Aff^{N,perf}_{K^{\flat}}\arrow{u}{F^{perf,\flat}}
\end{tikzcd}
\end{center}
Moreover, the closed points on $\Aff^N_{K^{\flat,\circ}}$ are one-to-one to the rigid points on $\Aff_{K^{\flat}}^{N,ad}$ which are projected from the rigid points on $\varprojlim_{F^{\flat,ad}}\Aff_{K^{\flat}}^{N,ad}$, and the rigid points on $\varprojlim_{F^{\flat,ad}}\Aff_{K^{\flat}}^{N,ad}$ are also one-to-one to the rigid points on $\Aff_{K^{\flat}}^{N,perf}$.
\end{lemma}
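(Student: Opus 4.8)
The plan is to observe that Lemma~\ref{FinalLemmaforCharp} is the exact analogue of Lemma~\ref{FinalLemmaforChar0}, and that every ingredient used to prove the latter --- Lemma~\ref{keylemma}, Lemma~\ref{tauFFtau}, Lemma~\ref{Lem:Integralover}, Lemma~\ref{Lem:rigidtorigid}, Lemma~\ref{Lem:phihom}, Lemma~\ref{FphiphiT} --- carries over verbatim once one checks that $K^\flat = \widehat{\overline{\F_p(t)}}$ plays exactly the structural role that $K = \C_p$ did. Concretely, $K^\flat$ is complete for a non-archimedean absolute value, it is algebraically closed, and its value group is dense in $\R_{\geq 0}$; these are the only properties of $K$ that the cited proofs used, and none of those proofs uses the characteristic of $K$. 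One fixes $\varpi \in K^{\flat,\circ}$ with $|t| \le |\varpi| < 1$ in place of the earlier $\varpi$ (one may simply take $\varpi = t$).

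First I would record the characteristic-$p$ analogue of Lemma~\ref{keylemma}: since $F^\flat_i = \sum_{j=1}^{m_i} X_i^{q_j}$ with the $q_j$ powers of $p$, with strictly largest exponent $q_{m_i}$ and unit leading coefficient, the estimate in the proof of Lemma~\ref{keylemma} gives, for $\mathrm{x} \in \Aff^N_{K^\flat}\setminus\Aff^N_{K^{\flat,\circ}}$ with $|x_n| = \max_i|x_i| > 1$, that $|F^\flat_n(\mathrm{x})| = |x_n|^{q_{m_n}} > |x_n| > 1$; hence $F^\flat$ is a restricted lift of $p$th power over $K^\flat$, so $F^{\flat,ad}(\infty) = \infty$, $(F^{\flat,ad})^{-1}(\infty) = \{\infty\}$, and the infinity bookkeeping matches. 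The analogue of Lemma~\ref{Lem:Integralover} is then immediate, since each $X_i$ is a root of the monic polynomial $T^{q_{m_i}} + \sum_{j<m_i} T^{q_j} - F^\flat_i \in K^\flat\la F^\flat_1,\ldots,F^\flat_N\ra[T]$, so $K^\flat\la X_1,\ldots,X_N\ra$ is integral over $K^\flat\la F^\flat_1,\ldots,F^\flat_N\ra$; with integrality available, the argument of Lemma~\ref{Lem:rigidtorigid} shows that $x$ is rigid if and only if $F^{\flat,ad}(x)$ is. Next, $\tau^\flat$ is constructed exactly as $\tau$ was (evaluate polynomials at closed points of $\Aff^N_{K^{\flat,\circ}}$, extend to $R^{\flat,ad}$, and invoke the same [BGR] statement to get $\tau^\flat(\mathrm{R}(\Aff^N_{K^{\flat,\circ}})) = \mathrm{R}(\Aff^{N,ad}_{K^\flat})$), and $\tau^\flat \circ F^{\flat,ad} = F^\flat \circ \tau^\flat$ is forced by the definitions; the identity $F^{\flat,ad}\circ\pi^\flat = \pi^\flat\circ T^\flat$ analogous to~\eqref{eq:fpipit} is formal. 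On the perfectoid side, the computation inside the proof of the lemma identifying $R^{perf,\circ}$ goes through unchanged to give $R^{perf,\flat,\circ} = K^{\flat,\circ}\la X_1^{(\infty)},\ldots,X_N^{(\infty)}\ra$, open and bounded with surjective Frobenius modulo $\varpi$; Proposition~2.2 of \cite{Xie2016} (equivalently \cite{Huber1993}*{Proposition~3.9}) is characteristic-free, so $\varphi^\flat$ is a homeomorphism by the proof of Lemma~\ref{Lem:phihom}, and $F^{perf,\flat}\circ\varphi^\flat = \varphi^\flat\circ T^\flat$ together with the rigid-point bijection follows as in Lemma~\ref{FphiphiT}.

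The one step that is not a literal transcription, and which I expect to be the only delicate point, is the compatibility of the two descriptions of the tilted algebra: the $R^{perf,\flat} = \widehat{\varinjlim_n R^{n,\flat}}$ built here from the chain $R^{0,\flat}\to R^{1,\flat}\to\cdots$ must be identified with the tilt $(R^{perf})^\flat$ produced by Scholze's theory, so that the homeomorphism $\rho$ and the conjugacy~\eqref{Eq:FrhorhoF} are legitimately available. This comes down to checking that in characteristic $p$ each $F^\flat_i = \sum_j X_i^{q_j}$ is a genuine $p$th power, namely $\bigl(\sum_j X_i^{q_j/p}\bigr)^p$ --- here the exponents occurring in the reduction of $G_i^p$ modulo $\varpi$ are the numbers $p\,q_{i,j}$, each a multiple of $p$, and Frobenius is additive --- so that the maps $F^{n,\flat}$ genuinely are the reductions and tilts of the $F_n$. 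Granting this identification, which is exactly where the perfectoid machinery (rather than the verbatim re-run of Section~\ref{section:3}) enters, the proof is complete; everything else is a transcription of Section~\ref{section:3} with $\C_p$ replaced by $\widehat{\overline{\F_p(t)}}$.
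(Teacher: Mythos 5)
Your proposal is correct and follows essentially the same route as the paper, which gives no separate proof of this lemma but simply observes that the constructions and the proofs of Lemma~\ref{tauFFtau}, Lemma~\ref{Lem:rigidtorigid}, Lemma~\ref{Lem:phihom} and Lemma~\ref{FphiphiT} use no fact about the characteristic of $K$ and hence hold verbatim over $K^\flat$, with the homeomorphism $\rho$ and Equation~\eqref{Eq:FrhorhoF} supplied by Scholze's tilting theory. Your extra check that $\widehat{\varinjlim_n R^{n,\flat}}$ really is the tilt of $R^{perf}$ (because the exponents in the reduction of $G_i^p$ are multiples of $p$, so $F^\flat_i$ is a genuine $p$th power) is a point the paper leaves implicit, so your write-up is, if anything, slightly more complete than the paper's.
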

\section{Restricting on Periodic Points}

\subsection{Passing by Reduction}
The reduction map is naturally defined on a projective space $\Proj^N_K$ not on the the affine space, but we can still construct a reduction map by identifying the affine space to a particular affine peace of the projective space and use the reduction map on the projective space to give a reduction map on the affine space.

In the followings, we define $\bar{\rho}:\Proj_k^N\to\Aff_k^N\cup\{\infty\}$ by
\begin{align*}
\bar{\rho}:[\bar{x_0}:\bar{x_1}:\cdots:\bar{x_N}]&\mapsto(\bar{x_1}\bar{x_0}^{-1},\ldots,\bar{x_N}\bar{x_0}^{-1})\quad\mbox{if }\bar{x_0}\neq \bar{0}\mbox{, and}\\
[\bar{0}:\bar{x_1}:\cdots:\bar{x_N}]&\mapsto \infty
\end{align*}
and define $\rho^{-1}:\Aff_K^N\to\Proj_K^N$ by
\[
\rho^{-1}:(x_1,\ldots,x_N)\to[1:x_1:\cdots:x_N].
\]
The reduction map $red:\Proj_K^N\to\Proj_k^N$ is defined by
\[
red_{\Proj}:[x_0:x_1:\cdots:x_N]\to[\bar{x_0}:\bar{x_1}:\cdots:\bar{x_N}]
\]
where we need $\|[x_0:x_1:\cdots:x_N]\|\coloneqq  \max\{|x_i|\}=1$ and $\bar{x_i}\coloneqq  x_i\mod K^{\circ\circ}$. Then the reduction map on an affine space is naively defined as $red_{\Aff}\coloneqq  \bar{\rho}\circ red\circ\rho^{-1}:\Aff_K^N\to\Aff_k^N\cup\{\infty\}$. We should omit the index from the $red_{\Aff}$ and just denote by $red$ if the domain is clear in our discussion. The following lemma shows that this definition makes sense of our purpose.
\begin{lemma}\label{Lem:redffred}
Let $\bar{F}=F\mod K^{\circ\circ}$ where $F$ is a restricted lift of $p$th power . Then, we have the following diagram commutes,

\begin{center}
\begin{tikzcd}
\Aff_K^N\arrow{d}{red}\arrow{r}{F}&\Aff_K^N\arrow{d}{red}\\
\Aff_k^N\cup\{\infty\}\arrow{r}{\bar{F}}&\Aff_k^N\cup\{\infty\}
\end{tikzcd}
\end{center}
i.e. $F\circ red=red\circ\bar{F}$.
\end{lemma}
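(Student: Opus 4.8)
The plan is to verify the identity asserted by the square, namely $red\circ F=\bar{F}\circ red$, pointwise, by splitting $\Aff_K^N$ into the set $\Aff_{K^\circ}^N$ of ``integral'' points and its complement and observing that $F$ carries each of these into itself. The containment $F(\Aff_{K^\circ}^N)\subseteq\Aff_{K^\circ}^N$ is immediate from $F_i\in R^\circ=K^\circ[X_1,\ldots,X_N]$ together with the ultrametric inequality: if $|x_j|\leq 1$ for all $j$ and $F_i=\sum_I a_IX^I$ with $a_I\in K^\circ$, then $|F_i(x)|\leq\max_I|a_Ix^I|\leq 1$. The containment $F(\Aff_K^N\setminus\Aff_{K^\circ}^N)\subseteq\Aff_K^N\setminus\Aff_{K^\circ}^N$ is precisely the \emph{restricted} clause in the hypothesis on $F$. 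On the target, $\bar{F}$ restricts to the coordinatewise polynomial map $(\bar{F}_1,\ldots,\bar{F}_N)$ on $\Aff_k^N$ and fixes $\infty$.

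First I would unwind $red=\bar\rho\circ red_{\Proj}\circ\rho^{-1}$ to describe the reduction map concretely. If all $|x_i|\leq 1$ then the representative $\rho^{-1}(x)=[1:x_1:\cdots:x_N]$ already has Gauss norm $1$, so $red_{\Proj}$ applies to it directly and $\bar\rho$ then returns $red(x)=(\bar{x}_1,\ldots,\bar{x}_N)\in\Aff_k^N$. If instead $\|x\|=\max_i|x_i|>1$, dividing through by a coordinate of maximal absolute value produces a normalized representative whose zeroth entry has absolute value $<1$, hence reduces to $\bar{0}$; thus $red_{\Proj}(\rho^{-1}(x))$ lies in $\{x_0=0\}$ and $\bar\rho$ sends it to $\infty$, so $red(x)=\infty$. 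In short, $red$ is coordinatewise reduction on $\Aff_{K^\circ}^N$ and is constantly $\infty$ off it. Given this, the case $x\in\Aff_{K^\circ}^N$ is a one-line check: $F(x)\in\Aff_{K^\circ}^N$, so $red(F(x))=(\overline{F_1(x)},\ldots,\overline{F_N(x)})$, and since reduction $K^\circ\to k$ is a ring homomorphism, $\overline{F_i(x)}=\sum_I\bar{a}_I\bar{x}^I=\bar{F}_i(\bar{x})$, i.e. $red(F(x))=\bar{F}(red(x))$.

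The remaining case $x\notin\Aff_{K^\circ}^N$ is forced by the restricted hypothesis: here $red(x)=\infty$, while $F(x)\notin\Aff_{K^\circ}^N$ gives $red(F(x))=\infty=\bar{F}(\infty)=\bar{F}(red(x))$, which finishes the proof. The only delicate point is the bookkeeping at infinity — tracking the rescaling step hidden inside $red_{\Proj}$ and using the convention $\bar{F}(\infty)=\infty$ — and this is exactly where the word ``restricted'' earns its keep: for a general lift of $p$th power, $F$ may move points of $\Aff_K^N\setminus\Aff_{K^\circ}^N$ back into $\Aff_{K^\circ}^N$, and then the square genuinely fails at those points.
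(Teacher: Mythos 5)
Your proof is correct and follows the same route the paper takes: the paper's entire proof is the one-liner ``It directly follows by Lemma~\ref{keylemma},'' which is precisely the observation that the restricted condition keeps $\Aff_K^N\setminus\Aff_{K^\circ}^N$ stable under $F$, while on $\Aff_{K^\circ}^N$ commutativity is an immediate consequence of reduction being a ring homomorphism. You have simply unwound that one line into a careful case split (integral points vs.\ non-integral points) and an explicit description of $red$ via the projective rescaling, and you correctly pinpoint that the hypothesis ``restricted'' is exactly what prevents the square from failing at points outside $\Aff_{K^\circ}^N$.
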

\begin{proof}
It directly follows by Lemma~\ref{keylemma}
\end{proof}
The main purpose of this section is to show that the reduction map $red$ gives an injective map from the periodic points of $F$ over K to the periodic points of $\bar{F}$ over $k$. More precisely, let us give the following definition.
\begin{definition}
Let $f:X\to X$ be an automorphism, and depending on the context, we denote $R(X)$ to be the closed or rigid points of $X$. Then, we define the $\Per(f)=\{\A \in R(X)\mid f^n(\A)=\A\text{ for some }n\}$.
\end{definition}
This is not a standard notation. However, we only care about rigid points and closed points in this paper. Moreover, the fields $K$ and $K^\flat$ in our context are algebraically completed which makes our notations less ambiguous. The conclusion of this section is the following lemma.
\begin{lemma}
The reduction map $red:\Aff^N_K\to\Aff^N_k$ restricts on the $\Per(F)$ is bijective to $\Per(\bar{F})$ where $F$ is a restricted lift of $p$th power and $\bar{F}=(\bar{F_1},\ldots,\bar{F_2})$ where each $\bar{F_i}$ is the $F_i\mod \varpi$.
\end{lemma}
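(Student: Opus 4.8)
The plan is to deduce the entire statement from one elementary estimate: a restricted lift of $p$th power does not expand the unit polydisc $\Aff^N_{K^\circ}$ and is \emph{strictly} contracting on each residue disc. First I would check that $red$ is even defined on $\Per(F)$: if $\|\mathrm{x}\|>1$ then, because $F$ is restricted, $\|F^n(\mathrm{x})\|$ strictly increases with $n$, so $\mathrm{x}\notin\Per(F)$; hence $\Per(F)\subseteq\Aff^N_{K^\circ}$ and $red$ takes finite values on $\Per(F)$. Iterating the commutation $red\circ F=\bar{F}\circ red$ of Lemma~\ref{Lem:redffred} gives $red\circ F^n=\bar{F}^n\circ red$, so $red(\Per(F))\subseteq\Per(\bar{F})$ and the lemma reduces to showing this restricted map is a bijection.

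For the key estimate, write $F_i=G_i^p+\varpi\tilde{f}_i$ with $G_i,\tilde{f}_i\in K^\circ[X_1,\dots,X_N]$. Since $|p|\le|\varpi|$, each partial derivative $\partial F_i/\partial X_j=pG_i^{p-1}\partial_jG_i+\varpi\,\partial_j\tilde{f}_i$ has Gauss norm at most $|\varpi|$. Using the Taylor expansion $F_i(\gamma+h)-F_i(\gamma)=\sum_j h_j\,\partial_jF_i(\gamma)+Q_i(\gamma,h)$, where $Q_i\in K^\circ[X,H]$ and every monomial of $Q_i$ has $H$-degree at least $2$, I get for all $\gamma,\gamma+h\in\Aff^N_{K^\circ}$, with $\delta:=\|h\|$,
\[
\|F(\gamma+h)-F(\gamma)\|\le\max\bigl(|\varpi|\,\delta,\ \delta^2\bigr).
\]
In particular $F$ is non-expanding on $\Aff^N_{K^\circ}$, it sends the residue disc of $\bar{x}$ into that of $\bar{F}(\bar{x})$, and the right-hand side is $<\delta$ whenever $0<\delta<1$.

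Injectivity is then immediate. If $\alpha,\beta\in\Per(F)$ satisfy $red(\alpha)=red(\beta)$, pick a common period $n$; along the jointly periodic orbit the points $F^j\alpha$ and $F^j\beta$ always lie in a common residue disc, so $\|F^{j+1}\alpha-F^{j+1}\beta\|<\|F^j\alpha-F^j\beta\|$ as long as $F^j\alpha\ne F^j\beta$, and comparing $j=0$ with $j=n$ forces $\alpha=\beta$ (if $F^j\alpha=F^j\beta$ already for some $j$, periodicity forces $\alpha=\beta$ at once). For surjectivity, given $\bar{\beta}\in\Per(\bar{F})$ of period $m$, I would pick any lift $\gamma_0\in red^{-1}(\bar{\beta})$ and iterate $\gamma_{k+1}:=F^m(\gamma_k)$; all $\gamma_k$ stay in the residue disc $D:=red^{-1}(\bar{\beta})$, which is $F^m$-stable because $\bar{F}^m(\bar{\beta})=\bar{\beta}$, and $d_k:=\|\gamma_{k+1}-\gamma_k\|$ satisfies $d_{k+1}\le\max(|\varpi|d_k,d_k^2)$. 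This forces $d_k\to0$, so $(\gamma_k)$ is Cauchy in $D$; its limit $\beta$ satisfies $F^m(\beta)=\beta$ and $red(\beta)=\bar{\beta}$, giving surjectivity.

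The step I expect to be the main obstacle is the surjectivity argument, because the contraction coming from the key estimate is \emph{not} a uniform Lipschitz contraction: the quadratic term $\delta^2$ has contraction ratio $\delta$, which is bounded only by $1$, so the Banach fixed point theorem cannot be quoted directly. One has to run the explicit dichotomy — either $d_k\le|\varpi|$, in which case $d_{k+1}\le|\varpi|d_k$ and one gets geometric decay, or $d_k>|\varpi|$, in which case $d_{k+1}\le d_k^2$ and hence $d_k\le d_0^{2^k}$ — to see that $d_k\to0$ in all cases; and one must also note that residue discs of $\Aff^N_{K^\circ}$ are complete metric spaces, being closed subsets of the complete space $\Aff^N_{K^\circ}$. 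The remaining verifications are routine manipulations with the ultrametric inequality.
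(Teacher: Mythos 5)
Your proposal is correct, and its backbone — periodic points must lie in $\Aff^N_{K^\circ}$ by restrictedness, $red\circ F=\bar F\circ red$ gives $red(\Per(F))\subseteq\Per(\bar F)$, and strict contraction of $F$ on residue discs pins down at most one periodic point per fiber — is the same as the paper's. The differences are in how the two halves are executed. For injectivity the paper derives the contraction directly from the congruence $F_i=G_i^p+\varpi f_i$, getting $\|F(\mathrm{x}_0+r)-F(\mathrm{x}_0)\|\leq\max\{\delta^p,|\varpi|\delta\}<\delta$, whereas you get $\max\{|\varpi|\delta,\delta^2\}$ from the bound $\|\partial F_i/\partial X_j\|\leq|\varpi|$ plus the integral Taylor remainder; both constants do the job, and your comparison of $j=0$ with $j=n$ along a common period is just a rephrasing of the paper's ``$F$ is attracting on $red^{-1}(y)$'' argument. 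The genuine divergence is surjectivity: the paper disposes of it in one sentence, asserting that a solution of $\bar F^n(X)=X$ over $k$ lifts to a solution of $F^n(X)=X$ over $K$ with the prescribed reduction, without justification (and, as stated, without explaining why the lift lands in the correct residue disc). You instead run the fixed-point iteration $\gamma_{k+1}=F^m(\gamma_k)$ inside the $F^m$-stable residue disc and prove convergence by the dichotomy $d_{k+1}\leq|\varpi|d_k$ versus $d_{k+1}\leq d_k^2$, correctly noting that Banach's theorem cannot be quoted verbatim because the contraction ratio is not uniform. This buys a self-contained proof of surjectivity that simultaneously locates the lift in $red^{-1}(\bar\beta)$, effectively supplying the justification the paper's one-line claim omits; the paper's route, if fleshed out (e.g.\ by a Hensel-type or finiteness argument for the scheme $F^n(X)=X$), would be shorter on the page but rests on exactly the kind of lifting statement your iteration proves.
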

\begin{proof}
First, along the argument in Lemma~\ref{Lem:redffred}, it is easy to prove $\Per(F)\cap\Aff^N_K\setminus\Aff^N_{K^\circ}=\emptyset$. Moreover, by the same lemma, it is clear that $red(\Per(F))\subseteq \Per(\bar{F})$ since $red\circ F=\bar{F}\circ red$. For the surjectivity, for every periodic point $\bar{\A}\in\Aff_k^N$, $\bar{\A}$ is a solution of the equation $\bar{F}^n=(X_1,\ldots,X_N)$, and it naturally implies that there is a solution $\A\in\Aff_K^N$ of the equation $F^n=(X_1,\ldots,X_N)$ for which $red(\A)=\bar{\A}$. Finally, we are going to show bijection by proving that $F$ is attracting on $red^{-1}(y)$ for every $y\in\Per(\bar{F})$ 

Let $\textrm{x}_0\coloneqq  (\bar{x_{0,1}},\ldots,\bar{x_{0,N}})\in red^{-1}(y)\cap\Per(F)$. We want to show that every other point $\textrm{x}\in red^{-1}(y)\setminus\{\textrm{x}_0\}$ are attracting to $\textrm{x}$. By induction, it is not hard to show that, for all $i$, the $i$th coordinate of $F^m$ for all $m$ is still of the form
\[
\sum_{j=1}^{m_i} X_i^{q_j}+\varpi f_i(X_1,\ldots,X_N)
\]
where, of course, $q_j$s and $f_i$ are different from what we give in the beginning. Since $\textrm{x}_0$ is a fixed point of $F^m$ for some $m$, the induction step allows us to assume $m=1$ without loss of generality. Rewrite $\mathrm{x}$ by $(x_{0,1}+r_1,\ldots,x_{0,N}+r_N)$ for some $r_i\ K^{\circ\circ}$, and then we have

\begin{align*}
|F_i(\mathrm{x}_0+r)-F_i(\mathrm{x}_0)|&=|G_i(\mathrm{x}_0+r)^p+\varpi f_i(\textrm{x}_0+r)-G_i(\mathrm{x}_0)^p-\varpi f_i(\mathrm{x_0})|\\
&\leq\max\{|G_i(\mathrm{x}_0+r)-G_i(\mathrm{x}_0)|^p,\varpi|f_i(\mathrm{x}_0+r)-f_i(\mathrm{x}_0)|\}\\
&\leq\max\{\max_{i}|r_i|^p,\varpi\max_i|r_i|\} <\max_i\{r_i\}\\
&=\|\textrm{x}_0-\textrm{x}\|.
\end{align*}

Hence, we conclude that $F$ is attracting on $red^{-1}(y)$, and the injection follows.
\end{proof}
\subsection{Passing by tilt}
First, we note that $\tau$ (resp. $\tau^\flat$) and $\varphi$ (resp. $\varphi^\flat$) induces a bijection on the $\Per(F)\to \Per(F^{ad})$ (resp. $\Per(F^{\flat})\to\Per(F^{\flat,ad})$) and $\Per(F^{perf})\to\Per(T)$ (resp. $\Per(F^{perf,\flat})\to \Per(T^\flat)$) respectively, and the connection between $\Per(F^{ad})$ (resp. $\Per(F^{\flat,ad})$) and $\Per(T)$ (resp. $\Per(T^\flat)$) can be built naturally by the following way. We define $\chi:\Per(F^{ad})\to \Per(F^{ad})$ (similarly, $\chi^\flat$) as the following, for any $\A\in\Per(F^{ad})$,
\[
\chi:\A\mapsto (\A, F^{n-1}(\A), F^{n-2}(A),\ldots, F^2(\A), F(\A), \A, F^{n-1}(\A),\ldots) 
\]
where the integer $n$ is the period of $\A$, i.e. $(F^{ad})^m(\A)\neq \A$ for all integer $m<n$.

This definition naturally gives a bijection from $\Per(F^{ad})$(resp. $\Per(F^{\flat,ad})$) to $\Per(T)(resp. \Per(T^\flat))$, and, therefore, by Lemma~\ref{FinalLemmaforChar0}, Lemma~\ref{FinalLemmaforCharp} and Equation~\eqref{Eq:FrhorhoF}, we have a bijection
\[
\iota\coloneqq   \pi^{\flat}\circ\varphi^{\flat}\circ\rho\circ\varphi^{-1}\circ\chi:\Per(F^{ad})\to \Per(F^{\flat,ad}).
\]
Observe that for every point $\A\in \Per(F)$, we have $\red(\A)= \red^{\flat}\circ(\tau^{\flat})^{-1}\circ\iota\circ\tau(\A)$ where $\red^{\flat}:\Aff^N_{K^\flat}\to \Aff_k^N$ is the natural reduction, and so we have
\[
\phi\coloneqq  \eta\circ\red=\eta\circ\red^{\flat}\circ(\tau^{\flat})^{-1}\circ\iota\circ\tau=(\tau^{\flat})^{-1}\circ\iota\circ\tau
\]
on $\Per(F)$ where $\eta:\Aff_k^N\to\Aff_{K^\flat}^N$ is the natural embedding with $\eta\circ\red^\flat= id$ on the $\Aff^{N}_{k}\subset\Aff^{N}_{K^\flat}$. In words, the periodic points of $F$ passing from the left is equal to passing from the right.
\section{The main theorem}
\subsection{proof of the dynamical Manin-Monford conjecture}
\begin{reptheorem}{the dynamical Manin-Mumford conjecture}
Let $V$ be a variety of $\Aff_K^N$ and $F$ is a restricted lift of $p$-th power, and assume that $\bar{F}^l$ is a finite surjective morphism of $red(V)\coloneqq \bar{V}$ for some integer $l\geq 1$, i.e. $\bar{F}^l:\bar{V} \to \bar{V}$. Then, if $V\cap \Per(F)$ is dense in $V$, then $V\cap\Per(F^l)=V\cap\Per(F)$ is periodic; hence, $V$ is periodic.
\end{reptheorem}
Before proving Theorem~\ref{Inverse Mordelll-Lang conjecture}, there is an interested conclusion following from the theorem.
\begin{corollary}\label{cor:1}
If the $\# \Per(f)\cap\Per(g)$ is infinite where $f, g:\Aff^N_K\to\Aff^N_K$ are restricted lift of $p$th power with $\bar{f}=\bar{g}$, then $f=g$.
\end{corollary}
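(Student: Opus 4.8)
The plan is to deduce the corollary from Theorem~\ref{the dynamical Manin-Mumford conjecture} applied to the product map on the diagonal, and then to rigidify the resulting identity $f^{m}=g^{m}$ into $f=g$ using the attracting estimate behind the reduction step. First consider $F:=f\times g:\Aff^{2N}_K\to\Aff^{2N}_K$, writing a point of $\Aff^{2N}_K$ as $(x,y)$ with $x,y\in\Aff^N_K$. One checks straight from the definitions that $F$ is again a restricted lift of $p$th power: each coordinate is a lift of $p$th power with respect to a common $\varpi$ of value $\max(|\varpi_f|,|\varpi_g|)<1$, and the two restricted conditions pass to $F$ because $\|F(x,y)\|=\max(\|f(x)\|,\|g(y)\|)$. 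If $f,g$ come from Lemma~\ref{keylemma} then so does $F$, so $\bar F$ is a coordinatewise product of nonconstant one-variable polynomial self-maps of $\Aff^1_k$, hence finite surjective; in general this finiteness is a standing hypothesis. Let $V:=\Delta\subset\Aff^{2N}_K$ be the diagonal. Since $\bar f=\bar g$, the map $\bar F$ preserves $\red(V)=\bar\Delta\cong\Aff^N_k$ and acts there as $\bar f$, so $\bar F^{l}:\bar\Delta\to\bar\Delta$ is finite surjective for some $l$. A point $(x,x)$ is periodic for $F$ precisely when $x\in\Per(f)\cap\Per(g)$, so $V\cap\Per(F)$ is infinite; assuming it is Zariski dense in $V$ — automatic for $N=1$, since $\Delta$ is then a curve — Theorem~\ref{the dynamical Manin-Mumford conjecture} yields that $\Delta$ is periodic under $F$.

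From $F^{m}(\Delta)=\Delta$ it follows that $F^{m}(x,x)=(f^{m}(x),g^{m}(x))$ lies on $\Delta$ for every $x$, i.e. $f^{m}=g^{m}$ as polynomial maps; the remaining task is to upgrade this to $f=g$, and this is where I expect the real work to lie. Put $\delta:=\|f-g\|$, the largest absolute value of a coefficient of the $f_i-g_i$; since $\bar f=\bar g$ we have $\delta<1$, and it suffices to rule out $\delta>0$. Suppose $\delta>0$ and fix $\varpi_{0}\in K^{\times}$ with $|\varpi_{0}|=\delta$, so $\overline{(f-g)/\varpi_{0}}$ is a nonzero polynomial over $k$, nonvanishing on a dense open $U\subseteq\Aff^{N}_k$. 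Two facts, both read off the attracting computation (compare the proofs of Lemma~\ref{keylemma} and Lemma~\ref{Lem:redffred}): for $y,z\in(K^{\circ})^{N}$ one has $\|g(y)-g(z)\|\le\|y-z\|\cdot\max(\|y-z\|,|\varpi|)$, and a polynomial all of whose coefficients have absolute value $\le\delta$ takes values of absolute value $\le\delta$ on $(K^{\circ})^{N}$, with equality at any $K^{\circ}$-point reducing into the nonvanishing locus of $\overline{(f-g)/\varpi_{0}}$. From the telescoping identity
\[
(f^{m}-g^{m})(x)=(f-g)\bigl(f^{m-1}(x)\bigr)+\Bigl(g\bigl(f^{m-1}(x)\bigr)-g\bigl(g^{m-1}(x)\bigr)\Bigr)
\]
and an induction (using that $f,g$ preserve $(K^{\circ})^{N}$ and the first fact) one gets $\|(f^{j}-g^{j})(x)\|\le\delta$ for all $j\ge0$ and all $x\in(K^{\circ})^{N}$, so the second summand has norm $\le\delta\cdot\max(\delta,|\varpi|)<\delta$. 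Since $\bar f^{m-1}$ is dominant (because $\bar f^{l}$ is finite surjective), I can pick $x_{0}\in(K^{\circ})^{N}$ with $f^{m-1}(x_{0})$ reducing into $U$, so the first summand has norm exactly $\delta$; the ultrametric inequality then forces $\|(f^{m}-g^{m})(x_{0})\|=\delta>0$, contradicting $f^{m}=g^{m}$. Hence $\delta=0$ and $f=g$.

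The crux is the last step — turning ``same reduction plus a common iterate'' into ``equal'' — and within it the essential point is that one can steer the $\bar f$-orbit into the locus where $f-g$ attains its Gauss norm, which is exactly why dominance (a fortiori finite surjectivity) of $\bar f$ cannot be dispensed with. The only soft spot is the passage ``infinite $\Rightarrow$ Zariski dense'' in the first paragraph: it is invisible for curves, i.e. for $N=1$, and for higher-dimensional $V$ should be absorbed into the statement (or replaced by the hypothesis that $\Per(f)\cap\Per(g)$ is Zariski dense in $\Aff^N_K$).
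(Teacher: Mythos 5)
Your setup coincides with the paper's own proof: the paper likewise applies Theorem~\ref{the dynamical Manin-Mumford conjecture} to $F=(f,g)$ on $\Aff^{2N}_K$ and the diagonal $\Delta$, asserting that $\Per(F)\cap\Delta$ is dense in $\Delta$ and that $(\bar f,\bar g)$ is surjective on $\bar\Delta$ (the ``infinite $\Rightarrow$ Zariski dense'' issue for $N>1$ is present in the paper as well; you are the one who flags it). Where you genuinely diverge is the endgame. The paper uses that the theorem's hypothesis holds with $l=1$: since $\bar f=\bar g$, $\bar F$ itself maps $\bar\Delta$ onto $\bar\Delta$, and the theorem's proof produces an $F^{l}$-invariant dense subset of $V$, so the conclusion is $F(\Delta)\subseteq\Delta$, i.e.\ $f(x)=g(x)$ for all $x$, with no further work (this is what the paper means by ``$\Delta$ is periodic over $(f,g)$ which implies $f(x)=g(x)$''). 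You instead read the conclusion only as ``$\Delta$ is periodic of some unspecified period $m$'', extract $f^{m}=g^{m}$, and then prove a rigidity statement ($\bar f=\bar g$ and $f^{m}=g^{m}$ imply $f=g$) via the telescoping/ultrametric argument with $\delta=\|f-g\|$ and dominance of $\bar f$. That argument is correct: the contraction estimate $\|g(y)-g(z)\|\le\|y-z\|\max\{\|y-z\|,|\varpi|\}$ follows from $a^{p}-b^{p}\equiv(a-b)^{p}\bmod p(a-b)$ together with $|p|\le|\varpi|$, exactly the attracting computation the paper uses for injectivity of reduction on periodic points, and the choice of $x_{0}$ with $f^{m-1}(x_{0})$ reducing into the nonvanishing locus makes the ultrametric equality case bite. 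What your route buys is a self-contained rigidity lemma of independent interest, robust even if one only knows invariance of $\Delta$ under some iterate; what it costs is length, since tracking the exponent $l=1$ through the theorem renders the corollary immediate, as in the paper.
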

\begin{proof}[Proof of Corollary~\ref{cor:1}]
Because of the assumption, we have $\Per((f,g))\cap \Delta$ where $\Delta:=\{(x,x)\in\Aff_K^{2N}\mid x\in \Aff_K^N\}$ is dense in $\Delta$. Since $K$ is algebraically closed, its residue field $k$ is also algebraically closed. Therefore, we have $f,g:\Aff_k^N\to \Aff_k^N$ are surjective maps which implies, $(\bar{f},\bar{g}):\bar{\Delta}\to\bar{\Delta}$ is a surjective morphism of the diagonal. Hence, by Theorem~\ref{Inverse Mordelll-Lang conjecture}, $\Delta$ is periodic over $(f,g)$ which implies $f(x)=g(x)$ for all $x\in\Aff_K^N$ which is our conclusion.
\end{proof}

In the rest of this section, we need $|t|=|\varpi|$ and suppose the defining equations of $V$ are $H_j(X_1,\ldots,X_N)=0$, $j=1,\ldots,m$ where $H_j$s are polynomials. Since the set of zeros will not change by multiplying the polynomials with some nonzero constants, we can assume $\|H_j\|=1$ for all $j$. Set
\[
V^{ad}\coloneqq  \{x\in \Aff^{N,ad}_K\mid |H_j(x)|=0, j=1,\ldots, m\},
\]
and we have $R(V^{ad})\coloneqq  V^{ad}\cap R(\Aff^{N,ad}_K)= \tau(V)$.

We need one of Xie's lemmas to prove the main theorem.
\begin{lemma}[Xie]\label{Xie's lemma}
Let $h\in K^\flat[X_1,\ldots,X_N]$ be a polynomial with Gauss norm $1$. Suppose that for every point $x\in \iota(\Per(F^l)\cap V^{ad})$, we have $|h(x)|\leq |t|^s$, where $s\in \Z^+$. Then for every point $y\in \tau^\flat(\eta(\bar{V}))$, we have $|h(y)|\leq |t|^s$. 
\end{lemma}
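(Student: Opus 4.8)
The plan is to push the hypothesis, which lives on the tilted adic space, down to the residue field $k=\overline{\F_p}$, reinterpret it as the statement that $h$ vanishes ``to $t$-adic order $\geq s$'' along $\bar V$, and then bootstrap from Zariski density of periodic points. \emph{Step 1 (descent to the residue field).} Using the relation $\eta\circ\red=(\tau^\flat)^{-1}\circ\iota\circ\tau$ on $\Per(F)$ from the section on passing by tilt (which packages Lemma~\ref{FinalLemmaforChar0}, Lemma~\ref{FinalLemmaforCharp} and Equation~\eqref{Eq:FrhorhoF}), together with $R(V^{ad})=\tau(V)$ and $\Per(F^l)=\Per(F)$, one identifies $\iota(\Per(F^l)\cap V^{ad})$ with $\{\tau^\flat(\eta(\red(\A)))\mid \A\in\Per(F)\cap V\}$. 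Since $|h(\tau^\flat(\eta(\bar\A)))|=|h(\bar\A)|$ computed in $K^\flat$ for $\bar\A\in k^N\subset (K^\flat)^N$, the hypothesis reads: $|h(\bar\A)|\leq|t|^s$ for all $\bar\A\in\red(\Per(F)\cap V)$, and the conclusion is the same inequality for all $\bar\A\in\bar V$ (which is what $\tau^\flat(\eta(\bar V))$ detects). As $\|h\|=1$, the reduction $\bar h:=h\bmod K^{\flat,\circ\circ}$ is a nonzero element of $k[X_1,\ldots,X_N]$.

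\emph{Step 2 (a $t$-adic filtration argument).} Granting that $\red(\Per(F)\cap V)$ is Zariski dense in $\bar V$ (Step 3), I would argue by induction ``in the $t$-direction'': let $q_0$ be the least $t$-adic valuation occurring among the coefficients of $h$; if $q_0\geq s$ we are done, and otherwise $t^{-q_0}h$ has Gauss norm $1$, its reduction $\overline{t^{-q_0}h}\in k[X]$ vanishes on $\red(\Per(F)\cap V)$ (since $|t^{-q_0}h(\bar\A)|\leq|t|^{s-q_0}<1$ there), hence lies in $I(\bar V)$; writing $\overline{t^{-q_0}h}=\sum_j\bar a_j\bar H_j$ and subtracting a lift, $h':=h-t^{q_0}\sum_j a_jH_j$ still satisfies $|h'(\bar\A)|\leq|t|^s$ on $\red(\Per(F)\cap V)$ (the subtracted term vanishes on all of $\bar V$), has Gauss norm $\leq 1$, and has strictly larger least $t$-valuation. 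Iterating and passing to the $t$-adic limit, $h$ becomes congruent modulo $t^sK^{\flat,\circ}[X]+I(\bar V)K^{\flat,\circ}[X]$, which gives $|h(\bar\A)|\leq|t|^s$ for every $\bar\A\in\bar V$; this is slightly fiddly because of the dense value group of $K^\flat$, but is routine.

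\emph{Step 3 (Zariski density of $\red(\Per(F^l)\cap V)$ in $\bar V$).} This is the real content. Since $k=\overline{\F_p}$, both $\bar V$ and $\bar F$ are defined over a finite field, and the standing hypothesis that $\bar F^l\colon\bar V\to\bar V$ is finite surjective makes $\Per(\bar F^l)\cap\bar V$ Zariski dense in $\bar V$, as for any finite surjective self-map over $\overline{\F_p}$. What must be added is that enough of those periodic points are reductions of periodic points of $F$ lying on $V$: here one combines the hypothesis of Theorem~\ref{the dynamical Manin-Mumford conjecture} that $\Per(F)\cap V$ is Zariski dense in $V$ with the fact, from the section on passing by reduction (Lemma~\ref{Lem:redffred} and the attracting property of a restricted lift of $p$-th power), that $\red$ restricts to a bijection $\Per(F)\xrightarrow{\ \sim\ }\Per(\bar F)$ whose fibres are the formal fibres of $\red$, one periodic point each. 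I would suppose $\overline{\red(\Per(F)\cap V)}=\bar W\subsetneq\bar V$, lift $\bar W$ to a closed $K^\circ$-subscheme $\mathcal W$ of the good-reduction model $\mathcal V$ of $V$ with generic fibre $W\subsetneq V$ of dimension $\dim\bar W$, and try to derive $\Per(F)\cap V\subseteq W$, contradicting Zariski density in $V$.

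The main obstacle is exactly the last implication of Step 3 — that a periodic point of $F$ lying on $V$ whose reduction falls into $\bar W$ must in fact lie on the lift $W$. Zariski density is not transported through the reduction map for arbitrary subsets (points reducing into a proper closed subset can still be Zariski dense), so this step genuinely needs the rigidity of the periodic locus of a restricted lift of $p$-th power: each formal fibre carries a unique periodic point, attracting for a suitable power of $F$, and one must show that this ``periodic section'' $\Per(\bar F)\to\Per(F)$ is compatible with the subscheme $\mathcal W$ — which in turn should use that $\bar F^l$ maps $\bar W$ into itself, i.e.\ that $\bar W$ inherits the finite-surjective dynamics. Once Step 3 is in place, Steps 1 and 2 are bookkeeping with reductions and the ultrametric inequality.
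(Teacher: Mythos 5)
Your Steps 1 and 3 pursue the same plan as the paper: reinterpret the hypothesis as ``$|h(\bar\A)|\leq|t|^s$ for $\bar\A\in\red(\Per(F)\cap V)$'', obtain Zariski density of that set in $\bar V$ from Amerik's theorem on periodic points of finite surjective self-maps in positive characteristic (this is Theorem~\ref{thm:Amerik}), and then propagate the inequality from the dense set to all of $\bar V$. The paper carries out the propagation step differently from your Step~2: rather than a coefficient-by-coefficient $t$-adic filtration with repeated subtraction, it passes in one shot to the truncated base $\Spec(K^{\flat,\circ}/(t^s))$, observes that $\iota(\Per(F)\cap V)$ (with coordinates in $k$) base-changes to a Zariski-dense set of $\bar V\times_{\Spec k}\Spec(K^{\flat,\circ}/(t^s))$, and concludes $\bar H\in$ the vanishing ideal there. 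This sidesteps the issue you yourself flag in Step~2: since $|K^\flat|$ is dense in $\R_{>0}$, ``strictly larger least $t$-valuation'' does not give a terminating iteration, and ``routine'' is optimistic. The cleaner fix, and the one the paper in effect uses, is to truncate once at $t^s$ and apply Noetherianity of $k[X_1,\ldots,X_N]$ over the Artinian base.

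Your Step~3 concern is genuine and is precisely the point where the paper's proof is thinnest. Amerik's theorem gives density of $\Per(\bar F^l)\cap\bar V$, not of $\red(\Per(F)\cap V)$; these coincide only if every $\bar y\in\Per(\bar F^l)\cap\bar V$ has its unique periodic lift (via the bijection $\red\colon\Per(F)\xrightarrow{\sim}\Per(\bar F)$) actually lying on $V$. The paper simply writes ``$\red(\Per\cap V)$ dense in $\bar V$ follows from Theorem~\ref{thm:Amerik}'' without addressing this lifting question, so you have identified a step the paper glosses over rather than a defect unique to your argument. Your proposed resolution (lift $\bar W=\overline{\red(\Per(F)\cap V)}$ to a $K^\circ$-model and derive a contradiction with density of $\Per(F)\cap V$ in $V$) is a reasonable direction but is not what the paper does, and, as you note, still needs the compatibility of the ``periodic section'' with the subscheme $\mathcal W$ --- a point that remains open in both your write-up and the paper's. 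In short: same strategy, a technically different and slightly shakier Step~2, and a correctly flagged but unresolved Step~3 that the paper shares.
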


The following theorem in \cite{Amerik2011} imply Xie's Lemma.
\begin{theorem}\label{thm:Amerik}
Let $X$ be an algebraic variety over $k$ and $f:X\to X$ be an finite surjective morphism. Then the subset of $X$ consisting of periodic points of $f$ is Zariski dense in $X$.
\end{theorem}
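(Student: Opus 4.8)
The plan is to reduce to a finite base field and then exploit the fact, special to positive characteristic, that over $k=\overline{\mathbb{F}_p}$ \emph{every} closed point is preperiodic. Since $X$ is of finite type over $k$ and $f$ is given by finitely many regular functions, I would first spread out: there is a finite subfield $\mathbb{F}_q\subseteq k$, a variety $X_0$ over $\mathbb{F}_q$, and a morphism $f_0\colon X_0\to X_0$ whose base change to $k=\overline{\mathbb{F}_q}$ recovers $(X,f)$. Then $X(k)=\bigcup_{n\geq 1}X_0(\mathbb{F}_{q^n})$, each $X_0(\mathbb{F}_{q^n})$ is a finite set, and it is stable under $f_0$ because $f_0$ is defined over $\mathbb{F}_q\subseteq\mathbb{F}_{q^n}$. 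A self-map of a finite set is eventually periodic on every element, so every closed point of $X$ (equivalently, every $k$-point) is preperiodic for $f$; hence the set $\mathrm{PrePer}(f)$ of preperiodic closed points is all of $X(k)$, which is Zariski dense in $X$.

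The second step passes from ``preperiodic points are dense'' to ``periodic points are dense'' using only that $f$ is surjective (only this hypothesis is used here, not finiteness) together with Noetherianity. Write $\mathrm{PrePer}(f)=\bigcup_{a\geq 0}(f^a)^{-1}(\Per(f))$, where $(f^a)^{-1}$ denotes full preimage; this holds because a point is preperiodic exactly when some forward iterate of it is periodic. A periodic point is sent by $f$ to a periodic point, so $\Per(f)\subseteq f^{-1}(\Per(f))$, and applying $(f^a)^{-1}$ shows that the sets $(f^a)^{-1}(\Per(f))$ form an increasing chain; therefore so do their Zariski closures, which are closed subsets of the Noetherian space $X$ and hence stabilize at some index $a_0$. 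Consequently
\[
X=\overline{X(k)}=\overline{\mathrm{PrePer}(f)}=\overline{\textstyle\bigcup_{a\geq 0}(f^a)^{-1}(\Per(f))}=\overline{(f^{a_0})^{-1}(\Per(f))}.
\]

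To finish, I would use continuity of $f^{a_0}$: one has $\overline{(f^{a_0})^{-1}(\Per(f))}\subseteq (f^{a_0})^{-1}\!\bigl(\overline{\Per(f)}\bigr)$, so the displayed equality forces $(f^{a_0})^{-1}(\overline{\Per(f)})=X$, i.e. $f^{a_0}(X)\subseteq\overline{\Per(f)}$. Since $f$, hence $f^{a_0}$, is surjective, $f^{a_0}(X)=X$, so the closed set $\overline{\Per(f)}$ contains every point of $X$ and thus equals $X$, which is the assertion.

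The main things to be careful about are the bookkeeping in the spreading-out step --- that $X(k)=\bigcup_n X_0(\mathbb{F}_{q^n})$ and that each $X_0(\mathbb{F}_{q^n})$ is finite and $f_0$-stable, which is exactly where the positive characteristic of $k$ enters and without which the statement fails --- and keeping straight that ``periodic point'' throughout means periodic \emph{closed} point. I do not expect a serious obstacle beyond this, and in the applications above one may simply quote \cite{Amerik2011} in place of reproducing the argument.
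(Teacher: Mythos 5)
Your first step is fine, and it is exactly where the hypothesis on $k$ enters: since the residue field $k$ of $\C_p$ is $\overline{\F}_p$, the pair $(X,f)$ descends to some $(X_0,f_0)$ over a finite field $\F_q$, one has $X(k)=\bigcup_n X_0(\F_{q^n})$, each $X_0(\F_{q^n})$ is a finite $f_0$-stable set, and hence every closed point of $X$ is preperiodic. (This really does need $k$ to be algebraic over $\F_p$: over $k=\overline{\F_p(t)}$ the finite surjective map $x\mapsto tx$ of $\Aff^1_k$ has $\{0\}$ as its only periodic point, so the statement fails.)

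The gap is in your second step. You claim that the increasing chain of closed sets $\overline{(f^a)^{-1}(\Per(f))}$ stabilizes ``by Noetherianity'', but a Noetherian space satisfies the \emph{descending} chain condition on closed subsets; ascending chains of closed subsets need not stabilize (an increasing sequence of finite sets of closed points of $\Aff^1$ already gives a counterexample), and a priori your chain is of exactly this kind. Nor is this a repairable bookkeeping issue: the implication ``preperiodic closed points are dense and $f$ is surjective (even finite) $\Rightarrow$ periodic closed points are dense'' is not a formal consequence of Noetherian topology. For instance, enumerate the closed points of $\Aff^1_k$ as $x_1,x_2,\ldots$ and let $g$ fix $x_1$ and the generic point and send $x_{n+1}\mapsto x_n$ for all $n$; this is a continuous, closed, finite-to-one surjection of the underlying Noetherian space for which every point is preperiodic while $x_1$ is the only periodic closed point. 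So the algebraic structure must be invoked again in the passage from preperiodic to periodic, and that passage is the real content of the quoted theorem (the paper itself offers no proof, it simply quotes \cite{Amerik2011}). The standard argument runs instead through Frobenius twists: working over $\F_q$, any point $x\in X_0(\F_{q^r})$ satisfying $f_0^m(x)=\mathrm{Frob}_q^{\,n}(x)$ also satisfies $f_0^{mr}(x)=\mathrm{Frob}_q^{\,nr}(x)=x$ and hence is periodic, and one then produces such twisted fixed points in every dense open subset by an intersection-theoretic or point-counting input of Lang--Weil type (Hrushovski's twisted Lang--Weil estimate in the general quasi-projective case, or Fakhruddin's argument). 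Either supply an input of that kind in place of the Noetherian stabilization, or do as the paper does and cite the literature for this statement.
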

\begin{proof}[Proof of Xie's Lemma]
We have a natural map
\[
(K^{\flat,\circ})^N\to (K^{\flat,\circ}/(t^s))^N=\Aff^{N}_{K^\flat/(t^s)}(K^\flat/(t^s))
\]
defined by $(x_1,\ldots,x_N)\mapsto (\bar{x_1},\ldots,\bar{x_N})$ where we denote $\bar{x_i}$ as the $x_i\mod t^s$. We denote $H\mod t^s$ by $\bar{H}$. Then, since $\bar{F}^l:\bar{V}\to\bar{V}$ is a surjective morphism, $red(\Per\cap V)$ dense in $\bar{V}$ follows from Theorem~\ref{thm:Amerik}. Moreover, we have
\[
\overline{\overline{\eta(\bar{V})}}=\overline{\overline{\phi(\Per\cap V)}}=\overline{\overline{\iota(\Per\cap V)}}.
\]
We use the double overline to denote that we take Zarisky closure to the set in its belonging space.

By assumption, we have $|\bar{H}(\bar{x})|=0$ for every point $x\in \iota(\Per\cap V)$. Observe that
\[
\overline{\iota(\Per\cap V)}=\phi(\Per\cap V)\times_{\Spec k}\Spec(K^{\flat,\circ}/(t^s))
\]
is Zarisky dense in $\overline{\overline{\eta(\bar{V})}}\times_{\Spec k}\Spec(K^{\flat,\circ}/(t^s))$. It follows that
\[
\overline{\overline{\eta(\bar{V})}}\times_{\Spec k}\Spec(K^{\flat,\circ}/(t^s))\subseteq\{\bar{H}=0\}
\]
Therefore, we have 
\[
\eta(\bar{V})\times_{\Spec k}\Spec(K^{\flat,\circ}/(t^s))\subseteq\{\bar{H}=0\},
\]
i.e. $H(y)\e 0\mod t^s$ for all $y\in \eta(\bar{V})$, and immediately it implies $|H(\tau^{\flat}(y))|\leq |t|^s$ by the definition of $\tau^{\flat}$.
\end{proof}

Now, we are well prepared for showing our first main theorem.

\begin{proof}[Proof of Theorem]
Let us denote
\[
\mathcal{O}\coloneqq \bigcup_{x\in\Per(F)\cap V}\orbit{F^l}{x}.
\]
Since $F^l$ is bijective on the $\orbit{F^l}{x}$ for a periodic point $x$, $F^l(\mathcal{O})=\mathcal{O}$. We claim $\mathcal{O}\subseteq V$, where $\mathcal{O}$ is dense in. To prove the claim, we need
\[
|H_j(s)|=0\text{ for all }j\text{ and all }s\in\mathcal{O},
\]
or equivalently $|H_j(\varphi^{-1}\circ\chi(s))|=0$.

For any $x\in\Per(F)\cap V$, we define $z\coloneqq  \varphi^{-1}\circ\chi(x)$ and $z^\flat\coloneqq \rho(z)$ . Fixing a $j$, we apply the approximation lemma to $H_j$. For every $c\geq 0$ and $\varepsilon=1/2>0$, there exists a $g_{c}\in R^{\flat,perf}$ such that we have
\begin{equation}\label{inequality1}
|H_j-g^{\sharp}_c(v)|\leq |\varpi|^{1/2}\max\{|H(v)|,|\varpi|^c\}<1\qquad\text{for all }v\in\Aff^{N,perf}_K.
\end{equation}
Then, $\|g_c^\sharp\|\leq 1$ follows $\|H_j\|\leq 1$. The must importantly,  Inequality~\ref{inequality1} becomes
\[
|g_c^\sharp(z)|=|g_c(z^\flat)|\leq |t|^{c+1/2}
\]
because of $|H(z)|=0$.

We can approach $g_c$ by a polynomial $h_c$ in $R^{\flat,n}$ for some $n=lk$ where $k$ is a large enough integer, i.e. $h_c$ satisfies $\|g_c-h_c\|\leq |t|^c$. We can express $h_c$ as a polynomial in terms of $X_1^{(n)},\ldots,X_N^{(n)}$ by replacing $X_i^{(k)}$ in terms of $X_i^{(n)}$ for all $k<n$. Note that the replacement does not change anything, so we still have $|h_c(z^\flat)|\leq |t|^c$. Therefore, we conclude that 
\[
|h_c(s^\flat)|\leq |t|^c\quad\text{for all }s^\flat\in\mathcal{O}^{ad}\coloneqq \bigcup_{ y\in\Per((F^{ad})^l)\cap \tau^{\flat}(\bar{V})}\orbit{F^{ad}}{y}\subseteq \tau^{\flat}(\bar{V})
\]
by Xie's lemma. The construction of the perfectoid spaces naturally implies  $|h_c(s')|=|h_c(s)|\leq |t|^c$
\[
\forall s'\in \varphi^{-1}\{((F^{ad})^{n}(s),(F^{ad})^{n-1}(s),(F^{ad})^{n-2}(s),\ldots, s,s_1,s_2,\ldots)\mid F^{ad}(s_1)=s\text{ and }F^{ad}(s_{n+1})=s_n\quad\forall n>0\}.
\]
In particular, $|h_c(s')|\leq |t|^c$ for all $s'\in(\varphi^{\flat})^{-1}\circ\chi(\mathcal{O}^{ad})$. Thus, we conclude that
\[
|g_c(s')|=|h_c(s')+(g_c(s')-h_c(s'))|\leq |t|^c.
\]

Finally, we have $|g_c^\sharp(\rho^{-1}(s'))|=|g_c(s')|\leq |t|^c$. Let $s''=\rho^{-1}(s')$. Following by Inequality~\ref{inequality1} and the remark after the approximation lemma, we conclude that the inequality
\[
|H_j-g_c^\sharp(s'')|\leq |t|^{1/2}\max\{|g_c^\sharp(s'')|,|t|^c\}=|t|^{c+1/2}.
\]
implies 
\begin{equation}\label{inequality5}
|H_j(s'')|=|g_c^\#(s'')-(H_j-g_c^{\#}(s''))|\leq\max\{|H_j(s'')|,|H_j-g_c^{\#}(s'')|\}\leq\max\{|t|^c,|t|^{c+1/2}\}=|t|^c= |\varpi|^c.
\end{equation}
Note that $\rho^{-1}\circ(\varphi^{\flat})^{-1}\circ\chi^{\flat}:\mathcal{O}^{ad}\to\varphi^{-1}\circ\chi(\mathcal{O})$ is a bijection, so Inequality~\ref{inequality5} is  true for all $s\in\mathcal{O}$. Since $c$ is arbitrary, and also the statement is true for all $j$, we conclude that $\mathcal{O}\subset V$, which is our desire result.
\end{proof}
\subsection{Proof of the Dynamical Tate-Voloch Conjecture}
\begin{reptheorem}{dynamical tate-voluch conjecture}
Let $F$ be a restricted lift of $p$th power. Then, for any variety $V$, there exists an $\varepsilon>0$ such that, for all $x\in \Per(F)$, either $x\in V$ or $d(x,V)>\varepsilon$.
\end{reptheorem}
The following proposition gives an idea about how to do this question.
\begin{proposition}
Given a power series $g\in K^{\flat,\circ}\la X_1,\ldots,X_N\ra$ of the form
\[
g=\sum_{i=0}^\infty g_{i}u^i
\]
where $u\in K^{\flat,\circ}$ has valuation $|u|<1$ and where the $g_i$ are taking from $k\la X_1,\ldots, X_N\ra$. Then, there exists an $\varepsilon$ depended on $g$ such that $g(x)=0$ or $|g(x)|> \varepsilon$ for all $x\in k^N$.
\end{proposition}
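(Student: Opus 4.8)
The plan is to evaluate $g$ at a point $x\in k^N$ and exploit that the residue field $k=\overline{\F_p}$ carries the trivial absolute value: hence $k\langle X_1,\dots,X_N\rangle$ is just the polynomial ring $k[X_1,\dots,X_N]$ (only finitely many coefficients can be nonzero while tending to $0$), and every nonzero element of $k$, viewed inside $K^{\flat,\circ}$, has absolute value $1$. Since evaluation at a point of $(K^{\flat,\circ})^N$ is a contractive ring homomorphism on the Tate algebra $K^{\flat,\circ}\langle X_1,\dots,X_N\rangle$ and the series $\sum_i g_i u^i$ converges there, I get $g(x)=\sum_{i\ge 0}g_i(x)u^i$ with each $g_i(x)\in k$; thus $|g_i(x)u^i|$ is either $0$ or exactly $|u|^i$. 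Because $|u|<1$, the values $|u|^i$ are strictly decreasing, so by the strict ultrametric inequality either $g(x)=0$ (precisely when every $g_i(x)$ vanishes) or $|g(x)|=|u|^{i_0(x)}$, where $i_0(x):=\min\{i:g_i(x)\neq 0\}$. Everything then reduces to bounding $i_0(x)$ uniformly over those $x$ with $g(x)\neq 0$.

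The uniform bound is where Noetherianity enters. I would consider the ascending chain of ideals $J_m:=(g_0,g_1,\dots,g_m)\subseteq k[X_1,\dots,X_N]$; since the polynomial ring is Noetherian, the chain stabilizes at some index $M$, and then a one-line induction gives $g_i\in J_M$ for every $i\ge 0$. Consequently, for $x\in k^N$ the vanishing $g_0(x)=\dots=g_M(x)=0$ already forces $g_i(x)=0$ for all $i$, i.e. $g(x)=0$; equivalently, if $g(x)\neq 0$ then some $g_j(x)\neq 0$ with $j\le M$, so $i_0(x)\le M$ and hence $|g(x)|=|u|^{i_0(x)}\ge |u|^M$. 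Taking $\varepsilon:=|u|^{M+1}$ (or any positive number $<|u|^M$) finishes the argument: for every $x\in k^N$, either $g(x)=0$ or $|g(x)|\ge |u|^M>\varepsilon$.

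I expect the only genuine point to be this Noetherianity step. A priori $i_0(x)$ could grow without bound as $x$ ranges over $k^N$, and it is exactly finite generation of the ideal generated by all the $g_i$ that collapses the infinitely many ``$u$-layers'' of $g$ into the first $M+1$ of them. The remaining ingredients — continuity/contractivity of evaluation on $K^{\flat,\circ}\langle X_1,\dots,X_N\rangle$, the identification $k\langle X\rangle=k[X]$ for trivially valued $k$, the fact that $0\neq a\in k\subseteq K^{\flat,\circ}$ has $|a|=1$, and the strict ultrametric computation $|g(x)|=|u|^{i_0(x)}$ — are routine and I would only state them. (This proposition is the characteristic-$p$ input that, combined with the framework of Section~\ref{section:3} and the approximation lemma, will be transported back to $\C_p$ in the proof of Theorem~\ref{dynamical tate-voluch conjecture}.)
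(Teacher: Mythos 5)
Your argument is the same as the paper's: write $g(x)=\sum_i g_i(x)u^i$, note each $g_i(x)\in k$ has value $0$ or $1$, invoke Noetherianity of $k[X_1,\dots,X_N]$ to get $M$ with $(g_0,g_1,\dots)=(g_0,\dots,g_M)$, and conclude that vanishing of the first $M+1$ layers forces $g(x)=0$. Your choice $\varepsilon=|u|^{M+1}$ is in fact a little more careful than the paper's $\varepsilon=|u|^M$, since the latter only guarantees $|g(x)|\ge |u|^M$ rather than the strict inequality required by the statement.
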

\begin{proof}
We note two things. First, we have either $g_i(x)=0$ or $|g_i(x)|=1$ for any $x\in k^N$. Secondly, the polynomial ring $k[X_1,\ldots,X_N]$ is Noetherian, so there exists an integer $M$ for which $I=\la g_i\ra_{i=0,1,2,\ldots}$ is generated by $g_0,g_1,\ldots, g_M$. Hence, if we have $|g(x)|<|u|^{M}$, then we have $g_i(x)=0$ for $i=0,1,2\ldots, M$; hence, $g_i(x)=0$ for all $i$. The $|u|^M$ is our desired $\varepsilon$.
\end{proof}
This proposition indicates that we should approach the power series $g_c$ in the approximation lemma not only by a polynomial but by a polynomial in the form given in the proposition.
\begin{proof}[Proof of Theorem~\ref{dynamical tate-voluch conjecture}]
Given an polynomial $H\in R^0\subset R^{perf}$ with $\|H\|=1$, for $\varepsilon=1/2$ and any $c>0$, in particular we want $c$ to be an integer, there exists an element $g_c\in R^{\flat,perf}$ such that
\[
|H-g_c^\sharp(x)|\leq |\varpi|^{1/2}\max\{|H(x)|,|\varpi|^c\}<1
\]
for all $x\in\Aff^{perf}_{K}$. It implies $\|g_c\|= 1$, so we can approach $g_c$ by elements in $R^{\flat,n,\circ}$. More precisely, for a large enough $n$, there exists a polynomial $h_c\in R^{\flat,n,\circ}$ such that $\|g_c-h_c\|<|t|^{1/2+c}$. We write $h_c$ as
\[
h_c(X^{(n)}_1,\ldots,X^{(n)}_N)=\sum_{i\coloneqq (i_1,\ldots,i_N)\in I} c_i{X^{(n)}_1}^{i_1}\cdots {X_N^{(n)}}^{i_N}
\]
where for all but finitely many $c_i$ are zero. Since we only have finitely many $c_i\in K^{\flat,\circ}$, we can find a field extension $E=k((u))$ of $k((t))$ for some $|u|=|t|^{1/\deg(u)}$ such that there exist $c'_i\in E^{\circ}$ with $|c_i-c'_i|<|t|^{1/2+c}$. Let $h'_c=\sum_{i\in I}c_i'{X^{(n)}_1}^{i_1}\cdots {X_N^{(n)}}^{i_N}$, and we can rewrite $h'_c$ as the form
\[
h'_c(X^{(n)}_1,\ldots,X^{(n)}_N)=\sum_{i=0}^\infty f_{c,i}(X^{(n)}_1,\ldots,X^{(n)}_N) u^i\qquad \text{for some }f_{c,i}\in k[X^{(n)}_1,\ldots,X^{(n)}_N].
\]
We then choose $h^0_c=\sum_{i=0}^{M_c}f_{c,i} u^i$ where $M_c$ is the largest integer having $|u^i|>|t|^c$. This means $\|h_c-h^0_c\|\leq |t|^c$.

We let $I=\la f_{c,i}\mid \forall c,i\ra$ which is an ideal of the Noetherian ring $k[X_1^{(n)},\ldots,X_N^{(n)}]$, so there exists an integer $M$ for $I=\la f_{c,i}\mid c\leq M,\ \forall i\ra$. Now, we need a lemma.
\begin{lemma}\label{thelemma}
We fix a valuation $x\in \Per(F^{perf})$. Then, $|H(x)|=0$ if and only if $f_{c,i}(x^\flat)=0$ for all $c\leq M$ and for all $i$.
\end{lemma}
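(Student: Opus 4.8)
The plan is to prove the two implications separately, in each case relaying the vanishing statement across the tilt in two hops: between $H$ and the approximants $g_c$ I will use the approximation lemma exactly as stated (so that $|g_c^\sharp(x)|=|g_c(x^\flat)|$, with $x^\flat=\rho(x)$), and between the $g_c$ and the coefficient polynomials $f_{c,i}$ I will use the ultrametric triangle inequality together with one structural fact. That fact is: by the reduction/tilt dictionary of Section~\ref{section:3} --- concretely the identity $\phi=\eta\circ\red=(\tau^\flat)^{-1}\circ\iota\circ\tau$ on $\Per(F)$, with $\eta:\Aff^N_k\hookrightarrow\Aff^N_{K^\flat}$ the natural embedding --- every coordinate $X_i^{(m)}(x^\flat)$ of the periodic point $x^\flat$ lies in the residue field $k$. (Equivalently: the tilted dynamics is governed by $\bar F$, which by Lemma~\ref{keylemma} and Lemma~\ref{Lem:redffred} is a finite morphism whose coordinate functions are univariate polynomials of degree a positive power of $p$, so the fixed-point scheme of each iterate is zero-dimensional over the algebraically closed field $k$.) Consequently, for any $q\in k[X_1^{(n)},\dots,X_N^{(n)}]$ the value $q(x^\flat)$ lies in $k$, hence is either $0$ or a unit; and since $u^0,u^1,\dots,u^{M_c}$ have pairwise distinct absolute values there is no cancellation in $h_c^0(x^\flat)=\sum_{i=0}^{M_c}f_{c,i}(x^\flat)u^i$, i.e.
\[
|h_c^0(x^\flat)|=\max\{\,|u|^i \;:\; 0\le i\le M_c,\ f_{c,i}(x^\flat)\ne 0\,\},
\]
the maximum of the empty set being $0$.

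For the forward implication, assume $|H(x)|=0$. The approximation lemma (with $\varepsilon=1/2$) gives $|H(x)-g_c^\sharp(x)|\le|\varpi|^{1/2}\max\{|H(x)|,|\varpi|^c\}=|t|^{c+1/2}$, hence $|g_c(x^\flat)|=|g_c^\sharp(x)|\le|t|^{c+1/2}$. Since $x^\flat$ is a rigid point, evaluation there does not increase the Gauss norm, so $\|g_c-h_c^0\|\le|t|^c$ yields $|h_c^0(x^\flat)|\le\max\{|g_c(x^\flat)|,|t|^c\}\le|t|^c$. If some $f_{c,i}(x^\flat)\ne0$ with $i\le M_c$, the displayed identity forces $|h_c^0(x^\flat)|\ge|u|^{M_c}>|t|^c$ by the choice of $M_c$, a contradiction; so $f_{c,i}(x^\flat)=0$ for all $i\le M_c$ and every $c$, in particular for all $c\le M$. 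For the converse, assume $f_{c,i}(x^\flat)=0$ for all $c\le M$ and all $i$. By the choice of $M$ the ideal $I=\langle f_{c,i}:c\le M,\ \forall i\rangle$ contains every $f_{c',i'}$; writing $f_{c',i'}$ as a $k[X]$-linear combination of the generators and evaluating at $x^\flat$ gives $f_{c',i'}(x^\flat)=0$ for all $c',i'$, hence $h_{c'}^0(x^\flat)=0$, hence $|g_{c'}^\sharp(x)|=|g_{c'}(x^\flat)|\le\|g_{c'}-h_{c'}^0\|\le|t|^{c'}$. Feeding this into $|H(x)-g_{c'}^\sharp(x)|\le|t|^{1/2}\max\{|H(x)|,|t|^{c'}\}$: were $|H(x)|>|t|^{c'}$, the right-hand side would be $|t|^{1/2}|H(x)|<|H(x)|$, forcing $|g_{c'}^\sharp(x)|=|H(x)|>|t|^{c'}$, a contradiction; so $|H(x)|\le|t|^{c'}$ for all positive integers $c'$, i.e.\ $|H(x)|=0$.

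The hard part is the structural input of the first paragraph: one must be certain that a periodic point of the tilted perfectoid system has coordinates \emph{in} $k$, not merely reducing into $k$, since this is exactly what makes each $f_{c,i}(x^\flat)$ a scalar that is $0$ or a unit and so legitimizes the no-cancellation identity. This rests on $\bar F$ being finite with one-variable coordinate polynomials of degree $\ge p$, so that the solution set of each equation $(\,\cdot\,)^{\circ m}=\mathrm{id}$ is finite over the algebraically closed $k$; everything else is routine bookkeeping of the successive approximations $g_c\to h_c\to h_c'\to h_c^0$ (all within $|t|^c$) together with careful use of the approximation-lemma inequality.
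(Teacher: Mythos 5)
Your proof follows essentially the same argument as the paper's: pass $|H(x)|=0$ through the approximation lemma to control $|g_c(x^\flat)|$, observe that the tilted periodic point $x^\flat$ has coordinates in $k$ (via $\phi=\eta\circ\red=(\tau^\flat)^{-1}\circ\iota\circ\tau$) so each $f_{c,i}(x^\flat)$ is $0$ or a unit and there is no cancellation in $h_c^0(x^\flat)$, and run the converse through the Noetherian generation of $I$ by the $f_{c,i}$ with $c\le M$. Your write-up is a bit more explicit than the paper's at the no-cancellation step and in the contradiction form of the ultrametric estimate, but the decomposition, the key structural fact, and the two-directional argument are the same.
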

\begin{proof}[Proof of Lemma~\ref{thelemma}]
We first prove from the left to the right. Because of $|H(x)|=0$, for any $c$,
\[
|g_c(x^\flat)|=|g_c^\sharp(x)|\leq |t|^{1/2+c}.
\]
Then, we have
\[
|h_c(x^\flat)|=|g_c(x^\flat)-(g_c(x^\flat)-h_c(x^\flat))|\leq |t|^{1/2+c},	
\]
and use the same argument again to give
\begin{equation}\label{inequality6}
|h_c^0(x^\flat)|=|h_c(x^\flat)-(h_c(x^\flat)-h_c^0(x^\flat))|\leq\max\{|h_c(x^\flat)|,\|h_c-h_c^0\|\}\leq |t|^c.
\end{equation}
Since $x^{\flat}\in \Per(F^{\flat,perf})$ is correspondent to the point $(x_0^\flat,x_1^\flat,x_2^\flat,\ldots)\in\varprojlim_{F^{\flat,ad}}\Aff^{N,ad}_{K^\flat}$ where each $x_i^\flat$ comes from the natural embedding $k^N\xhookrightarrow{}\Aff^N_{K^\flat}\xhookrightarrow{\tau^\flat}\Aff^{N,ad}_{K^\flat}$, we have $|f_{c,i}(x^\flat)|=|f_{c,i}(x_n^\flat)|=1$ or $0$. Inequality~\ref{inequality6} forces $|f_{c,i}(x_n^\flat)|=0$ for all $i=0,1,\ldots, M_c$. Since $c$ is arbitrary, we prove one direction.

Conversely, if $f_{c,i}(x^\flat)=0$ for all $c\leq M$ and $i$, then $f_{c,i}(x^\flat)=0$ for all $c$ and $i$. It then implies, for arbitrary $c$,
\[
|h_c(x^\flat)|=|h_c(x^\flat)-h_c^0(x^\flat)|\leq |t|^c;
\]
therefore, $|g_c^\sharp(x)|=|g_c(x^\flat)|\leq |t|^c$. So, we have
\begin{align*}
|H(x)|=&|g_c^\sharp(x)-(g_c^\sharp(x)-H(x))|\\
&\leq\max\{|g_c^\sharp(x)|,|g_c^\sharp(x)-H(x)|\}\\
&\leq\max\{|\varpi|^c,|\varpi|^{1/2}\max\{|g_c^\sharp(x)|,|\varpi|^c\}\}\\
&\leq|\varpi|^c.
\end{align*}
Since $c$ is arbitrary, $|H(x)|=0$.
\end{proof}

The rest are pretty straightforward. If we have $x\in \Per(F)$ and $H(x)=0$, then we are done. Otherwise, redefine $x$ as $\varphi^{-1}\circ\chi(x)$, and we have $|H(x)|=|H(\pi\circ\varphi(x))|\neq 0$. By Lemma~\ref{thelemma}, $|f_{c_0,i}(x^\flat)|=1$ for some $c_0<M$ and for some $i$, i.e. $|h_c^0(x^\flat)|>|t|^c$ which implies $|h_c(x^\flat)|>|t|^c$, and also $|g_c^\sharp(x)|=|g_c(x^\flat)|>|t|^c$. Since $\max\{|H(x)|,|\varpi|^c\}=\max\{|g_c^\sharp(x)|,|\varpi|^c\}$ and $|g_c^\sharp(x)|>|\varpi|^c$, we conclude that $|H(\pi\circ\varphi(x))|=|H(x)|=|g_c^\sharp(x)|> |\varpi|^c\geq |\varpi|^M=:\varepsilon$.
\end{proof}
\subsection{Proof of inverse dynamical Manin-Munford Conjecture}
For the inverse dynamical Manin-Munford conjecture, I need to assume a strong condition which I will explain later. 
\begin{definition}
For any given coherence backward orbit $\{a_i\}\subset \Aff_K^{N}\simeq \Aff_K^{N,ad}(K)$, we then can associate $\{a_i\}$ with a map $\chi_{\{a_i\}}:\Aff^{N,ad}_K\cup \{\infty\}\to\inverselimit$ by
\[
\chi_{\{a_i\}}:a_n\mapsto (a_n,a_{n+1},a_{n+2},\ldots)
\]
and define $b_i\coloneqq \pi^{\flat}\circ\varphi^{\flat}\circ\rho\circ\varphi^{-1}\circ\chi_{\{a_i\}}(a_i)\in \Aff_{K^\flat}^{N,ad}(K^\flat)\simeq \Aff_{K^\flat}^N$. 
\end{definition}
Note that if we further assume that $b_0$ is algebraic, i.e. $b_0$ belongs to $\Aff_{K^\flat}^{N,ad}(k)$, then $b_i$ is algebraic for all $i$. This observation motivates a nontrivial assumption given in Theorem~\ref{Inverse Mordelll-Lang conjecture}.
\begin{reptheorem}{Inverse Mordelll-Lang conjecture}
Let $\{a_n\}$ be a coherent backward orbit of $a_0$ by a restricted lift of $p$th power $F$. Assume that there exists a subsequence $\{a_{n_i}\}$ contained in an irreducible subveriety $V$ with $\bar{F}^{l}(\bar{V})=\bar{V}$ for some integer $l$. We further assume that $b_0$ is algebraic. Then, we have $\{n_i\}$ is arithmetic progressive.
\end{reptheorem}
\begin{proof}
If $\{b_{n_i}\}$ is a finite set, then it implies the backward orbit is periodic. Therefore, without lose of generality, we can assume $\{b_{n_i}\}$ is an infinite set. Let $Z$ be the Zarisky closure of $\{b_{n_i}\}\subset \Aff_{K^{\flat}}^{N,ad}(K^\flat)$, and we should note that $Z\subseteq \tau^{\flat}\circ\eta(\bar{V})$. $Z=\tau^{\flat}\circ\eta(\bar{V})$ follows from the assumption that $V$ is irreducible. For convenience, we denote $(F^{\flat,ad})^i(b_{0})$ by $b_{-i}$. 

We define $L_t\coloneqq \{n_i\mid n_i\e t\mod l\}$ which partitions the index set $\{n_i\}$ into disjoint subsets. Since $\{n_i\}$ is infinite, one of the $L_t$ is infinite. Fix the $t$, and we want to show that $a_{i}\in V$ for all $i$ satisfying $i\e t\mod l$. At first, because of $(F^{\flat,ad})^l(Z)=Z$, we immediately have $b_{t+jl}\in Z$ for all $j$. 

The variety $V$ is defined by polynomials $H_j(X_1,\ldots,X_N)\in K[X_1,\ldots,X_N]$ for $j=1,\ldots, m$ with $\|H_j\|=1$ for all $j$. Fixing a $j$ and for any $c>0$, there exists $g_c\in R^{\flat,perf}$ such that we have
\[
|H_j-g_c^\sharp(v)|\leq|\varpi|^{1/2}\max\{|H(v)|,|\varpi|^c\}<1\quad\mbox{for all }v\in \Aff_K^{N,perf}.
\]
Following the proof of Theorem~\ref{Inverse Mordelll-Lang conjecture} exactly, we can approach $g_c$ by a polynomial $h_c$ in $R^{\flat, n}$ for some large enough $n=lk$ and have $|h_c(b_{n_i})|\leq |t|^c$ for all $i$. Since $\bar{V}\subseteq \Aff^N_k$ is the Zarisky closure of $\{b_{n_i}\}$ over $k$, we have $\{b_{n_i}\}\times_{\Spec k}\Spec(K^{\flat,\circ}/(t^s))$ is dense in $\eta(\bar{V})\times_{\Spec k}\Spec(K^{\flat,\circ}/(t^s))$ for any $s>0$. Therefore, we have $|h_c(b_{t+jl})|\leq |t|^c$ for all $j$. Considering $h_c\in R^{\flat,perf}$, for the sake of the structure of our perfectoid space, we have $|h_c(b'_{t+jl})|\leq |t|^c$ where $b'_{t+jl}$ is
\[
\varphi^{\flat}((b_{t+(j-k)l},b_{t+(j-k)l+1},\ldots)).
\]
In particular, we have $|g_c(\rho\circ\varphi^{-1}\circ\chi_{\{a_i\}}\circ\iota(a_{t+jl}))|\leq |t|^c$ which implies
\[
|H_j-g_c^\sharp(\varphi^{-1}\circ\chi_{\{a_i\}}\circ\iota(a_{t+jl}))|\leq |\varpi|^{1/2+c}.
\]
Since $c$ is arbitrary, we can conclude that $a_{t+jl}\in V$ for all $j$. Since $\{a_{t+jl}\}$ is an infinite set dense in $V$, we conclude that $V$ is periodic.
\end{proof}
For the rest of this section, we are going to give a new argument to show that the condition of backward orbit is as strong as the condition of coherent backward orbit in some cases. 

Let $F=(f_1,\ldots, f_N)$ and $a=(x_1,\ldots, x_n)\in \bar{\Q}^N$. We want $\Gal(f_i^n=x_i)\cap\Gal(f_j^n=x_j)$ as small as possible for all $i\neq j$. Then, we can move the preimage of $a$ to where we want, especially we want to move them such that they form a coherent backward orbit.
\begin{definition}
Let $f:\bar{\Q}\to\bar{\Q}$ be a polynomial. We say $f$ is stable if $f^n$ is irreducible for all $n$, and $f$ is eventually stable if the number of irreducible factors of $f^n$ is bounded as $n$ tends to infinity.
\end{definition}
We can replace this definition by using the "Galois orbits". Let $x\in \bar{K}^N$, and we call the set
\[
\{\sigma(x)=(\sigma(x_1),\ldots,\sigma(x_N))\mid \sigma\in\Gal(K^{\text{sep}}/K)\}
\]
the \textbf{Galois orbit} of $x$ for the Galois group $\Gal(K^{\text{sep}}/K)$ where $K^{\text{sep}}$ is the separable closure of $K$. Clearly, this is an equivalent relation, and then gives a partition of $F^{-n}(x)$, the $n$th preimage of $x$ for the function $F$. Obviously, the Galois orbits gives an equivalent relation $\sim$ on the set $F^{-n}(x)$, so $\#F^{-n}(x)/\sim$ is the number of Galois orbits.
\begin{definition}\label{eventuallystable}
We say $F$ is stable at $x$ if the number of Galois orbits of $F^{-n}(x)$ is $1$ for all $n\in \N$, and say $F$ is eventually stable at $x$ if the number of Galois orbits of $F^{-n}(x)$ is bounded for $n\in \N$.
\end{definition}
We find an argument which shows that the coherent condition is not really stronger if the function $F$ is assumed to be eventually stable.
\begin{theorem}\label{finialthm}
Let $F=(f_1,\ldots,f_N):K^N\to K^N$ where $f_i\in K[X_1,\ldots X_N]$ be eventually stable, and $V$ be an variety defined over a finite extension of $K$ where the separable algebraic closure $K^{\text{sep}}$ of $K$ is not finite over $K$ . If 
we have a backward orbit of $x_0$ that has infinite intersection with the variety $V$, then we can find a coherent backward orbit $\{a_i\}$ such that $\{a_i\}\cap V$ is infinite.
\end{theorem}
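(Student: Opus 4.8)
The plan is to pass from the given (non-coherent) backward orbit to a coherent one by working in the tree of Galois orbits of the iterated fibres $F^{-n}(x_0)$, where eventual stability forces the tree to have uniformly bounded width; a König-type argument then extracts a branch meeting $V$ infinitely often, and this branch of orbits is finally lifted to an honest branch of points using that $F$ is equivariant for the relevant Galois action.

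Fix a finite extension $L/K$ over which $V$ is defined and put $G=\Gal(K^{\mathrm{sep}}/L)$. First I would record that each fibre $F^{-n}(x_0)$ is a finite $G$-set, that $F$ (being defined over $K\subseteq L$) carries $G$-orbits to $G$-orbits, and that, since $F$ is eventually stable at $x_0$ and $[L:K]<\infty$, the number of $G$-orbits in $F^{-n}(x_0)$ is bounded by some $B$ independent of $n$. This produces a rooted tree $\mathcal T$ (root the $G$-orbit $\{x_0\}$, vertices at level $n$ the $G$-orbits of $F^{-n}(x_0)$, edge from $O$ to $F_*(O)$) with at most $B$ vertices per level. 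Because $V$ is $L$-rational, every $G$-orbit is either contained in $V$ or disjoint from it; call the first kind \emph{green}. The hypothesis that some backward orbit $\{b_i\}$ has $b_i\in V$ for all $i$ in an infinite set $I$ says exactly that, for $i\in I$, the orbit of $b_i$ is green and sits at the top of a root-path of $\mathcal T$ of length $i$; hence $\mathcal T$ has root-paths terminating in green vertices at arbitrarily large levels.

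The next step, and the heart of the argument, is to upgrade this to a \emph{single} infinite branch $\mathcal O_0=\{x_0\},\mathcal O_1,\mathcal O_2,\dots$ of $\mathcal T$ that passes through green vertices for infinitely many indices. I would combine König's lemma (legitimate since the width is $\le B$) with a pigeonhole at each level to keep the branch aligned with infinitely many of the green-terminating root-paths, and here one must genuinely use eventual stability — to control how the green locus can be spread among the boundedly many orbits at each level — together with $[K^{\mathrm{sep}}:K]=\infty$, which guarantees that the Galois action on the fibres is nontrivial enough that a green vertex is not forced to be a leaf and that representatives can be \emph{moved} (this is the mechanism foreshadowed in the discussion of $\Gal(f_i^n=x_i)$ above). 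Granting such a branch, the final step is formal: since along the branch $F(\mathcal O_{j+1})=\mathcal O_j$ as subsets of $\overline{K}^N$, the map $F\colon\mathcal O_{j+1}\to\mathcal O_j$ is surjective, so one sets $a_0=x_0$ and inductively chooses $a_{j+1}\in\mathcal O_{j+1}$ with $F(a_{j+1})=a_j$; the sequence $\{a_j\}$ is a coherent backward orbit of $x_0$ with $a_j\in\mathcal O_j\subseteq V$ whenever $\mathcal O_j$ is green, so $\{a_j\}\cap V$ is infinite.

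The main obstacle is precisely the middle step. Passing to a branch via König's lemma is easy, but a priori that branch could always run just \emph{below} the green vertices: because $F$ need not map $V$ into $V$, the forward images of a green vertex can immediately leave the green locus, and nothing purely formal prevents a green vertex from being a dead end of $\mathcal T$. Overcoming this is where the two hypotheses that look decorative — eventual stability and the infinitude of $K^{\mathrm{sep}}/K$ — have to do the real work; everything else in the argument is bookkeeping with the tree $\mathcal T$ and with Galois-equivariance of $F$.
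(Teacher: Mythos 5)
Your setup --- the rooted tree $\mathcal T$ of Galois orbits of the fibres $F^{-n}(x_0)$, with uniformly bounded width coming from eventual stability, and the observation that $L$-rationality of $V$ forces each orbit to lie entirely inside or outside $V$ --- is exactly the structure the paper works with, even though the paper phrases it as an ad hoc induction rather than explicitly as a tree. Your final lifting step is also correct: since $F$ is $\Gal(K^{\mathrm{sep}}/L)$-equivariant, $F$ carries each orbit on the chosen branch \emph{onto} its predecessor (a nonempty Galois-stable subset of a single orbit is the whole orbit), so an inverse limit of nonempty finite sets along surjections produces the coherent sequence $\{a_j\}$.

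The gap is exactly where you say it is, and it is a real one as stated: in a general rooted tree of bounded width, green vertices at arbitrarily large levels do \emph{not} force some infinite branch to be green infinitely often. (Take a single white spine $A_0\to A_1\to\cdots$ and attach a green leaf to each $A_n$; every infinite branch is all white.) But the extra input you reach for --- pressing harder on eventual stability and on $[K^{\mathrm{sep}}:K]=\infty$ --- is not where the resolution lies; those two hypotheses are already spent in making $\mathcal T$ have bounded width (over a separably closed $K$ the orbits are singletons and the width is unbounded). What rules out the green-dead-end counterexample is the \emph{finiteness} of $F$, a standing hypothesis of the governing conjecture even though the theorem does not restate it: a finite endomorphism of $\Aff^N$ is surjective, so every point of $F^{-n}(x_0)$ has an $F$-preimage and $\mathcal T$ is leafless. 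A leafless tree of width $\le B$ has non-decreasing width (distinct vertices have disjoint nonempty sets of children), hence constant width $W\le B$ above some level $N$; above level $N$ every vertex has exactly one child, so $\mathcal T$ is there a disjoint union of $W$ rays. Greens occur at arbitrarily large levels, so by pigeonhole one ray carries infinitely many of them; extending that ray down to the root gives the branch, and your lifting step finishes the proof. This is, modulo tidying, what the paper's inductive construction is groping toward --- its write-up of why the induction cannot stall is no more explicit than yours --- but the clean reason is the leafless-plus-bounded-width-implies-eventually-rays observation above, driven by finiteness of $F$ rather than by the hypotheses you singled out.
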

\begin{proof}
Let $L$ be the defining field of $V$ which is finite over $K$, so the $\Gal(K^\text{sep}/L)$, a subgroup of $\Gal(K^\text{sep}/K)$ is infinite. Let $I\coloneqq\{n\in\N\mid F^{-n}(x_0)\cap V\neq \emptyset\}$. Since $F$ is eventually stable, for any $n$, $F^{-n}(x_0)$ has at most $M$ many Galois orbits for the Galois group $\Gal(K^{\text{sep}}/L)$. Note that once we have $x\in V$, then the whole Galois orbit of $x$ lives on $V$, i.e. $V\cap\{\sigma(x)\mid\sigma\in\Gal(K^\text{sep}/L)\}=\{\sigma(x)\mid\sigma\in\Gal(K^\text{sep}/L)\}$ or $\emptyset$.

We will construct the coherent orbit $\{a_i\}$ by induction. For $i=0$, we can simply let $a_i=x_0$ and $n_0=0$.

For $i\geq 1$, since we have a backward orbit that passes through the variety $V$ infinitely many times, there exists $n_{i}>n_{i-1}$ with $F^{-n_{i}}(x_0)\cap V\neq\emptyset$. Find $a_i\in F^{-n_{i}}(x_0)$ which satisfies $F^{-n}(a_i)\cap V\neq\emptyset$ for infinitely many $n$ and $F^{n_{i+1}-n_i}(a_i)=a_{i-1}$. If we cannot find such $a_i$ in $F^{-n_i}(x_0)$, we will need to find a larger $n_i$ and repeat the process again. Since the function $F$ is eventually stable and the assumption that there exists a backward orbit of $x_0$ that intercepts the variety $V$ for infinitely many times, we can definitely find $a_i$ by replacing $n_i$ finitely many times. This construction guarantees that the sequence $\{a_i\}$ is a coherent backward orbit of $x_0$. 
\end{proof}
\begin{remark}
We can translate the induction step to the following pseudocode program. Although this algorithm loops endlessly, which means that the algorithm is not effective, it gives a basic idea about why the induction works.
\begin{algorithm}
    \SetKwInOut{Input}{Input}
    \SetKwInOut{Output}{Output}

    \underline{Coherent backward orbit} $(x_0,F,V)$\;
    \Input{An algebraic element $x_0$, a function $F$ and a variety $V$}
    \Output{A coherent orbit of $x_0$}
    $i\coloneqq 0$\;
    $n_0\coloneqq 0$\;
    $t\coloneqq 0$\;
    $L\coloneqq []$\;
    \Do{$i\geq 0$}
      {
      	check$\coloneqq0$\;
        Search $n_{i+1}>n_i$ with $F^{-(n_{i+1}-n_i)}(x_i)\cap V\neq\emptyset$\;
        $S\coloneqq F^{-(n_{i+1}-n_i)}(x_{n_i})\cap V$\;
        \Do{$F^{-n}(x_{n_{i+1}})\cap V=\emptyset$ for infinite but finite $n$}
        {
        	\If{$t>0$}
        	{
           	$S\coloneqq S\setminus\{\sigma(x_{i+1})\mid\sigma\in\Gal(K^{\text{sep}}/L)\}$\;
            \If{$S$ is empty}
            	{
                check$\coloneqq 1$\;
                break\;
                }
        	}
         	Randomly choose $x_{n_{i+1}}\in S$\;
            $t\coloneqq t+1$\;
		}
      \If{check$==1$}{$t\coloneqq 0$\;break\;}
      Add the element $x_{n_{i+1}}$ to the list $L$\;
      $t\coloneqq 0$\;
      $i\coloneqq i+1$\;
      }
      {
        return $L$\;
      }
    \caption{An "algorithm" for finding a coherent backward orbit of $x_0$ for $F$}
\end{algorithm}

The assumption that there exists a backward orbit of $x_0$ passing through the variety $V$ infinitely many times make sure the 8th line can be executed, and the 11st line to the 17th line actually will only execute finite many times because eventually $F$ will be stable at $x_{i+1}$.
\end{remark}

\begin{remark}
In \cite{GTZ2011}, authors gives a counterexample, but the polynomial in their paper is actually not eventually stable.\end{remark}

\bibliographystyle{amsalpha}
\bibliography{Perfected}

\end{document}